\newtheorem{definition}{Definition}[section]
\newtheorem{theorem}[definition]{Theorem}
\newtheorem{lemma}[definition]{Lemma}
\newtheorem{corollary}[definition]{Corollary}
\newtheorem{example}[definition]{Example}
\newtheorem{problem}[definition]{Problem}
\newtheorem{proposition}[definition]{Proposition}
\begin{document}

\title{\bf
Twisting finite-dimensional modules 
\\
 for the $q$-Onsager algebra $\mathcal O_q$ via the
 \\ Lusztig automorphism
}
\author{
Paul Terwilliger}
\date{}

\maketitle
\begin{abstract}
The $q$-Onsager algebra $\mathcal O_q$ is defined by two generators $A$, $A^*$ and two relations, called the
$q$-Dolan/Grady relations. Recently P. Baseilhac and S. Kolb found an automorphism $L$ of $\mathcal O_q$,
that fixes $A$ and sends $A^*$ to a linear combination of $A^*$, $A^2A^*$, $AA^*A$, $A^*A^2$.
Let $V$ denote an irreducible $\mathcal O_q$-module of finite dimension at least two, on which each of $A$, $A^*$ 
is diagonalizable. It is known that $A$, $A^*$ act on $V$ as a tridiagonal pair of $q$-Racah type, giving access to four familiar 
 elements $K$, $B$, $K^\downarrow$, $B^\downarrow$ in ${\rm End}(V)$
that are used to compare the eigenspace decompositions for $A$, $A^*$ on $V$.
 We display  an invertible $H \in {\rm End}(V)$ such that $L(X)=H^{-1} X H$ on $V$
for all $X \in \mathcal O_q$. We describe what happens when one of $K$, $B$, $K^\downarrow$,
$B^\downarrow$ is conjugated by $H$. For example $H^{-1}KH=a^{-1}A-a^{-2}K^{-1}$ where  $a$
is a certain scalar that is used to describe the eigenvalues of $A$ on $V$. We use the conjugation results to compare 
the eigenspace decompositions for  $A$, $A^*$, $L^{\pm 1}(A^*)$ on $V$. In this 
comparison we use the notion of an equitable triple; this is a 3-tuple of elements in ${\rm End}(V)$ such that any two
satisfy a $q$-Weyl relation. Our comparison involves eight equitable triples. One of them is $a A - a^2 K$, $M^{-1}$, $K$ where
$M= (a K-a^{-1} B)(a-a^{-1})^{-1}$. The map $M$ appears in earlier work of S. Bockting-Conrad concerning the
double lowering operator $\psi$ of a tridiagonal pair.
\bigskip

\noindent
{\bf Keywords}. $q$-Onsager algebra, 
 Lusztig automorphism, tridiagonal pair, equitable triple.
\hfil\break
\noindent {\bf 2020 Mathematics Subject Classification}.
Primary: 17B37;
Secondary: 15A21.
 \end{abstract}

\section{Introduction}
In this paper we consider a certain kind of  module for the $q$-Onsager algebra $\mathcal O_q$.
Before getting into detail, we briefly set some notation.
Recall the natural numbers $\mathbb N = \lbrace 0,1,2,\ldots\rbrace$ and integers
$\mathbb Z = \lbrace 0, \pm 1, \pm 2,\ldots \rbrace$.
Let $\mathbb F$ denote a field. Every vector space discussed in this paper is over $\mathbb F$.
 Every algebra discussed in this paper is associative, over $\mathbb F$, and has a multiplicative identity.
Fix a nonzero $q \in \mathbb F$ that is not a root of unity. Recall the notation
\begin{align*}
\lbrack n \rbrack_q = \frac{q^n-q^{-n}}{q-q^{-1}} \qquad \qquad n \in \mathbb Z.
\end{align*}
For elements $X$, $Y$ in any algebra, their
commutator and $q$-commutator are given by
\begin{align*}
\lbrack X, Y\rbrack = XY-YX, \qquad \qquad
\lbrack X, Y\rbrack_q = q XY - q^{-1} YX.
\end{align*}
Note that
\begin{align*}
\lbrack X, \lbrack X, \lbrack X, Y\rbrack_q \rbrack_{q^{-1}} \rbrack =  X^3 Y - \lbrack 3 \rbrack_q X^2 Y X + \lbrack 3 \rbrack_q X Y X^2 - Y X^3.
\end{align*}

\begin{definition}
\label{def:qOns}  \rm
(See \cite[Section~2]{basDeform},
  \cite[Definition~3.9]{qSerre}.)
The {\it $q$-Onsager algebra} ${\mathcal O}_q$ is defined by generators $A$, $A^*$ and relations
\begin{align}
\label{eq:DG1}
&
\lbrack A, \lbrack A, \lbrack A, A^*\rbrack_q \rbrack_{q^{-1}}\rbrack = (q^2-q^{-2})^2 \lbrack A^*, A\rbrack,
\\
\label{eq:DG2}
& 
\lbrack A^*, \lbrack A^*, \lbrack A^*, A \rbrack_q \rbrack_{q^{-1}} \rbrack = (q^2-q^{-2})^2 \lbrack A, A^*\rbrack.
\end{align}
The relations {\rm (\ref{eq:DG1})}, {\rm (\ref{eq:DG2})} are called the {\it $q$-Dolan/Grady relations}.
\end{definition}

\noindent The algebra $\mathcal O_q$ comes from algebraic graph theory,  or more precisely, the theory of $Q$-polynomial distance-regular graphs
 \cite{bannaiIto},
\cite{BCN}.
 For such a graph, the  adjacency matrix and any dual adjacency matrix satisfy
 two relations   \cite[Lemma~5.4]{tersub3} called the tridiagonal relations \cite[Definition~3.9]{qSerre}.
 If the graph has $q$-Racah type \cite[Section~1]{ItoTerwilliger1} then these tridiagonal relations become
  the $q$-Dolan/Grady relations after an appropriate normalization. For an overview of this topic see \cite{augmented}.
\medskip

\noindent The algebra $\mathcal O_q$ comes up in the theory of tridiagonal pairs. Roughly speaking, a  tridiagonal pair
is a pair of diagonalizable linear maps on a nonzero finite-dimensional vector space, that each act
in a block-tridiagonal fashion on 
the eigenspaces of the other one \cite[Definition~1.1]{TD00}. A finite-dimensional
irreducible $\mathcal O_q$-module on which the generators are diagonalizable is essentially the same thing
as a tridiagonal pair of $q$-Racah type  \cite[Theorem~3.10]{qSerre}. For more information on $\mathcal O_q$ and tridiagonal pairs, see
\cite{ItoTD, INT, tdqrac, augmented,  qSerre,
2lintrans,
madrid,
univTer,
pospart,
terLusztig,
qOnsUniv,
vidter}.
\medskip

\noindent The algebra $\mathcal O_q$ has applications
to quantum integrable models
\cite{bas2,
basDeform,
 bas8,
basXXZ,
BK05,
bas4,
  basKoi},
reflection equation algebras
\cite{basBel,basnc},
and coideal subalgebras
\cite{bc,kolb, kolb1}.
There is an algebra homomorphism from $\mathcal O_q$ into
the algebra $\square_q$ 
\cite[Proposition~5.6]{pospart},
and the universal Askey-Wilson algebra
\cite[Sections~9,10]{univTer}.
\medskip

\noindent We will be discussing automorphisms of $\mathcal O_q$. By an {\it automorphism} of $\mathcal O_q$
we mean an algebra isomorphism $\mathcal O_q \to \mathcal O_q$.  In
 \cite{BK} P. Baseilhac and S. Kolb
introduced the following automorphism of $\mathcal O_q$.
\begin{lemma} 
\label{lem:Lusztig} 
{\rm (See \cite[Section~2]{BK}.)}
There exists an automorphism $L$ of $\mathcal O_q$ such that
\begin{align}
L(A)=A, \qquad \qquad
L(A^*)=A^* + \frac{\lbrack A, \lbrack A, A^*\rbrack_q \rbrack }{(q-q^{-1})(q^2-q^{-2})}.
\label{eq:Lusztig}
\end{align}
The inverse automorphism $L^{-1}$ satisfies
\begin{align}
\label{eq:Linv}
L^{-1}(A)= A, \qquad \qquad
L^{-1}(A^*)= A^* + \frac{\lbrack A, \lbrack A, A^*\rbrack_{q^{-1}} \rbrack }{(q-q^{-1})(q^2-q^{-2})}.
\end{align}
\end{lemma}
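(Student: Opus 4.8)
The plan is to verify directly that each of the assignments (\ref{eq:Lusztig}) and (\ref{eq:Linv}) extends to an algebra endomorphism of $\mathcal O_q$, and then that the two composites fix the generators $A$, $A^*$. Throughout write $\delta=(q-q^{-1})(q^2-q^{-2})$, $Z=\lbrack A,\lbrack A,A^*\rbrack_q\rbrack$ and $W=\lbrack A,\lbrack A,A^*\rbrack_{q^{-1}}\rbrack$, so that (\ref{eq:Lusztig}) reads $L(A)=A$, $L(A^*)=A^*+\delta^{-1}Z$ and (\ref{eq:Linv}) reads $L^{-1}(A)=A$, $L^{-1}(A^*)=A^*+\delta^{-1}W$. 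By the presentation in Definition~\ref{def:qOns}, extending $L$ to an endomorphism amounts to checking that the pair $A$, $A^*+\delta^{-1}Z$ satisfies the $q$-Dolan/Grady relations (\ref{eq:DG1}) and (\ref{eq:DG2}).

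The relation (\ref{eq:DG1}) is the easy one. The key observation is that the operators of left and right multiplication by $A$ on $\mathcal O_q$ commute, and each of the maps $Y\mapsto \lbrack A,Y\rbrack$, $Y\mapsto\lbrack A,Y\rbrack_q$, $Y\mapsto\lbrack A,Y\rbrack_{q^{-1}}$ is a polynomial in these two; hence these three maps commute with one another. Let $T$ denote the composite $Y\mapsto \lbrack A,\lbrack A,\lbrack A,Y\rbrack_q\rbrack_{q^{-1}}\rbrack$, so that (\ref{eq:DG1}) reads $T(A^*)=(q^2-q^{-2})^2\lbrack A^*,A\rbrack$. Since $T$ is linear, checking (\ref{eq:DG1}) for the pair $A$, $A^*+\delta^{-1}Z$ reduces, after cancelling the $A^*$-part via (\ref{eq:DG1}), to the single identity $T(Z)=(q^2-q^{-2})^2\lbrack Z,A\rbrack$. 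As $Z$ is obtained from $A^*$ by applying maps that commute with $T$, this follows by commuting the maps building $Z$ past $T$ and past one another, and then invoking $T(A^*)=(q^2-q^{-2})^2\lbrack A^*,A\rbrack$.

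The relation (\ref{eq:DG2}) is where the work lies, and I expect it to be the main obstacle: now $A^*+\delta^{-1}Z$ is the repeated generator, so the commuting-operator shortcut is unavailable. I would expand $\lbrack A^*+\delta^{-1}Z,\lbrack A^*+\delta^{-1}Z,\lbrack A^*+\delta^{-1}Z,A\rbrack_q\rbrack_{q^{-1}}\rbrack$ into eight terms sorted by the number of factors of $Z$. The term with no $Z$ is the left side of (\ref{eq:DG2}), disposed of by that relation; the terms with one, two and three factors of $Z$ have total degree up to ten and must collapse, after repeated use of both (\ref{eq:DG1}) and (\ref{eq:DG2}), onto $\delta^{-1}(q^2-q^{-2})^2\lbrack A,Z\rbrack$, the surviving part of the right side of (\ref{eq:DG2}) for the image. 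To keep this manageable I would first tabulate the reductions of the degree-four and degree-six $A$--$A^*$ patterns that arise, using the identity displayed just before Definition~\ref{def:qOns} together with the relations, and then substitute, rather than expand blindly.

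For the assignment (\ref{eq:Linv}), note that the relations (\ref{eq:DG1}), (\ref{eq:DG2}) are invariant under $q\mapsto q^{-1}$, since only $\lbrack 3\rbrack_q$ and even powers of $q^2-q^{-2}$ occur in them; hence the entire verification above applies verbatim with $q$ replaced by $q^{-1}$ and shows that (\ref{eq:Linv}) extends to an endomorphism $L^{-1}$. It remains to check that $L^{-1}\!\circ L$ and $L\circ L^{-1}$ fix $A$ and $A^*$. Fixing $A$ is clear. For $A^*$, applying $L^{-1}$ to $A^*+\delta^{-1}Z$ yields $A^*+\delta^{-1}(W+Z)+\delta^{-2}\lbrack A,\lbrack A,W\rbrack_q\rbrack$. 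A short computation gives $W+Z=(q+q^{-1})\lbrack A,\lbrack A,A^*\rbrack\rbrack$, while $\lbrack A,W\rbrack_q$ coincides with $\lbrack A,\lbrack A,\lbrack A,A^*\rbrack_q\rbrack_{q^{-1}}\rbrack$ (again because the maps of the previous paragraph commute, or by the identity before Definition~\ref{def:qOns}) and hence equals $(q^2-q^{-2})^2\lbrack A^*,A\rbrack$ by (\ref{eq:DG1}). Substituting and using $\delta(q+q^{-1})=(q^2-q^{-2})^2$ makes the two correction terms cancel, so $L^{-1}(L(A^*))=A^*$; the computation for $L(L^{-1}(A^*))$ is the same under $q\mapsto q^{-1}$. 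Hence $L$ is an automorphism whose inverse is given by (\ref{eq:Linv}).
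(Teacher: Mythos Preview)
The paper does not prove Lemma~\ref{lem:Lusztig}; it is quoted from \cite[Section~2]{BK}, so there is no ``paper's own proof'' to compare against. Your proposal therefore goes further than the paper does. The parts you actually carry out are correct: the verification of (\ref{eq:DG1}) for the pair $A,\,A^*+\delta^{-1}Z$ via the commuting operators $[A,\cdot]$, $[A,\cdot]_q$, $[A,\cdot]_{q^{-1}}$ is clean and valid, and your computation that $L^{-1}\circ L$ and $L\circ L^{-1}$ fix $A^*$ is right, including the cancellation coming from $\delta(q+q^{-1})=(q^2-q^{-2})^2$.

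The genuine gap is exactly where you flag it: the verification of (\ref{eq:DG2}) for the pair $A,\,A^*+\delta^{-1}Z$. You describe a strategy (expand into eight terms, reduce with both relations) but do not execute it, and this is by far the hardest step; the expansion involves words of length up to ten in $A$, $A^*$, and the reductions using (\ref{eq:DG1}) and (\ref{eq:DG2}) must be applied repeatedly and in the right order. Without this computation done, what you have is a correct proof that $L$ and $L^{-1}$ are mutually inverse \emph{provided} they are well-defined endomorphisms, together with half of the well-definedness check. If you want a self-contained proof you must either complete that expansion (tedious but doable, and a computer-algebra check would be appropriate here) or import the argument from \cite{BK}, where it is handled via a realisation of $\mathcal O_q$ inside a larger algebra in which the Lusztig-type automorphism is already known.
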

\noindent The automorphism $L$ is roughly analogous to the Lusztig automorphism of
the quantum group $U_q({\widehat{sl}}_2)$. Motivated by this,
we call $L$ the {\it Lusztig automorphism of $\mathcal O_q$} \cite{terLusztig}.
\medskip

\noindent The contents of the present paper are summarized as follows.
Let $V$ denote a finite-dimensional irreducible $\mathcal O_q$-module on which each of $A$, $A^*$ is diagonalizable.
To avoid trivialities, we assume that $V$ has dimension at least 2. We mentioned below Definition 
\ref{def:qOns}
 that $A$, $A^*$ act on $V$
as a tridiagonal pair of $q$-Racah type. We give a detailed proof of this fact in order to set the stage. We then 
 display an invertible $H \in {\rm End}(V)$ such that $L(X) = H^{-1} XH$ on $V$ for all $X \in \mathcal O_q$.
The existence of $H$ means that the  $\mathcal O_q$-module $V$ is isomorphic to the $\mathcal O_q$-module
$V$ twisted via $L^{\pm 1}$. From another point of view, it means that the following act on $V$ as isomorphic tridiagonal pairs:
(i) $A$, $A^*$; (ii) $A$, $L(A^*)$; (iii) $A$, $L^{-1}(A^*)$.
We display some identities that are satisfied by the eigenvalues of $H$. These identities are obtained using the Chu/Vandermonde
summation formula for basic hypergeometric series. Using the identities we express $H^{\pm 1}$ as a polynomial
in $A$.
We then consider some elements $K$, $B$, $K^\downarrow$, $B^\downarrow$ in ${\rm End}(V)$. These elements
are familiar in the theory of tridiagonal pairs; they are used to compare the eigenspace decompositions for $A$, $A^*$ on $V$.
The element $K$ was introduced in \cite[Section~1.1]{augmented}.
The element $B$ is obtained from $K$
 by inverting the
ordering of the eigenspace decomposition for $A$ on $V$.
The elements $K^\downarrow$, $B^\downarrow$ are obtained from $K$, $B$ by inverting the ordering of the
eigenspace decomposition for $A^*$ on $V$.
In  \cite{bockting, bocktingQexp} S. Bockting-Conrad investigates $K$, $B$ in detail, as part of her work on
the double lowering operator $\psi$ \cite{bockting1}.  She obtains some relations involving
$K,B, A$ that she uses to turn $V$ into a module
for the quantum group $U_q(\mathfrak{sl}_2)$.
We describe what happens when one of $K$, $B$, $K^\downarrow$, $B^\downarrow$ is conjugated by $H$.
We show that 
\begin{align}
&H^{-1} B H = a A - a^{2} B^{-1},
\qquad \qquad \quad  \;\;
H^{-1} K H = a^{-1} A - a^{-2} K^{-1}, \label{eq:h1}
\\
&H^{-1} B^\downarrow H = a A - a^{2} (B^\downarrow)^{-1},
 \qquad \qquad
H^{-1} K^\downarrow  H = a^{-1} A - a^{-2} (K^\downarrow)^{-1}, \label{eq:h2}
\end{align}
\noindent where $a$ is a certain scalar that is used to describe the eigenvalues for $A$ on $V$; see Lemma
\ref{lem:form} below.
Reformulating the equations
(\ref{eq:h1}),
(\ref{eq:h2}) 
we obtain
\begin{align}
&H B^{-1} H^{-1} = a^{-1} A - a^{-2} B,
\qquad \qquad \quad  \;\;
H K^{-1} H^{-1}= a A - a^{2} K, \label{eq:h3}
\\
&H (B^\downarrow)^{-1} H^{-1} = a^{-1} A - a^{-2} B^\downarrow,
 \qquad \qquad
H (K^\downarrow)^{-1} H^{-1}= a A - a^{2} K^\downarrow. \label{eq:h4}
\end{align}
We use (\ref{eq:h1})--(\ref{eq:h4})  to compare the eigenspace decompositions of $A$, $A^*$, $L^{\pm 1}(A^*)$ on $V$. 
In this comparison we use the notion of an equitable triple; this is a 3-tuple
$X$, $Y$, $Z$ of invertible elements in ${\rm End}(V)$ such that
\begin{align*}
\frac{qXY-q^{-1} YX}{q-q^{-1} } = I,
\qquad \quad 
\frac{qYZ-q^{-1} ZY}{q-q^{-1} } = I,
\qquad \quad 
\frac{qZX-q^{-1} XZ}{q-q^{-1} } = I.
\end{align*}
Our comparison involves eight equitable triples; see Proposition \ref{thm:xyz}
below. One of the triples  is $a A - a^2 K$, $M^{-1}$, $K$ where
\begin{align*}
M= \frac{a K-a^{-1} B}{a-a^{-1}}.
\end{align*}
The element $M$ appears in the work of Bockting-Conrad mentioned above; see
\cite[Section~6]{bocktingQexp}. 
\medskip

\noindent
We describe how the eight equitable triples are related to the eigenspace decompositions for $A$, $A^*$, $L^{\pm 1}(A^*)$ on $V$.
We find it illuminating to make this description using diagrams; see
Theorems 
\ref{thm:Example9}--\ref{thm:Example7} below. These theorems are the main results of the paper.
\medskip

\noindent The paper is organized as follows. In Section 2 we prove that $A$, $A^*$ act on $V$ as a tridiagonal pair of $q$-Racah type.
In Section 3 we describe the Lusztig automorphism $L$ of $\mathcal O_q$, and display an invertible $H \in {\rm End}(V)$ such that
$L(X) = H^{-1} X H$ on $V$ for all $X \in \mathcal O_q$. We also obtain some identities involving the eigenvalues of $H$.
In Section 4 we introduce some diagrams that are used to compare the eigenspace decompositions for $A$, $A^*$, $L^{\pm 1}(A^*)$ on $V$.
In Section 5 we recall the elements $K$, $B$, $K^\downarrow$, $B^\downarrow$ and discuss their basic properties.
In Section 6 we describe what happens when one of $K$, $B$, $K^\downarrow$, $B^\downarrow$ is conjugated by $H$.
In Section 7 we discuss the notion of an equitable triple, and display eight equitable triples that involve
$K$, $B$, $K^\downarrow$, $B^\downarrow$. In Section 8 we describe how the eight equitable triples 
are related to 
the eigenspace decompositions for $A$, $A^*$, $L^{\pm 1}(A^*)$ on $V$. This description is made using diagrams.

\section{The $q$-Dolan/Grady relations and tridiagonal pairs}
\noindent  We now begin our formal argument. For the rest of this paper, $V$ denotes an irreducible $\mathcal O_q$-module
with finite dimension at least 2, on which each of $A$, $A^*$ is diagonalizable. In this section we show that $A$, $A^*$ act on $V$
as a tridiagonal pair of $q$-Racah type.
\medskip

\noindent 
We consider how $A$ and $A^*$ act on each other's eigenspaces. Let ${\rm End}(V)$ denote the algebra
consisting of the $\mathbb F$-linear maps from $V$ to $V$. Let $\mathcal D$ denote the set of 
eigenvalues for $A$ on $V$. By construction $\mathcal D$ is nonempty. We emphasize that the elements of $\mathcal D$ are mutually distinct.
For $\lambda \in \mathcal D$ define  $E_\lambda \in {\rm End}(V)$ to be the projection onto the $\lambda$-eigenspace for $A$ on $V$.
The following holds on $V$:
(i) $A E_\lambda = \lambda E_\lambda = E_\lambda A$ ($\lambda \in \mathcal D$); (ii)
$E_\lambda E_\mu = \delta_{\lambda, \mu} E_\lambda$ ($\lambda, \mu \in \mathcal D$);
(iii) $I = \sum_{\lambda \in \mathcal D} E_\lambda$.
For $\lambda, \mu \in \mathcal D$,
multiply each side of
(\ref{eq:DG1}) 
on the left by $E_\lambda$ and the right by $E_\mu$. After some routine simplification, we obtain
\begin{align*}
E_\lambda A^* E_\mu (\lambda - \mu)( q \lambda - q^{-1} \mu) (q^{-1} \lambda - q \mu)
= E_\lambda A^* E_\mu (\mu-\lambda) (q^2-q^{-2})^2.
\end{align*}
Therefore
\begin{align*} 
E_\lambda A^* E_\mu (\lambda - \mu)P(\lambda, \mu)=0,
\end{align*}
where
\begin{align}
P(\lambda, \mu) = \lambda^2- (q^2+q^{-2}) \lambda \mu + \mu^2 + (q^2-q^{-2})^2.
\label{eq:P}
\end{align}
Consequently
\begin{align}
\label{eq:choices}
E_\lambda A^* E_\mu= 0 \qquad {\mbox{\rm or}}  \qquad \lambda = \mu \qquad {\mbox{\rm or}} \qquad P(\lambda, \mu)=0.
\end{align}
Note that  $P(\lambda, \mu)= P(\mu, \lambda)$. 
Call $\lambda, \mu$ {\it adjacent} whenever $\lambda \not=\mu$ and $P(\lambda, \mu)=0$. The adjacency relation is symmetric.
\medskip

\noindent
We just defined a symmetric binary relation on $\mathcal D$, called adjacency.  This relation turns $\mathcal D$ into an undirected graph.
For the graph $\mathcal D$, each vertex is adjacent to at most two other vertices, since the polynomial $P$ has degree two. The graph $\mathcal D$ is
connected, because  $V$ is irreducible as an $\mathcal O_q$-module. By these comments, the graph $\mathcal D$ is either a path or a cycle.
Shortly we will show that a cycle cannot occur.
\begin{lemma} 
\label{lem:3v}
Referring to the graph $\mathcal D$,
let $\lambda $ denote a vertex that is adjacent to two distinct vertices $\mu, \nu$. Then
\begin{align*}
\mu - (q^2+q^{-2}) \lambda + \nu = 0.
\end{align*}
\end{lemma}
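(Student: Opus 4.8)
The plan is to exploit the observation that adjacency of $\lambda$ to $\mu$ and to $\nu$ means precisely that both $\mu$ and $\nu$ are roots of the polynomial $P(\lambda,\cdot)$, regarded as a polynomial in its second argument. Writing out the defining formula (\ref{eq:P}), the condition $P(\lambda,\mu)=0$ reads
\[
\lambda^2 - (q^2+q^{-2})\lambda\mu + \mu^2 + (q^2-q^{-2})^2 = 0,
\]
and the condition $P(\lambda,\nu)=0$ is the same identity with $\mu$ replaced by $\nu$.

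First I would subtract the equation $P(\lambda,\nu)=0$ from $P(\lambda,\mu)=0$. The terms $\lambda^2$ and $(q^2-q^{-2})^2$, which are independent of the second argument, cancel, leaving
\[
-(q^2+q^{-2})\lambda(\mu-\nu) + (\mu^2-\nu^2) = 0.
\]
Factoring $\mu^2-\nu^2 = (\mu-\nu)(\mu+\nu)$ and collecting terms, this becomes
\[
(\mu-\nu)\bigl(\mu+\nu-(q^2+q^{-2})\lambda\bigr) = 0.
\]

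Since $\mu$ and $\nu$ are distinct by hypothesis, we have $\mu-\nu\neq 0$, so I may cancel this factor to obtain $\mu-(q^2+q^{-2})\lambda+\nu=0$, which is the assertion. Equivalently, and perhaps more conceptually, $P(\lambda,t)$ is a monic quadratic in $t$ whose $t$-coefficient is $-(q^2+q^{-2})\lambda$; as $\mu,\nu$ are its two distinct roots, Vieta's formula for the sum of the roots gives $\mu+\nu=(q^2+q^{-2})\lambda$ at once. There is no genuine obstacle in this argument: the result is a direct algebraic consequence of the symmetry and degree-two shape of $P$, and the only point demanding care is invoking the hypothesis $\mu\neq\nu$ to justify the final cancellation.
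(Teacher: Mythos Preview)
Your proof is correct and follows essentially the same approach as the paper: subtract $P(\lambda,\mu)=0$ and $P(\lambda,\nu)=0$, then divide by the nonzero factor $\mu-\nu$. The Vieta's-formula remark is a nice conceptual gloss, but the underlying argument is identical to the paper's.
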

\begin{proof} The vertices $\lambda, \mu$ are adjacent, so $P(\lambda, \mu)=0$. The vertices $\lambda, \nu$
are adjacent, so $P(\lambda, \nu)=0$. By these comments and 
(\ref{eq:P}),
\begin{align*}
 0 = \frac{P(\lambda, \mu) - P(\lambda, \nu)}{\mu-\nu}
 = \mu - (q^2+q^{-2}) \lambda+ \nu.
 \end{align*}
\end{proof}

\begin{lemma} 
\label{lem:n2} We have $\vert \mathcal D \vert \geq 2$.
\end{lemma}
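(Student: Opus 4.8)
The plan is to argue by contradiction, exploiting irreducibility together with the diagonalizability of $A^*$. So I would suppose that $\vert \mathcal D\vert =1$ and derive a contradiction with the hypothesis $\dim V\geq 2$. If $\vert \mathcal D\vert=1$, then $A$ has a single eigenvalue, say $\lambda$, and since $A$ is diagonalizable on $V$ this forces $A=\lambda I$. In particular $A$ acts as a scalar, so every subspace of $V$ is $A$-invariant.

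The key step is then to produce a proper nonzero submodule. Since $A^*$ is diagonalizable, it has at least one eigenvector; pick a nonzero $v\in V$ with $A^*v=\theta v$ for some scalar $\theta$. The one-dimensional subspace $\mathbb F v$ is invariant under $A^*$ by the choice of $v$, and invariant under $A$ because $A$ is scalar. Since $\mathcal O_q$ is generated by $A$ and $A^*$, it follows that $\mathbb F v$ is an $\mathcal O_q$-submodule of $V$. As $V$ is irreducible and $\mathbb F v\neq 0$, we must have $V=\mathbb F v$, whence $\dim V=1$. This contradicts the standing assumption that $\dim V\geq 2$, completing the argument.

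There is essentially no computational obstacle here; the content is entirely structural. The one point that requires care is the inference that a single eigenvalue for $A$ forces $A$ to be a scalar operator, which uses diagonalizability in an essential way (a non-semisimple operator could have a single eigenvalue without being scalar). Given that, the heart of the matter is simply the observation that a simultaneous eigenvector of the two generators spans an $\mathcal O_q$-submodule, and irreducibility then collapses the dimension. One could alternatively phrase the same idea by noting that each $A^*$-eigenspace is automatically $A$-invariant once $A$ is scalar, forcing $A^*$ itself to be scalar and hence every line to be a submodule; but passing directly to a common eigenvector is the most economical route.
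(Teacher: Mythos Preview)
Your proof is correct and follows essentially the same approach as the paper's: assume $\vert\mathcal D\vert=1$, use diagonalizability of $A$ to conclude it acts as a scalar, take an $A^*$-eigenvector, and observe that its span is an $\mathcal O_q$-submodule, forcing $\dim V=1$ by irreducibility. Your write-up is slightly more explicit (e.g.\ invoking that $A,A^*$ generate $\mathcal O_q$), but the argument is identical in substance.
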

\begin{proof} By construction $\vert \mathcal D\vert \geq 1$. We assume that $\vert \mathcal D\vert =1$ and get a contradiction. The action of $A$ on $V$ is diagonalizable with a single eigenvalue.
So $A$ acts on $V$ as a scalar multiple of the identity. The action of $A^*$ on $V$ is diagonalizable, so there exists a nonzero $v \in V$ that is an eigenvector for $A^*$.
The subspace $W=\mathbb F v$ is invariant under $A$ and $A^*$. Therefore $W=V$ since $V$ is irreducible as an $\mathcal O_q$-module. Consequently $V$ has dimension one,
 a  contradiction. We have shown that $\vert \mathcal D\vert \geq 2$.
\end{proof}

\begin{definition}\rm Define $d=\vert \mathcal D\vert -1$. Let $\lbrace \theta_i \rbrace_{i=0}^d$
denote an ordering  of $\mathcal D$  such that $\theta_{i-1}, \theta_i$ are adjacent for $1 \leq i \leq d$. 
\end{definition}
 
 \begin{lemma} 
 \label{lem:form}
 The graph $\mathcal D$ is a path. Moreover, there exists $0 \not=a \in \mathbb F$ such that
 \begin{align*}
 \theta_i = a q^{d-2i} + a^{-1} q^{2i-d} \qquad \qquad (0 \leq i \leq d).
 \end{align*}
 \end{lemma}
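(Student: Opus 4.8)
The plan is to solve the three-term recurrence supplied by Lemma \ref{lem:3v} in closed form, and then to use the adjacency condition $P(\lambda,\mu)=0$ twice: once to pin down the free constants (yielding the scalar $a$), and once more to exclude the cycle.

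First I would record the recurrence explicitly. For each internal vertex $\theta_i$ with $1 \le i \le d-1$, the neighbors are $\theta_{i-1}$ and $\theta_{i+1}$, so Lemma \ref{lem:3v} gives
\begin{align*}
\theta_{i-1} + \theta_{i+1} = (q^2+q^{-2})\,\theta_i \qquad (1 \le i \le d-1).
\end{align*}
The characteristic polynomial of this linear recurrence is $x^2 - (q^2+q^{-2})x + 1 = (x-q^2)(x-q^{-2})$, whose roots $q^2$, $q^{-2}$ are distinct because $q$ is not a root of unity. Solving the system
\begin{align*}
\alpha + \beta = \theta_0, \qquad \qquad \alpha q^2 + \beta q^{-2} = \theta_1,
\end{align*}
whose coefficient matrix is invertible (again since $q^2 \ne q^{-2}$), produces unique scalars $\alpha$, $\beta$; a routine induction on $i$ using the recurrence then yields $\theta_i = \alpha q^{2i} + \beta q^{-2i}$ for $0 \le i \le d$.

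Next I would exploit the adjacency of $\theta_0$, $\theta_1$, namely $P(\theta_0, \theta_1)=0$. Substituting the closed form into (\ref{eq:P}) and simplifying, the $\theta$-dependent part collapses: the key computation is that the $q^{\pm 4i}$ terms cancel, leaving
\begin{align*}
P(\theta_i, \theta_{i+1}) = (q^2-q^{-2})^2 (1 - \alpha\beta)
\end{align*}
for every $i$. Taking $i=0$ and using $P(\theta_0,\theta_1)=0$ together with $q^2 \ne q^{-2}$ forces $\alpha\beta = 1$. In particular $\alpha$, $\beta$ are both nonzero, so I may set $a = \beta q^{-d}$, which is nonzero, and rewrite $\theta_i = \alpha q^{2i} + \beta q^{-2i} = a^{-1} q^{2i-d} + a\,q^{d-2i}$, exactly the asserted form.

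It remains to show $\mathcal D$ is a path rather than a cycle, and this I expect to be the main obstacle. Since the graph is connected with every vertex of degree at most two, the only alternative is a cycle, in which case every vertex has degree exactly two. Then the recurrence and the conclusions $\theta_i = \alpha q^{2i} + \beta q^{-2i}$, $\alpha\beta=1$ persist, but in addition the wrap-around instances of Lemma \ref{lem:3v} at $\theta_0$ (neighbors $\theta_{n-1}$, $\theta_1$) and at $\theta_{n-1}$ (neighbors $\theta_{n-2}$, $\theta_0$), where $n = d+1$, yield
\begin{align*}
\alpha(q^{2n}-1) + \beta(q^{-2n}-1) = 0, \qquad \qquad \alpha q^{-2}(q^{2n}-1) + \beta q^{2}(q^{-2n}-1) = 0.
\end{align*}
Because $q$ is not a root of unity, $q^{2n} \ne 1$; eliminating $\alpha(q^{2n}-1)$ between the two relations gives $\beta(q^{-2n}-1)(q^2-q^{-2}) = 0$, hence $\beta = 0$, and then the first relation forces $\alpha = 0$, contradicting $\alpha\beta = 1$. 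Thus no cycle can occur and $\mathcal D$ is a path. The only delicate points here are the index bookkeeping in the two wrap-around relations and checking that they are independent enough to kill both $\alpha$ and $\beta$.
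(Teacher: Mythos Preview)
Your proof is correct and follows essentially the same strategy as the paper: solve the three-term recurrence from Lemma~\ref{lem:3v} via its characteristic polynomial, pin down the constants using $P(\theta_0,\theta_1)=0$, and rule out the cycle with a wrap-around argument. The only variation is in the cycle step---the paper extends the closed form to index $n$ (writing $\theta_n=\theta_0$) and reads off $q^{2n}=1$ from the single equation $\theta_n=\theta_0$, whereas you set up the two wrap-around instances of Lemma~\ref{lem:3v} as a $2\times 2$ linear system and force $\alpha=\beta=0$; both routes are short and equally valid.
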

 \begin{proof}  For notational convenience define $n = \vert \mathcal D \vert$. For the graph $\mathcal D$ let $e$ denote the number of (undirected) edges. So $e=d$ if
 $\mathcal D$ is a path, and $e=n$ if $\mathcal D$ is a cycle. If $\mathcal D$ is a cycle then define $\theta_n = \theta_0$.
 For the graph $\mathcal D$ the vertices $\theta_{i-1}, \theta_i$ are adjacent for $1 \leq i \leq e$. By this and
 Lemma \ref{lem:3v},
 \begin{align*}
 \theta_{i-1} - (q^2+ q^{-2}) \theta_i + \theta_{i+1} = 0 \qquad \qquad (1 \leq i \leq e-1).
 \end{align*}
 For this recurrence the characteristic polynomial is 
 \begin{align*}
 1-(q^2+q^{-2})x+x^2 = (x-q^2)(x-q^{-2}).
 \end{align*}
 The roots $q^2$, $q^{-2}$ are distinct since $q$ is not a root of 1. By these comments the recurrence has 
 general solution 
 \begin{align}
 \theta_i = a q^{d-2i} + \alpha q^{2i-d} \qquad \qquad (0 \leq i \leq e),
 \label{eq:eee}
 \end{align}
 where $a, \alpha \in \mathbb F$.
 Using this general solution,
 \begin{align*}
 0 = P(\theta_0, \theta_1) = (q^2-q^{-2})^2 (1-a \alpha).
 \end{align*}
 The scalar $q^2-q^{-2}$ is nonzero since 
 $q$ is not a root of unity, so $a\alpha=1$. Therefore $a\not=0$ and $\alpha = a^{-1}$. It remains to show that the graph $\mathcal D$ is not a cycle.
 Assume that $\mathcal D$
 is a cycle.
 By construction $\theta_n = \theta_0$. Also by construction $n\geq 3$, so $d=n-1\geq 2$. Using (\ref{eq:eee})  with $\alpha = a^{-1}$,
 \begin{align*}
 0 = \frac{\theta_n-\theta_0}{\theta_d-\theta_1}= \frac{q^{2n} -1}{q^{2d} - q^2}.
 \end{align*}
 Therefore
 $q^{2n}=1$, contradicting our assumption
 that $q$ is not a root of unity. Consequently $\mathcal D$ is not a cycle, and the result follows.
 \end{proof}
 
 \noindent For notational convenience, abbreviate $E_i = E_{\theta_i}$ for $0 \leq i \leq d$. 
 By the construction and (\ref{eq:choices}),
 \begin{align} 
 \label{eq:triple}
           E_i A^* E_j = 0 \quad {\mbox{\rm if}}  \quad   \vert  i - j \vert > 1 \qquad (0 \leq i,j\leq d).
          \end{align}
     \noindent For $0 \leq i \leq d$ let $V_i$ denote the $\theta_i$-eigenspace for $A$ on $V$. So
 $V_i = E_iV$. 
 
 \begin{lemma}  
 \label{lem:AsVVV}
 We have
 \begin{align}
 A^* V_i \subseteq V_{i-1} + V_i + V_{i+1} \qquad \qquad (0 \leq i \leq d),
 \label{eq:threeterm}
 \end{align}
 where $V_{-1} = 0$ and $V_{d+1} = 0$.
 \end{lemma}
 \begin{proof} We have
 \begin{align*}
 A^* V_i = I A^* E_iV = \sum_{\ell=0}^d E_\ell A^* E_iV.
 \end{align*}
 For $0 \leq \ell \leq d$, $E_\ell A^* E_i = 0 $ if $\vert \ell - i \vert > 1$.
 Also for $0 \leq \ell \leq d$, $E_\ell A^* E_iV \subseteq E_\ell V = V_\ell$.
 The result follows.
 \end{proof}
 \noindent Lemma  \ref{lem:AsVVV}  shows how $A^*$ acts on the eigenspaces  for $A$ on $V$. Interchanging the roles of $A$, $A^*$ we see that $A$ acts on the
 eigenspaces for $A^*$ on $V$ in the following way.
 \begin{lemma}  
 \label{lem:AVsVsVs}
 There exists an ordering $\lbrace V^*_i \rbrace_{i=0}^\delta$ of the eigenspaces for $A^*$ on $V$ such that
 \begin{align}
 A V^*_i \subseteq V^*_{i-1} + V^*_i + V^*_{i+1} \qquad \qquad (0 \leq i \leq \delta),
 \end{align}
 where $V^*_{-1} = 0$ and $V^*_{\delta+1}=0$.
 \end{lemma}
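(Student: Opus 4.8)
The plan is to exploit the symmetry of the defining relations under the interchange of $A$ and $A^*$. Indeed, relation (\ref{eq:DG2}) is obtained from (\ref{eq:DG1}) simply by swapping $A \leftrightarrow A^*$, and the irreducibility hypothesis on $V$ is likewise symmetric in $A$, $A^*$. Consequently every step of the present section leading up to Lemma \ref{lem:AsVVV} should go through verbatim with the roles of $A$ and $A^*$ reversed, and the proof amounts to checking that none of those steps secretly used a property special to $A$.

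Concretely, I would let $\mathcal D^*$ denote the set of eigenvalues for $A^*$ on $V$, and for $\lambda \in \mathcal D^*$ let $E^*_\lambda$ be the projection onto the $\lambda$-eigenspace for $A^*$. Multiplying each side of (\ref{eq:DG2}) on the left by $E^*_\lambda$ and on the right by $E^*_\mu$, then simplifying exactly as before, should yield the analog of (\ref{eq:choices}): either $E^*_\lambda A E^*_\mu = 0$, or $\lambda = \mu$, or $P(\lambda,\mu)=0$, with the \emph{same} polynomial $P$ from (\ref{eq:P}). This equips $\mathcal D^*$ with the same adjacency relation, turning it into an undirected graph in which every vertex has degree at most two (since $P$ has degree two) and which is connected (since $V$ is irreducible as an $\mathcal O_q$-module), hence a path or a cycle.

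To finish, I would rerun Lemmas \ref{lem:3v}, \ref{lem:n2}, \ref{lem:form} with $A^*$ in place of $A$: the three-eigenvalue linear relation holds on $\mathcal D^*$, the graph has at least two vertices, and the cycle case is excluded by the same computation that forced $q^{2n}=1$. Thus $\mathcal D^*$ is a path; ordering its vertices along the path as $\lbrace \theta^*_i \rbrace_{i=0}^\delta$ with $\delta = \vert \mathcal D^* \vert - 1$, and setting $V^*_i$ equal to the $\theta^*_i$-eigenspace, the analog of (\ref{eq:triple}) gives $E^*_i A E^*_j = 0$ whenever $\vert i-j \vert > 1$; the computation of Lemma \ref{lem:AsVVV} then delivers the claimed containment. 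I do not anticipate a genuine obstacle here, since the earlier derivation used only the defining relation and the irreducibility of $V$, both invariant under $A \leftrightarrow A^*$; the sole point requiring care is to confirm this explicitly, so that the appeal to symmetry is fully justified.
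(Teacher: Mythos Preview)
Your proposal is correct and matches the paper's own approach exactly: the paper simply says ``Interchanging the roles of $A$, $A^*$'' immediately before stating the lemma, leaving the details implicit. You have spelled out precisely those details, and your observation that the symmetry is justified because the argument uses only the $q$-Dolan/Grady relations and irreducibility (both invariant under $A \leftrightarrow A^*$) is exactly the point.
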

 
 \noindent The concept of a tridiagonal pair was introduced in \cite[Definition~1.1]{TD00}. By Lemmas
  \ref{lem:AsVVV}, \ref{lem:AVsVsVs} the elements $A, A^*$ act on the $\mathcal O_q$-module $V$
  as a tridiagonal pair.
  \medskip
  
  \noindent 
 By \cite[Lemma~4.5]{TD00} the integers $d, \delta$ from Lemmas \ref{lem:AsVVV}, 
  \ref{lem:AVsVsVs} are equal; we call this common
  value the {\it diameter} of the $\mathcal O_q$-module $V$. For $0 \leq i \leq d$ let $\theta^*_i$ denote the
  eigenvalue 
  associated with the eigenspace $V^*_i$ for $A^*$ on $V$.
  \begin{lemma} 
  \label{eq:ths} 
  With the above notation, there exists
  $0 \not=b \in \mathbb F$ such that
 \begin{align*}
 \theta^*_i = b q^{d-2i} + b^{-1} q^{2i-d} \qquad \qquad (0 \leq i \leq d).
 \end{align*}
 \end{lemma}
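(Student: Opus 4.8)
The plan is to run the entire development of Section 2 with the roles of $A$ and $A^*$ interchanged, exploiting the symmetry of Definition \ref{def:qOns}: the relation (\ref{eq:DG2}) is obtained from (\ref{eq:DG1}) by swapping $A \leftrightarrow A^*$. Consequently every conclusion drawn about the eigenvalues of $A$ has a mirror image for the eigenvalues of $A^*$, and the asserted closed form for $\theta^*_i$ is precisely the mirror of Lemma \ref{lem:form}, now with the scalar $b$ in place of $a$.

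Concretely, for an eigenvalue $\lambda$ of $A^*$ on $V$ let $E^*_\lambda \in {\rm End}(V)$ denote the projection onto the corresponding eigenspace. First I would multiply each side of (\ref{eq:DG2}) on the left by $E^*_\lambda$ and on the right by $E^*_\mu$; the same routine simplification that produced (\ref{eq:choices}) now yields
\begin{align*}
E^*_\lambda A E^*_\mu (\lambda - \mu) P(\lambda, \mu) = 0,
\end{align*}
with the \emph{same} polynomial $P$ of (\ref{eq:P}), since that derivation used only the shape of the $q$-Dolan/Grady relation and not which generator plays which role. This endows the set of $A^*$-eigenvalues with an adjacency graph $\mathcal{D}^*$ governed by $P$, connected by the irreducibility of $V$ and of maximum degree two. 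The analog of Lemma \ref{lem:3v} for $A^*$ then gives, for any vertex $\lambda$ adjacent to two distinct vertices $\mu, \nu$, the relation $\mu - (q^2+q^{-2})\lambda + \nu = 0$.

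Next I would invoke the path ordering $\lbrace V^*_i \rbrace_{i=0}^\delta$ already supplied by Lemma \ref{lem:AVsVsVs}, together with the equality $\delta = d$ recorded just after that lemma, so that consecutive eigenvalues $\theta^*_{i-1}, \theta^*_i$ are adjacent in $\mathcal{D}^*$. Feeding these into the three-point relation yields the recurrence
\begin{align*}
\theta^*_{i-1} - (q^2+q^{-2})\theta^*_i + \theta^*_{i+1} = 0 \qquad \qquad (1 \leq i \leq d-1),
\end{align*}
whose characteristic polynomial is again $(x-q^2)(x-q^{-2})$ with roots distinct since $q$ is not a root of unity. Hence $\theta^*_i = b q^{d-2i} + \beta q^{2i-d}$ for scalars $b, \beta \in \mathbb{F}$, exactly as in (\ref{eq:eee}). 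Evaluating $0 = P(\theta^*_0, \theta^*_1)$ as in the proof of Lemma \ref{lem:form} forces $(q^2-q^{-2})^2(1 - b\beta) = 0$, so $b\beta = 1$; therefore $b \neq 0$ and $\beta = b^{-1}$, giving the claimed form.

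I expect no genuine obstacle here, since the symmetry makes the transfer essentially mechanical. The only points requiring care are confirming that the identical polynomial $P$ governs $A^*$ (which holds because (\ref{eq:DG2}) is (\ref{eq:DG1}) with $A, A^*$ swapped) and that the ordering from Lemma \ref{lem:AVsVsVs} is a genuine path ordering of $\mathcal{D}^*$ of length $d$, so that the recurrence runs over the full index range. The analog of Lemma \ref{lem:n2} is automatic, since $\vert \mathcal{D}^* \vert = d+1 \geq 2$.
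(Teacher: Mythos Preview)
Your proposal is correct and takes essentially the same approach as the paper: the paper's proof is the single sentence ``Interchange the roles of $A$ and $A^*$ in Lemma \ref{lem:form},'' and you have simply spelled out what that interchange entails. Your more detailed account is sound, and the two points you flag as requiring care (that the same $P$ governs $A^*$, and that the ordering from Lemma \ref{lem:AVsVsVs} is a path ordering of length $d$) are exactly the content of that role reversal.
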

 \begin{proof} Interchange the roles of $A$ and $A^*$ in Lemma
 \ref{lem:form}.
 \end{proof}
 \noindent We have a comment.
 \begin{lemma}  Neither of $a^2$, $b^2$ is among $q^{2d-2}, q^{2d-4}, \ldots, q^{2-2d}$.
 \end{lemma}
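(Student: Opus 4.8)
The plan is to extract the stated non-vanishing conditions directly from the requirement that the eigenvalues $\{\theta_i\}_{i=0}^d$ of $A$ (and $\{\theta^*_i\}_{i=0}^d$ of $A^*$) be mutually distinct. The key ingredient is a factorization of $\theta_i-\theta_j$. Using the closed form $\theta_i = a q^{d-2i} + a^{-1} q^{2i-d}$ from Lemma \ref{lem:form}, a short computation yields
\[
\theta_i - \theta_j = a^{-1} q^{-d}\,(q^{2i}-q^{2j})\,\bigl(1 - a^2 q^{2d-2i-2j}\bigr) \qquad (0 \le i,j \le d).
\]
First I would verify this identity by substituting the closed form and grouping the $a$-terms and the $a^{-1}$-terms separately; the factor $q^{2i}-q^{2j}$ comes out naturally, and the remaining bracket is what carries the dependence on $a$.

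Next I would invoke distinctness. Since the $\theta_i$ are the mutually distinct eigenvalues of $A$, we have $\theta_i \neq \theta_j$ whenever $i \neq j$. In the factorization, $a^{-1}q^{-d} \neq 0$, and $q^{2i} \neq q^{2j}$ for $0 \le i \neq j \le d$ because $q$ is not a root of unity. Hence the remaining factor must be nonzero: $1 - a^2 q^{2d-2i-2j} \neq 0$, that is, $a^2 \neq q^{2i+2j-2d}$ for all distinct $i,j$ in $\{0,1,\ldots,d\}$.

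It then remains to check that, as $(i,j)$ ranges over pairs with $0 \le i,j \le d$ and $i \neq j$, the exponent $2i+2j-2d$ attains exactly the values $2-2d, 4-2d, \ldots, 2d-2$, so that the forbidden set $\{q^{2i+2j-2d}\}$ coincides with the list $q^{2d-2}, q^{2d-4}, \ldots, q^{2-2d}$ in the statement. This reduces to the elementary observation that $i+j$ runs over every integer from $1$ (realized by $\{0,1\}$) to $2d-1$ (realized by $\{d-1,d\}$) as $i \neq j$ vary in $[0,d]$. This establishes the claim for $a^2$. For $b^2$ I would repeat the argument verbatim, using the form $\theta^*_i = b q^{d-2i} + b^{-1}q^{2i-d}$ from Lemma \ref{eq:ths} together with the distinctness of the eigenvalues of $A^*$.

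I expect the only genuinely delicate point to be the bookkeeping in the last step, namely confirming that distinct pairs $(i,j)$ realize every intermediate value of $i+j$ (in particular the even sums, where one must choose $i,j$ differing by $2$ to avoid $i=j$), and hence that the exponents $2i+2j-2d$ sweep out precisely the stated list with none missing and none extra. Everything else follows from the single factorization identity and the fact that $q$ is not a root of unity.
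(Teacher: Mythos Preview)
Your proof is correct and takes essentially the same approach as the paper, which simply says to use Lemmas~\ref{lem:form} and~\ref{eq:ths} together with the distinctness of $\{\theta_i\}_{i=0}^d$ and $\{\theta^*_i\}_{i=0}^d$. You have supplied the details the paper omits: the factorization $\theta_i-\theta_j=a^{-1}q^{-d}(q^{2i}-q^{2j})(1-a^2q^{2d-2i-2j})$ and the verification that $i+j$ sweeps exactly $\{1,\ldots,2d-1\}$ over distinct pairs.
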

 \begin{proof} Use Lemmas \ref{lem:form},  \ref{eq:ths} 
 and the fact that 
  the scalars $\lbrace \theta_i\rbrace_{i=0}^d$ are mutually distinct and the
 scalars $\lbrace \theta^*_i\rbrace_{i=0}^d$ are mutually distinct.
 \end{proof}

 \noindent We mentioned earlier that the elements $A$, $A^*$ act on the $\mathcal O_q$-module $V$
 as a tridiagonal pair. By the form of the eigenvalues
 $\lbrace \theta_i \rbrace_{i=0}^d$
 (resp. $\lbrace \theta^*_i \rbrace_{i=0}^d$) in Lemma
 \ref{lem:form} (resp. Lemma \ref{eq:ths}), we see that this tridiagonal pair has $q$-Racah type in the sense of 
\cite[Section~1]{terLop}.

 \section{The Lusztig automorphism of $\mathcal O_q$}
We continue to discuss the $\mathcal O_q$-module $V$ from Section 2. Recall the Lusztig automorphism $L$ of $\mathcal O_q$
from Lemma \ref{lem:Lusztig}.
In \cite[Section~8]{terLusztig} we briefly described the action of $L(A^*)$ on $V$. In the present 
section we describe this action in greater detail.
Referring to the equation on the right in
(\ref{eq:Lusztig}), for $0 \leq i,j\leq d$ we multiply each side on the left by $E_i$ and the right by $E_j$. This yields
\begin{align}
E_i L(A^*) E_j = E_i A^* E_j t_{ij},
\label{eq:ELE}
\end{align}
where
\begin{align}
\label{eq:tij}
t_{ij} & = 1 + \frac{\theta_i - \theta_j}{q-q^{-1}}\,\frac{ q \theta_i - q^{-1} \theta_j}{q^2-q^{-2}}.
\end{align}
If $\vert i-j \vert > 1$ then
 each side of  (\ref{eq:ELE})
 is equal to zero, in view of
 (\ref{eq:triple}).
We now consider
 (\ref{eq:ELE}) for $\vert i-j\vert \leq 1$. Note that
\begin{align}
\label{eq:tii}
 t_{ii} = 1  \qquad \qquad  (0 \leq i \leq d).
 \end{align}
 
\begin{lemma} 
\label{lem:tijhelp}
For $0 \leq i,j\leq d$ such that $\vert i-j\vert = 1$,
\begin{align*}
\frac{q \theta_i - q^{-1} \theta_j}{q^2-q^{-2}} \,\frac{q \theta_j - q^{-1}\theta_i}{q^2-q^{-2}} = 1.
\end{align*}
\end{lemma}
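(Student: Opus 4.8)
The plan is to verify the identity by direct substitution of the explicit eigenvalue formula from Lemma~\ref{lem:form}. First I observe that the quantity
\[
\frac{q \theta_i - q^{-1} \theta_j}{q^2-q^{-2}} \,\frac{q \theta_j - q^{-1}\theta_i}{q^2-q^{-2}}
\]
is symmetric under interchanging $i$ and $j$, so it suffices to treat the case $j = i+1$; the case $j = i-1$ then follows by relabeling.

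To streamline the bookkeeping I would write $\theta_i = x_i + x_i^{-1}$ with $x_i = a q^{d-2i}$, which is exactly the form supplied by Lemma~\ref{lem:form}. With this abbreviation $x_{i+1} = q^{-2} x_i$, so that $x_j = q^{-2} x_i$ and $x_j^{-1} = q^{2} x_i^{-1}$ in the case $j=i+1$. Substituting into the first factor, the terms in $x_i^{-1}$ cancel and I expect to obtain $q \theta_i - q^{-1} \theta_j = q^{-1}(q^2-q^{-2}) x_i$. A parallel computation for the second factor cancels the terms in $x_i$ instead, leaving $q \theta_j - q^{-1} \theta_i = q (q^2-q^{-2}) x_i^{-1}$.

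Multiplying these two expressions, the factors $x_i$ and $x_i^{-1}$ cancel against each other, as do the $q^{-1}$ and $q$, so the product of the numerators is exactly $(q^2-q^{-2})^2$. Dividing by the denominator $(q^2-q^{-2})^2$, which is nonzero since $q$ is not a root of unity, yields the value $1$, as claimed.

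There is no serious obstacle here: the content is purely the tracking of $q$-exponents, and the only point needing care is noticing which of the two cross-terms vanishes in each factor. Introducing the substitution $x_i = a q^{d-2i}$ is what makes the cancellation transparent, since it removes the need to carry the scalar $a$ and the shift $d$ explicitly through the computation and reduces the identity to the single relation $x_{i+1} = q^{-2} x_i$ between consecutive eigenvalue parameters.
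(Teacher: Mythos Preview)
Your proof is correct. It takes a genuinely different route from the paper, however. The paper observes that the identity is nothing more than a reformulation of the adjacency relation $P(\theta_i,\theta_j)=0$: expanding the product $(q\theta_i-q^{-1}\theta_j)(q\theta_j-q^{-1}\theta_i)$ gives $(q^2+q^{-2})\theta_i\theta_j-\theta_i^2-\theta_j^2 = (q^2-q^{-2})^2 - P(\theta_i,\theta_j)$, and since $\vert i-j\vert=1$ means exactly that $\theta_i,\theta_j$ are adjacent, $P(\theta_i,\theta_j)=0$ and the identity follows in one line. Your argument instead invokes the explicit closed form $\theta_i = aq^{d-2i}+a^{-1}q^{2i-d}$ from Lemma~\ref{lem:form} and computes each factor separately via the substitution $x_i=aq^{d-2i}$. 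This is perfectly valid and the cancellations you describe are accurate; the trade-off is that you are using a stronger input (the full solution of the eigenvalue recurrence) where the paper only needs the quadratic relation defining adjacency. The paper's route is shorter and more conceptual, while yours has the mild advantage of producing the individual factors $q^{-1}(q^2-q^{-2})x_i$ and $q(q^2-q^{-2})x_i^{-1}$ explicitly, which is information the paper's argument does not surface.
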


\begin{proof} This is a reformulation of the equation $P(\theta_i, \theta_j) = 0$.
\end{proof}

\begin{lemma} \label{lem:tt}
For $0 \leq i,j\leq d$ such that $\vert i-j\vert = 1$,
\begin{align}
t_{ij} t_{ji} = 1.
\label{eq:tt}
\end{align}
\end{lemma}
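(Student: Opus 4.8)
The plan is to multiply out $t_{ij}t_{ji}$ directly from the definition (\ref{eq:tij}) and show that the cross terms cancel, leaving $1$. To keep the bookkeeping clean, I would introduce the abbreviations
\[
u = \frac{q\theta_i - q^{-1}\theta_j}{q^2-q^{-2}}, \qquad v = \frac{q\theta_j - q^{-1}\theta_i}{q^2-q^{-2}}, \qquad w = \frac{\theta_i-\theta_j}{q-q^{-1}},
\]
so that $t_{ij}=1+wu$ and $t_{ji}=1-wv$, the sign flip arising because $\theta_j-\theta_i=-(\theta_i-\theta_j)$. Since $\vert i-j\vert=1$, Lemma \ref{lem:tijhelp} supplies the relation $uv=1$ for free.

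Expanding the product gives $t_{ij}t_{ji}=(1+wu)(1-wv)=1+w(u-v)-w^2uv$, and substituting $uv=1$ reduces this to $1+w(u-v)-w^2$. Hence the entire claim collapses to the single identity $w(u-v)=w^2$. Because the eigenvalues in $\mathcal D$ are mutually distinct we have $\theta_i\neq\theta_j$, so $w\neq 0$, and the identity to be checked is equivalent to $u-v=w$.

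The one substantive step is therefore the verification of $u-v=w$. Combining $u$ and $v$ over the common denominator, the numerator simplifies to $(q+q^{-1})(\theta_i-\theta_j)$, so $u-v = (q+q^{-1})(\theta_i-\theta_j)/(q^2-q^{-2})$; then factoring $q^2-q^{-2}=(q-q^{-1})(q+q^{-1})$ cancels the factor $q+q^{-1}$ and returns exactly $w$. This factorization is the only place any genuine algebra enters, and it is precisely where a stray sign or a dropped factor could slip in; everything else is purely formal. Substituting $u-v=w$ back into the expansion yields $t_{ij}t_{ji}=1+w^2-w^2=1$, which is the assertion. I note in passing that one could instead insert the closed form $\theta_i=aq^{d-2i}+a^{-1}q^{2i-d}$ from Lemma \ref{lem:form} and expand, but that route is messier and drags in the scalar $a$ needlessly, so I would prefer the abbreviation-based argument above.
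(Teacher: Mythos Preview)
Your proof is correct and follows essentially the same approach as the paper: expand $t_{ij}t_{ji}$ from the definition (\ref{eq:tij}) and simplify using Lemma~\ref{lem:tijhelp}. The paper states this in one sentence without details, whereas you carry out the computation explicitly via the abbreviations $u,v,w$ and the identity $u-v=w$; this is exactly the simplification the paper leaves to the reader.
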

\begin{proof} To verify (\ref{eq:tt}), evaluate the left-hand side using
$(\ref{eq:tij})$ and simplify the result using Lemma
\ref{lem:tijhelp}.
\end{proof}

\begin{definition} 
\label{def:ti} 
\rm Define $
t_i = t_{01} t_{12} \cdots t_{i-1,i} $ for $0 \leq i \leq d$. We interpret $t_0 = 1$.
\end{definition}
\begin{lemma} \label{lem:nonzerot} We have
$t_i \not=0$ for $0 \leq i \leq d$.
\end{lemma}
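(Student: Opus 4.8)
The plan is to reduce the claim to the single-factor statement already established in Lemma \ref{lem:tt}. By Definition \ref{def:ti}, the scalar $t_i$ is the product $t_{01} t_{12} \cdots t_{i-1,i}$, and since $\mathbb F$ is a field, such a product is nonzero precisely when each of its factors is nonzero. Each factor appearing in $t_i$ has the form $t_{k-1,k}$ for some $1 \leq k \leq i$, so in particular $\vert (k-1)-k\vert = 1$ and Lemma \ref{lem:tt} applies to the pair $(k-1,k)$.

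First I would invoke Lemma \ref{lem:tt} in the form $t_{k-1,k}\, t_{k,k-1} = 1$. Because this product equals the multiplicative identity, neither factor can vanish; in particular $t_{k-1,k} \neq 0$ for every $k$ with $1 \leq k \leq i$. Then $t_i$ is a finite product of nonzero elements of $\mathbb F$, hence nonzero, which settles the cases $1 \leq i \leq d$. The remaining case $i = 0$ is immediate from the convention $t_0 = 1$, which is nonzero.

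There is no substantive obstacle here: the entire content is packaged into Lemma \ref{lem:tt}, and the present lemma is a formal consequence of it together with the fact that $\mathbb F$ is a field (so it has no zero divisors). The only point requiring a modicum of care is bookkeeping the index ranges, namely confirming that every factor $t_{k-1,k}$ of $t_i$ genuinely has adjacent indices so that Lemma \ref{lem:tt} is applicable, and handling the trivial base case $t_0 = 1$ separately.
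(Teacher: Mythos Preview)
Your proof is correct and follows essentially the same approach as the paper: the paper's proof simply cites Definition~\ref{def:ti} and observes that each factor $t_{01}, t_{12}, \ldots, t_{i-1,i}$ is nonzero by Lemma~\ref{lem:tt}. Your version just makes explicit why Lemma~\ref{lem:tt} forces each factor to be nonzero and separately notes the base case $t_0 = 1$.
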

\begin{proof} By Definition
\ref{def:ti} and since each of $t_{01}, t_{12}, \ldots, t_{i-1,i}$ is nonzero by
Lemma
\ref{lem:tt}.
\end{proof}
\begin{lemma} 
\label{lem:tivstj} 
For $0\leq i,j\leq d$ such that  $\vert i-j\vert \leq 1$,
\begin{align*}
t_{ij} = \frac{t_j}{t_i}.
\end{align*}
\end{lemma}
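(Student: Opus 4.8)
The plan is to argue by cases according to the three possible values of $j$ relative to $i$, namely $j=i$, $j=i+1$, and $j=i-1$, since the hypothesis $\vert i-j\vert\leq 1$ leaves exactly these options. Throughout I would rely on the telescoping nature of the definition $t_i = t_{01}t_{12}\cdots t_{i-1,i}$ from Definition \ref{def:ti}, together with the nonvanishing $t_i\neq 0$ from Lemma \ref{lem:nonzerot}, which is what makes the quotient $t_j/t_i$ meaningful.

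First I would dispose of the diagonal case $j=i$. Here the claim reads $t_{ii}=t_i/t_i=1$, which is immediate from (\ref{eq:tii}). Next, for the case $j=i+1$, the definition gives directly
\begin{align*}
t_{i+1} = t_{01}t_{12}\cdots t_{i-1,i}\,t_{i,i+1} = t_i\,t_{i,i+1},
\end{align*}
so dividing by the nonzero scalar $t_i$ yields $t_{i,i+1}=t_{i+1}/t_i$, as desired. This case needs nothing beyond the defining product.

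The one genuinely non-formal case is $j=i-1$, and I expect the crux of the argument to be the appeal to the reciprocity relation $t_{ij}t_{ji}=1$ of Lemma \ref{lem:tt}. From the definition we have $t_i = t_{i-1}\,t_{i-1,i}$, hence $t_{i-1}/t_i = t_{i-1,i}^{-1}$; here $t_{i-1,i}\neq 0$ again by Lemma \ref{lem:nonzerot} (or equivalently Lemma \ref{lem:tt}). Then I would invoke Lemma \ref{lem:tt} in the form $t_{i,i-1}t_{i-1,i}=1$ to conclude $t_{i,i-1}=t_{i-1,i}^{-1}=t_{i-1}/t_i$. Combining the three cases completes the proof. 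The only subtlety worth flagging is that the cases $j=i\pm 1$ require $1\le i$ (resp.\ $i\le d-1$) for the referenced index $t_{i-1}$ (resp.\ $t_{i+1}$) to be defined within the range $0\le i,j\le d$, but this is automatically guaranteed by the hypothesis $0\le i,j\le d$ together with $\vert i-j\vert=1$, so no separate boundary discussion is needed.
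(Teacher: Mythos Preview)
Your proof is correct and follows essentially the same approach as the paper's own proof, which simply cites (\ref{eq:tii}) for the case $i=j$ and Lemma~\ref{lem:tt} together with Definition~\ref{def:ti} for the case $\vert i-j\vert=1$. Your version spells out the telescoping and the use of $t_{ij}t_{ji}=1$ explicitly, but the underlying argument is identical.
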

\begin{proof} For $i=j$ use (\ref{eq:tii}). For $\vert i-j\vert = 1$ use Lemma
\ref{lem:tt} and Definition \ref{def:ti}.
\end{proof}

\noindent Define $H \in {\rm End}(V)$ by 
\begin{align}
\label{eq:psi}
H = \sum_{i=0}^d t_i E_i,
\end{align}
\noindent where $\lbrace t_i \rbrace_{i=0}^d$ are from Definition
\ref{def:ti}. The map $H$ is invertible and
\begin{align}
\label{eq:psiInv}
H^{-1}  = \sum_{i=0}^d t^{-1}_i E_i.
\end{align}

\begin{lemma} \label{lem:PP} The following hold on $V$:
\begin{enumerate}
\item[\rm (i)] $L(A) = H^{-1} A H$;
\item[\rm (ii)] $L(A^*) = H^{-1} A^*H$.
\end{enumerate}
\end{lemma}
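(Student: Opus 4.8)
The plan is to prove both claims by checking them on each eigenspace $V_i$ of $A$, using the diagonal form of $H$ from (\ref{eq:psi}). Part (i) is immediate: since $L(A)=A$ and $H=\sum_i t_i E_i$ is a polynomial in $A$ (it acts as the scalar $t_i$ on $V_i=E_iV$), $H$ commutes with $A$, so $H^{-1}AH=A=L(A)$. The real content is part (ii).

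For part (ii) I would show that $L(A^*)$ and $H^{-1}A^*H$ agree as elements of ${\rm End}(V)$ by comparing their $(i,j)$-blocks $E_i(\cdot)E_j$ for all $0\le i,j\le d$. On the left, the block of $H^{-1}A^*H$ is computed using (\ref{eq:psiInv}) and (\ref{eq:psi}): because $E_i H^{-1}=t_i^{-1}E_i$ and $HE_j=t_j E_j$, we get
\begin{align*}
E_i H^{-1} A^* H E_j = t_i^{-1}\,t_j\, E_i A^* E_j = \frac{t_j}{t_i}\,E_i A^* E_j.
\end{align*}
On the other side, (\ref{eq:ELE}) gives $E_i L(A^*) E_j = t_{ij}\, E_i A^* E_j$. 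So the two blocks coincide exactly when $t_{ij}=t_j/t_i$, and Lemma \ref{lem:tivstj} supplies precisely this identity for $|i-j|\le 1$.

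It remains to handle the range $|i-j|>1$, where Lemma \ref{lem:tivstj} does not directly apply. But in that range the factor $E_i A^* E_j$ already vanishes by (\ref{eq:triple}), so both blocks of $L(A^*)$ and of $H^{-1}A^*H$ are zero and trivially agree. Summing the block identity $E_i L(A^*) E_j = E_i H^{-1} A^* H E_j$ over all $i,j$ and using $I=\sum_i E_i$ then yields $L(A^*)=H^{-1}A^*H$ on $V$.

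The argument is essentially bookkeeping, so there is no deep obstacle; the one point requiring care is the case split on $|i-j|$, ensuring that the off-tridiagonal blocks are dispatched by (\ref{eq:triple}) rather than by the factorization $t_{ij}=t_j/t_i$, which is only established for $|i-j|\le 1$. Once both claims hold on $V$ and $A$, $A^*$ generate $\mathcal O_q$, the conjugation $L(X)=H^{-1}XH$ extends to all $X\in\mathcal O_q$ by multiplicativity, which is the statement promised in the introduction (though formally only parts (i), (ii) are asserted in this lemma).
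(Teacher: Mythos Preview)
Your proof is correct and follows essentially the same approach as the paper: both arguments compare the $(i,j)$-blocks $E_i(\cdot)E_j$ of $L(A^*)$ and $H^{-1}A^*H$, invoking Lemma~\ref{lem:tivstj} for $|i-j|\le 1$ and the tridiagonality (\ref{eq:triple}) for $|i-j|>1$. The paper phrases it as showing the difference $\Delta=L(A^*)-H^{-1}A^*H$ satisfies $E_i\Delta E_j=0$ for all $i,j$, but this is the same computation.
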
 
\begin{proof} (i) We show that each side is equal to $A$.
By 
(\ref{eq:Lusztig}) we have $L(A)=A$. For $0 \leq i \leq d$ we have $AE_i = \theta_i E_i = E_i A$, so $E_i$ commutes with $A$.
By this and (\ref{eq:psi}) we see that $H$ commutes with $A$. Therefore $H^{-1}AH=A$.
\\
\noindent (ii)  Define $\Delta = L(A^*) - H^{-1} A^* H$. We show that $\Delta=0$. Using $I = E_0 +\cdots + E_d$ we obtain
\begin{align*}
\Delta = I \Delta I = \sum_{i=0}^d \sum_{j=0}^d E_i \Delta E_j.
\end{align*}
For $0 \leq i,j\leq d$ we show that $E_i \Delta E_j = 0$.
Using
(\ref{eq:ELE}) and
(\ref{eq:psi}),
(\ref{eq:psiInv})
we obtain
\begin{align*}
E_i \Delta E_j = E_i A^* E_j (t_{ij} - t^{-1}_i t_j).
\end{align*}
If $\vert i-j\vert > 1$ then $E_i A^* E_j = 0$
by  (\ref{eq:triple}).
 If $\vert i-j \vert \leq 1$ then $t_{ij} = t^{-1}_i t_j$ by Lemma
 \ref{lem:tivstj}.
Therefore $E_i \Delta E_j = 0$. We have shown that $E_i \Delta E_j=0$ for $0 \leq i,j\leq d$, so $\Delta = 0$. The result follows.
\end{proof}

\begin{proposition} \label{prop:L} For  $X \in \mathcal O_q$ the following hold on $V$:
\begin{align}
 L(X) = H^{-1} X H, \qquad \qquad
 L^{-1} (X) = HX H^{-1}.
 \label{eq:LPXP}
 \end{align}
 \end{proposition}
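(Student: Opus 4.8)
The plan is to leverage Lemma~\ref{lem:PP}, which already establishes that $L$ and conjugation by $H$ agree on the two generators $A$ and $A^*$. The key observation is that $\mathcal O_q$ is generated by $A$ and $A^*$, so to show that two algebra homomorphisms $\mathcal O_q \to {\rm End}(V)$ coincide, it suffices to check that they agree on these generators. First I would observe that both maps $X \mapsto L(X)$ (acting on $V$ via the module structure, then viewed as an element of ${\rm End}(V)$) and $X \mapsto H^{-1} X H$ are algebra homomorphisms from $\mathcal O_q$ to ${\rm End}(V)$. The map $X \mapsto L(X)$ is a homomorphism because $L$ is an automorphism of $\mathcal O_q$ composed with the module action; the map $X \mapsto H^{-1} X H$ is a homomorphism because conjugation by the fixed invertible element $H$ respects sums and products.

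The main step is then a generation/induction argument. Since $A$ and $A^*$ generate $\mathcal O_q$ as an algebra, every $X \in \mathcal O_q$ is a polynomial (more precisely, a noncommutative polynomial, i.e.\ a linear combination of words) in $A$ and $A^*$. Both homomorphisms are determined by their values on $A$ and $A^*$. By Lemma~\ref{lem:PP}(i),(ii) we have $L(A) = H^{-1} A H$ and $L(A^*) = H^{-1} A^* H$ on $V$. Because both maps are homomorphisms agreeing on a generating set, they agree on all of $\mathcal O_q$, giving $L(X) = H^{-1} X H$ on $V$ for every $X \in \mathcal O_q$. This establishes the first equation in (\ref{eq:LPXP}).

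For the second equation, I would apply the first equation to $L^{-1}(X)$ in place of $X$. Since $L^{-1}(X) \in \mathcal O_q$, the first equation yields $L(L^{-1}(X)) = H^{-1} L^{-1}(X) H$ on $V$, i.e.\ $X = H^{-1} L^{-1}(X) H$. Multiplying on the left by $H$ and on the right by $H^{-1}$ gives $H X H^{-1} = L^{-1}(X)$, which is precisely the second equation. Alternatively, one could note that $L^{-1}$ is itself an automorphism of $\mathcal O_q$ and run the identical generation argument using the relations (\ref{eq:Linv}) together with the fact that conjugation by $H^{-1}$ fixes $A$ and sends $A^*$ appropriately; but the short derivation from the first equation is cleaner.

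I do not anticipate a serious obstacle here: the content is essentially a formal consequence of Lemma~\ref{lem:PP} combined with the universal property that homomorphisms out of a finitely generated algebra are determined by their action on generators. The one point requiring a moment of care is the verification that $X \mapsto H^{-1} X H$ is genuinely an algebra homomorphism on the image of $\mathcal O_q$ in ${\rm End}(V)$ --- this is immediate from $H^{-1}(XY)H = (H^{-1}XH)(H^{-1}YH)$ and linearity --- and the implicit but standard point that equalities in $\mathcal O_q$ (such as the defining relations and the formula for $L(A^*)$) pass to the corresponding equalities of operators on $V$ under the module action. With those understood, the proof is short.
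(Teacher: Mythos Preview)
Your proposal is correct and follows exactly the paper's approach: the paper's proof is the single line ``By Lemma~\ref{lem:PP} and since the algebra $\mathcal O_q$ is generated by $A$, $A^*$,'' and you have simply spelled out the standard homomorphism-on-generators argument underlying that sentence. Your derivation of the second equation from the first is a fine way to make that half explicit.
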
 
 \begin{proof}  By Lemma \ref{lem:PP}
 and since the algebra $\mathcal O_q$ is generated by $A$, $A^*$. 
\end{proof}
 
 \noindent We will discuss the implications of Proposition \ref{prop:L}, after reviewing a few concepts.
 \begin{definition}
 \label{def:mi}
 \rm Let $\mathcal A$ denote an algebra and let $W$, $W'$ denote 
 $\mathcal A$-modules. By an {\it isomorphism of $\mathcal A$-modules from $W$ to $W'$} we mean
 an $\mathbb F$-linear bijection $\gamma : W \to W'$ such that $\xi \gamma = \gamma \xi$ on $W$
 for all $\xi \in \mathcal A$. The $\mathcal A$-modules $W$, $W'$ are said to be {\it isomorphic} whenever
 there exists an isomorphism of $\mathcal A$-modules from $W$ to $W'$.
 \end{definition}

 \begin{definition}
 \label{def:twist}
 \rm Let $\sigma$ denote an automorphism of an algebra $\mathcal A$. Let $W$ denote an $\mathcal A$-module. There exists
 an $\mathcal A$-module structure on $W$, called {\it $W$ twisted via $\sigma$}, that behaves as follows: for all $\xi \in \mathcal A$ and $w \in W$,
the vector $\xi .w$ computed in $W$ twisted via $\sigma$ coincides with the vector $\sigma^{-1} (\xi).w$ computed in the original $\mathcal A$-module $W$.
Sometimes we abbreviate ${}^\sigma W$ for $W$ twisted via $\sigma$.
\end{definition}
 
 \begin{proposition}  
 \label{cor:iso}
 The following $\mathcal O_q$-modules are isomorphic:
 \begin{enumerate}
 \item[\rm (i)] the $\mathcal O_q$-module $V$;
 \item[\rm (ii)] the $\mathcal O_q$-module $V$ twisted via $L$;
  \item[\rm (iii)] the $\mathcal O_q$-module $V$ twisted via $L^{-1}$.
 \end{enumerate}
 Moreover, the map $H$ from {\rm (\ref{eq:psi})}
is an isomorphism of $\mathcal O_q$-modules from {\rm (i)} to {\rm (ii)} and from {\rm (iii)} to {\rm (i)}.
 \end{proposition}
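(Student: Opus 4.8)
The plan is to verify directly that the invertible map $H$ intertwines the relevant module actions, using the conjugation formulas from Proposition \ref{prop:L}, and then to deduce the three-way isomorphism from transitivity of the isomorphism relation.

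First I would unwind the definitions. By Definition \ref{def:twist}, the action of $X \in \mathcal O_q$ on a vector $v$ in the module $V$ twisted via $L$ (write this action as $X \cdot_L v$) is $L^{-1}(X).v$ computed in the original module $V$; likewise, in $V$ twisted via $L^{-1}$ the action is $X \cdot_{L^{-1}} v = L(X).v$. By Definition \ref{def:mi}, to show that $H$ is an isomorphism from the module in (i) to the module in (ii) it suffices to check that for all $X \in \mathcal O_q$ and all $v \in V$ we have $H(X.v) = L^{-1}(X).(Hv)$, where the left-hand side uses the original action and the right-hand side is the twisted action evaluated at $Hv$.

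Next I would carry out the two verifications. For the map from (i) to (ii): since Proposition \ref{prop:L} gives $L^{-1}(X) = HXH^{-1}$ on $V$, the right-hand side equals $(HXH^{-1})(Hv) = H(X.v)$, which is exactly the left-hand side, so the intertwining holds. For the map from (iii) to (i): here the condition to check is $H(L(X).v) = X.(Hv)$, and using $L(X) = H^{-1}XH$ from Proposition \ref{prop:L} the left-hand side becomes $H(H^{-1}XH)v = X(Hv)$, as required. Since $H$ is invertible by its definition in (\ref{eq:psi})--(\ref{eq:psiInv}), it is an $\mathbb F$-linear bijection in each case, hence a module isomorphism.

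Finally, having established that (i) is isomorphic to (ii) via $H$ and that (iii) is isomorphic to (i) via $H$, the three modules are pairwise isomorphic because module isomorphism is an equivalence relation. The only real subtlety, and the point at which I would take the most care, is the bookkeeping of the direction of the twist together with the inverse appearing in Definition \ref{def:twist}: it is easy to conflate $L$ with $L^{-1}$ or to apply a conjugation formula on the wrong side, so I would keep the original and twisted actions notationally distinct throughout the computation.
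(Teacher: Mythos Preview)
Your proof is correct and follows essentially the same approach as the paper: both arguments reduce to the two intertwining identities $L^{-1}(X)H = HX$ and $HL(X) = XH$ on $V$, derived from Proposition~\ref{prop:L}, and then invoke Definitions~\ref{def:mi} and~\ref{def:twist}. Your version simply unpacks the bookkeeping of the twisted actions more explicitly than the paper does.
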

 \begin{proof} It suffices to prove the last assertion in the proposition statement. By the construction and Proposition \ref{prop:L}, the map $H : V \to V$
is an $\mathbb F$-linear bijection such that 
$L^{-1}(X)H = HX$ and
$H L(X) = X H$
on $V$
for all $X \in \mathcal O_q$. By this and Definitions
 \ref{def:mi},  \ref{def:twist} we get the last assertion in the proposition statement.
 \end{proof}
 
 \noindent
 We mentioned below
 Lemma \ref{lem:AVsVsVs}
 that $A$, $A^*$ act on the $\mathcal O_q$-module $V$ as a tridiagonal pair. Next we express Proposition \ref{cor:iso} in terms  of tridiagonal pairs. The notion of isomorphism
 for tridiagonal pairs is given in \cite[Definition~3.1]{NomTerSharp}. 
  
 \begin{proposition}
 \label{thm:isoTD}
 The following are isomorphic tridiagonal pairs:
 \begin{enumerate}
 \item[\rm (i)] the action of $A$, $A^*$ on $V$;
 \item[\rm (ii)] the action of $A$, $L(A^*)$ on $V$;
  \item[\rm (iii)] the action of $A$, $L^{-1}(A^*)$ on $V$.
 \end{enumerate}
 Moreover, the map
 $H$ from {\rm (\ref{eq:psi})}
is an isomorphism of tridiagonal pairs from {\rm (ii)} to {\rm (i)} and from {\rm (i)} to {\rm (iii)}.
 \end{proposition}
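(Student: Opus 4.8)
The plan is to read off the result directly from Proposition \ref{prop:L} (equivalently, Proposition \ref{cor:iso}), after recalling that an isomorphism of tridiagonal pairs in the sense of \cite[Definition~3.1]{NomTerSharp} is an $\mathbb F$-linear bijection that simultaneously intertwines the first operators of the two pairs and the second operators of the two pairs. Concretely, a bijection $\gamma$ is an isomorphism of tridiagonal pairs from a source pair to a target pair exactly when $\gamma$ carries each operator of the source to the corresponding operator of the target under conjugation. So the entire task reduces to exhibiting the correct intertwining identities for $H$ and checking that the pairs in (ii) and (iii) really are tridiagonal pairs.

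First I would record that (ii) and (iii) are tridiagonal pairs. We already know from Lemmas \ref{lem:AsVVV} and \ref{lem:AVsVsVs} that the pair in (i), namely $A$, $A^*$, acts on $V$ as a tridiagonal pair. By Lemma \ref{lem:PP} we have $H A H^{-1}=A$ and $H L(A^*) H^{-1}=A^*$, so conjugation by the invertible map $H$ carries the pair in (ii) to the pair in (i). Since the defining axioms of a tridiagonal pair are preserved under conjugation by an invertible element of ${\rm End}(V)$, the pair in (ii) is a tridiagonal pair. Applying the same reasoning with $H^{-1}$, and using $L^{-1}(A^*)=H A^* H^{-1}$ from Proposition \ref{prop:L}, the pair in (iii) is a tridiagonal pair as well.

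Next I would verify the two intertwining claims. For the isomorphism from (ii) to (i): Lemma \ref{lem:PP}(i) gives $L(A)=A=H^{-1}AH$, whence $HA=AH$, so $H$ intertwines the first operators; and Lemma \ref{lem:PP}(ii) gives $L(A^*)=H^{-1}A^*H$, whence $H L(A^*)=A^* H$, so $H$ intertwines the second operators. Therefore $H$ is an isomorphism of tridiagonal pairs from (ii) to (i). For the isomorphism from (i) to (iii): again $HA=AH$, while Proposition \ref{prop:L} gives $L^{-1}(A^*)=H A^* H^{-1}$, whence $H A^*=L^{-1}(A^*)H$, so $H$ intertwines the second operators. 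Therefore $H$ is an isomorphism of tridiagonal pairs from (i) to (iii). Composing these two isomorphisms (and their inverses) shows that (i), (ii), (iii) are mutually isomorphic.

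This argument is essentially a translation of Proposition \ref{cor:iso} into the language of tridiagonal pairs, so I do not anticipate a genuine obstacle. The only points requiring care are bookkeeping the directions of the intertwiners to match the orientation convention of \cite[Definition~3.1]{NomTerSharp}, and confirming explicitly that conjugation by $H^{\pm 1}$ preserves the tridiagonal-pair axioms so that (ii) and (iii) qualify as tridiagonal pairs in the first place; both are routine once the identities $HA=AH$, $H L(A^*)=A^* H$, and $H A^*=L^{-1}(A^*)H$ are in hand.
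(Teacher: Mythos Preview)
Your argument is correct and is precisely the spelling-out of what the paper intends: the paper gives no separate proof of this proposition, introducing it as the tridiagonal-pair reformulation of Proposition~\ref{cor:iso}, and your intertwining identities $HA=AH$, $HL(A^*)=A^*H$, $HA^*=L^{-1}(A^*)H$ together with the observation that conjugation preserves the tridiagonal-pair axioms are exactly the routine checks needed to make that reformulation explicit.
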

 
 \noindent We emphasize a few points about $L(A^*)$ and $L^{-1}(A^*)$.
 \begin{lemma} 
 \label{lem:LAs} For $\varepsilon \in \lbrace 1,-1\rbrace $
 the element $L ^\varepsilon (A^*)$ is diagonalizable on $V$, with eigenvalues
 $\lbrace \theta^*_i\rbrace_{i=0}^d$ and $\theta^*_i$-eigenspace $H^{-\varepsilon}V^*_i$ 
  for $0 \leq i \leq d$.
 \end{lemma}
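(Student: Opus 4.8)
The plan is to establish Lemma \ref{lem:LAs} by using the conjugation relation from Proposition \ref{prop:L} to transport the known spectral decomposition of $A^*$ over to $L^{\varepsilon}(A^*)$. First I would recall from Lemma \ref{lem:AVsVsVs} that $A^*$ is diagonalizable on $V$ with eigenspaces $\lbrace V^*_i\rbrace_{i=0}^d$ and associated eigenvalues $\lbrace \theta^*_i\rbrace_{i=0}^d$, so that for each $i$ we have $A^* w = \theta^*_i w$ for all $w \in V^*_i$, together with the direct sum decomposition $V = \sum_{i=0}^d V^*_i$. The heart of the argument is that conjugation by an invertible map preserves diagonalizability and preserves the eigenvalues while transporting the eigenspaces.

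The key computation is as follows. By Proposition \ref{prop:L}, taking $X = A^*$, we have $L(A^*) = H^{-1} A^* H$ on $V$, and similarly $L^{-1}(A^*) = H A^* H^{-1}$. I would unify these by writing $L^{\varepsilon}(A^*) = H^{-\varepsilon} A^* H^{\varepsilon}$ for $\varepsilon \in \lbrace 1, -1\rbrace$; this matches Proposition \ref{prop:L} with the sign bookkeeping handled by noting $H^{-(-1)} = H$ and $H^{(-1)} = H^{-1}$. Now fix $i$ with $0 \leq i \leq d$ and take any nonzero $w \in H^{-\varepsilon} V^*_i$, say $w = H^{-\varepsilon} v$ with $v \in V^*_i$. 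Then
\begin{align*}
L^{\varepsilon}(A^*) w = H^{-\varepsilon} A^* H^{\varepsilon} H^{-\varepsilon} v = H^{-\varepsilon} A^* v = H^{-\varepsilon} \theta^*_i v = \theta^*_i H^{-\varepsilon} v = \theta^*_i w.
\end{align*}
Thus $H^{-\varepsilon} V^*_i$ is contained in the $\theta^*_i$-eigenspace of $L^{\varepsilon}(A^*)$. Because $H^{\varepsilon}$ is invertible (from the displayed inverse formula near \eqref{eq:psiInv}), it maps the direct sum decomposition $V = \sum_{i=0}^d V^*_i$ to the direct sum decomposition $V = \sum_{i=0}^d H^{-\varepsilon} V^*_i$, with $\dim H^{-\varepsilon} V^*_i = \dim V^*_i$ for each $i$.

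To finish, I would argue that these containments are in fact equalities and that $L^{\varepsilon}(A^*)$ is diagonalizable with exactly the claimed data. Since the $\theta^*_i$ are mutually distinct and the subspaces $H^{-\varepsilon}V^*_i$ span $V$ with $\sum_i \dim H^{-\varepsilon}V^*_i = \dim V$, the standard fact that eigenspaces for distinct eigenvalues are independent forces each $H^{-\varepsilon}V^*_i$ to equal the full $\theta^*_i$-eigenspace of $L^{\varepsilon}(A^*)$, and forces $L^{\varepsilon}(A^*)$ to be diagonalizable with eigenvalue set exactly $\lbrace \theta^*_i\rbrace_{i=0}^d$. I do not expect a genuine obstacle here; the only point requiring care is the sign convention relating $\varepsilon$ to the exponent on $H$, so that the $\theta^*_i$-eigenspace comes out as $H^{-\varepsilon}V^*_i$ rather than $H^{\varepsilon}V^*_i$, which I would double-check against the two displayed formulas in \eqref{eq:LPXP}. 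This is the most delicate bookkeeping step, but it is entirely routine once the conjugation identity is written in the unified form above.
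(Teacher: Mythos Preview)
Your proposal is correct and takes essentially the same approach as the paper: the paper's proof simply notes that $A^*$ is diagonalizable with the given data and then invokes Proposition~\ref{prop:L}, whereas you spell out explicitly the standard conjugation argument that this invocation is hiding. The sign bookkeeping you flag is handled correctly.
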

 \begin{proof} By construction the element $A^*$ is diagonalizable on $V$, with eigenvalues
 $\lbrace \theta^*_i\rbrace_{i=0}^d$ and $\theta^*_i$-eigenspace $V^*_i$ for $0 \leq i \leq d$.
The result follows from this and Proposition
\ref{prop:L} .
 \end{proof}
 
 \begin{definition}
 \label{def:shortnot}
\rm For notational convenience,  we abbreviate $V^+_i = H^{-1} V^*_i$ and $V^-_i = HV^*_i$ for $0 \leq i \leq d$.
\end{definition}

 \noindent Our next goal for this section is to develop some formulas concerning $H$ that will be used in
 later sections.
 
 \begin{lemma}
 \label{lem:tijaq}
 We have
 \begin{align}
 t_{i-1,i} = a^2 q^{2(d-2i+1)} \qquad \qquad (1 \leq i \leq d).
 \end{align}
 \end{lemma}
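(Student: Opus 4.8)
The plan is to prove the identity by direct substitution of the closed form for the eigenvalues from Lemma \ref{lem:form} into the definition \eqref{eq:tij} of $t_{ij}$, specialized to $(i,j)=(i-1,i)$, and then to simplify. Recall that $t_{i-1,i}$ is built from the two factors $(\theta_{i-1}-\theta_i)/(q-q^{-1})$ and $(q\theta_{i-1}-q^{-1}\theta_i)/(q^2-q^{-2})$, together with the additive $1$. Using $\theta_i = a q^{d-2i}+a^{-1}q^{2i-d}$, I would compute each of these two factors separately and then combine them.

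For the second factor the key observation is a cancellation: when one expands $q\theta_{i-1}-q^{-1}\theta_i$, the two terms carrying $a^{-1}$ have the common exponent $2i-d-1$ and cancel, so that
\[
q\theta_{i-1}-q^{-1}\theta_i = a\,q^{d-2i-1}(q^4-1).
\]
Dividing by $q^2-q^{-2}=q^{-2}(q^4-1)$ then collapses this to $a\,q^{d-2i+1}$. For the first factor I would pull $q^2-1=q(q-q^{-1})$ and $q^{-2}-1=-q^{-1}(q-q^{-1})$ out of $\theta_{i-1}-\theta_i$, obtaining
\[
\frac{\theta_{i-1}-\theta_i}{q-q^{-1}} = a\,q^{d-2i+1} - a^{-1}q^{2i-d-1}.
\]

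Multiplying the two factors produces a cross term $-a^{-1}a\,q^{(2i-d-1)+(d-2i+1)}=-1$, which cancels precisely against the additive $1$ in \eqref{eq:tij}, leaving $t_{i-1,i}=a^2 q^{2(d-2i+1)}$, as claimed. The computation is entirely routine; the only place demanding care is the exponent bookkeeping, and in particular spotting the cancellation of the $a^{-1}$ contribution in the second factor, which is what forces the answer to be a single monomial in $a$ and $q$. No separate treatment of the range $1\le i\le d$ is needed, since the substitution is valid for each such $i$.
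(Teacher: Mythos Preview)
Your proof is correct and follows exactly the approach indicated in the paper, which simply cites Lemma~\ref{lem:form} and \eqref{eq:tij}; you have merely written out the substitution and simplification in full detail, including the nice observation that the $a^{-1}$ terms cancel in $q\theta_{i-1}-q^{-1}\theta_i$ and that the resulting cross term $-1$ kills the additive $1$.
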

 \begin{proof} Use  Lemma  \ref{lem:form} and (\ref{eq:tij}).
 \end{proof}
 
 \begin{lemma} 
 \label{lem:tiform}
 We have
 \begin{align}
 t_i = a^{2i} q^{2i(d-i)} \qquad \qquad (0 \leq i \leq d).
 \label{eq:tiform}
 \end{align}
 \end{lemma}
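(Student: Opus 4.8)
The plan is to compute $t_i$ directly from its definition as a telescoping product, inserting the closed form for each factor that was just established in Lemma \ref{lem:tijaq}. By Definition \ref{def:ti} we have $t_i = t_{01}t_{12}\cdots t_{i-1,i}$ (with the convention $t_0=1$), so
\begin{align*}
t_i = \prod_{k=1}^{i} t_{k-1,k} = \prod_{k=1}^{i} a^2 q^{2(d-2k+1)},
\end{align*}
using Lemma \ref{lem:tijaq} for each factor. The $i$ copies of $a^2$ collect to give $a^{2i}$, so the entire content of the proof is the evaluation of the accumulated exponent of $q$.

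Next I would evaluate that exponent as an arithmetic series:
\begin{align*}
\sum_{k=1}^{i} 2(d-2k+1) = 2\Bigl( i(d+1) - 2\cdot \tfrac{i(i+1)}{2}\Bigr) = 2\bigl(i(d+1) - i(i+1)\bigr) = 2i(d-i).
\end{align*}
Combining this with the factor $a^{2i}$ yields $t_i = a^{2i} q^{2i(d-i)}$, which is exactly the claimed formula \eqref{eq:tiform}, valid for $0\le i\le d$ and consistent with the convention $t_0=1$ at $i=0$.

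An equally clean alternative is induction on $i$. The base case $i=0$ reads $t_0 = 1 = a^0 q^0$. For the inductive step one writes $t_i = t_{i-1}\, t_{i-1,i}$ and substitutes the inductive hypothesis together with Lemma \ref{lem:tijaq}; the only thing to check is that the $q$-exponents add correctly, i.e. $2(i-1)(d-i+1) + 2(d-2i+1) = 2i(d-i)$, which follows since $(i-1)(d-i+1) + (d-2i+1) = i(d-i)$ after expansion.

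In either formulation there is no genuine obstacle: the structural facts (the product definition of $t_i$ and the per-factor evaluation) are already in hand, so the statement reduces to a single routine exponent computation. The one point worth stating carefully is the arithmetic-series summation (or the inductive exponent identity), since that is where all the $i$-dependence is produced; everything else is bookkeeping.
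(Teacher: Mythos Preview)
Your proof is correct and follows exactly the approach indicated in the paper, which simply says to use Definition~\ref{def:ti} and Lemma~\ref{lem:tijaq}; you have merely written out the routine exponent computation that the paper leaves implicit.
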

 \begin{proof} Use Definition
 \ref{def:ti} and
 Lemma \ref{lem:tijaq}.
 \end{proof}
 \noindent We recall some notation. For $z,t \in \mathbb F$,
 \begin{align*}
 (z,t)_n = (1-z)(1-zt) \cdots (1-zt^{n-1})
 \qquad \qquad (n \in \mathbb N).
 \end{align*}
 We will be discussing basic hypergeometric series, using the notation of
 \cite{gr,koekoek}.

 \begin{lemma}
 \label{lem:sum}
 For $0 \leq r\leq s \leq d$,
 \begin{align}
 \label{eq:chu}
 \frac{t_s}{t_r} &= \sum_{i=0}^{s-r} \frac{ a^i q^{i(d-2r)} (\theta_s - \theta_r)(\theta_s - \theta_{r+1} )\cdots (\theta_s - \theta_{r+i-1})}{(q^2;q^2)_i},
 \\
 \frac{t_r}{t_s} &= \sum_{i=0}^{s-r} \frac{ a^{-i} q^{i(2r-d)} (\theta_s - \theta_r)(\theta_s - \theta_{r+1} )\cdots (\theta_s - \theta_{r+i-1})}{(q^{-2};q^{-2})_i}.
\label{eq:chuInv}
 \end{align}
 \end{lemma}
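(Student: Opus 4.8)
The plan is to reduce each identity to the Chu--Vandermonde summation formula for basic hypergeometric series, after substituting the explicit closed forms already available. First I would record the target value of the left-hand sides: by Lemma~\ref{lem:tiform} we have $t_i = a^{2i}q^{2i(d-i)}$, so a one-line computation gives $t_s/t_r = a^{2(s-r)}q^{2(s-r)(d-s-r)}$, and $t_r/t_s$ is its reciprocal. Thus each sum should evaluate to a single monomial in $a$ and $q$, and the whole task is to recognize the sum on the right as a summable series.

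Next I would factor the differences in the products using $\theta_i = a q^{d-2i}+a^{-1}q^{2i-d}$ from Lemma~\ref{lem:form}. A direct calculation yields $\theta_s-\theta_j = (1-q^{2(s-j)})(a q^{d-2s}-a^{-1}q^{2j-d})$ for all $j$. Setting $p=q^2$ and $n=s-r$, and applying this with $j=r,r+1,\dots,r+i-1$, the product $\prod_{k=0}^{i-1}(\theta_s-\theta_{r+k})$ splits into two $p$-shifted factorials: the factors $1-q^{2(s-r-k)}$ produce $(p^{-n};p)_i$ up to the explicit sign and power $(-1)^i q^{2in-i(i-1)}$, while the factors $a q^{d-2s}-a^{-1}q^{2(r+k)-d}$ produce $(a^{-2}p^{r+s-d};p)_i$ up to a power of $(aq^{d-2s})^i$. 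Collecting these against the prefactor $a^i q^{i(d-2r)}/(q^2;q^2)_i$, I expect the first sum to collapse to a single terminating basic hypergeometric series in base $p$.

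I would then identify this series as the one summed by Chu--Vandermonde. Writing $b=a^{-2}p^{r+s-d}$, the simplification should put the first sum in the form $\sum_{i=0}^{n}\frac{(p^{-n};p)_i\,(b;p)_i}{(p;p)_i}(-1)^i p^{-\binom{i}{2}}(p^n/b)^i$, i.e. the terminating ${}_2\phi_0(p^{-n},b;-;p,p^n/b)$. This is the confluent $c\to\infty$ limit of the second $q$-Chu--Vandermonde formula and evaluates to $b^{-n}$; substituting $b$ back gives $b^{-n}=a^{2n}q^{2n(d-s-r)}$, which matches $t_s/t_r$ and establishes the first identity.

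Finally, I would derive the second identity from the first by symmetry rather than repeating the calculation. Each $\theta_j=a q^{d-2j}+a^{-1}q^{2j-d}$, and hence every product $\prod_{k}(\theta_s-\theta_{r+k})$, is invariant under the substitution $(a,q)\mapsto(a^{-1},q^{-1})$, while $t_i=a^{2i}q^{2i(d-i)}$ is inverted, $(q^2;q^2)_i\mapsto(q^{-2};q^{-2})_i$, and $a^i q^{i(d-2r)}\mapsto a^{-i}q^{i(2r-d)}$. Applying this substitution to the first identity turns it precisely into the second. I expect the main obstacle to be the purely bookkeeping task of tracking the signs and powers of $q$ generated when the product of differences is rewritten as standard $p$-shifted factorials, and of matching the resulting argument exactly to the value $p^n/b$ required by Chu--Vandermonde; once the series is brought into standard form, the summation itself is immediate.
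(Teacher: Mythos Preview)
Your proposal is correct and follows essentially the same approach as the paper: substitute the closed forms for $t_i$ and $\theta_i$, recognize the resulting sum as a Chu--Vandermonde summation, and deduce the second identity from the first via $(a,q)\mapsto(a^{-1},q^{-1})$. The only cosmetic difference is that you package the series as a terminating ${}_2\phi_0$ in base $q^2$ (the $c\to\infty$ limit of Chu--Vandermonde) whereas the paper writes it as a ${}_2\phi_1$ with lower parameter $0$ in base $q^{-2}$; these are reparametrizations of one another, and your more explicit factorization of $\theta_s-\theta_j$ is exactly the bookkeeping the paper leaves to the reader.
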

 \begin{proof} To verify
 (\ref{eq:chu}), 
 evaluate the left-hand side using Lemma \ref{lem:tiform}
 and the right-hand side using Lemma
  \ref{lem:form}. The result becomes  a special case of the basic
  Chu/Vandermonde summation formula \cite[p.~354]{gr}:
  \begin{align*}
  a^{2s-2r}q^{2(s-r)(d-r-s)} = 
 {}_2\phi_1 \biggl(
 \genfrac{}{}{0pt}{}
 {q^{2s-2r}, a^2 q^{2d-2r-2s}}
  {0 }
  \,\bigg\vert \, q^{-2};  q^{-2} \biggr).
\end{align*}
We have verified 
(\ref{eq:chu}). To obtain (\ref{eq:chuInv}) from (\ref{eq:chu}), replace $q\mapsto q^{-1}$ and $a\mapsto a^{-1}$.
 \end{proof}
 
 \noindent Recall the eigenspaces $\lbrace V_i \rbrace_{i=0}^d $ for $A$ on $V$.
 \begin{proposition} 
 \label{prop:CVP}
 For $0 \leq r \leq d$ the following holds on $V_r + V_{r+1} + \cdots + V_d$:
 \begin{align}
 \label{eq:PsiExpand}
 H &= t_r \sum_{i=0}^{d-r} \frac{ a^i q^{i(d-2r)}(A-\theta_r I )( A-\theta_{r+1}I ) \cdots (A-\theta_{r+i-1}I)}{(q^2;q^2)_i},
 \\
  \label{eq:PsiExpandInv}
 H^{-1} &= t^{-1}_r \sum_{i=0}^{d-r} \frac{ a^{-i} q^{i(2r-d)}(A-\theta_r I )( A-\theta_{r+1}I ) \cdots (A-\theta_{r+i-1}I)}{(q^{-2};q^{-2})_i}.
 \end{align}
 \end{proposition}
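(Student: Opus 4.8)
The plan is to verify each identity eigenspace by eigenspace. By (\ref{eq:psi}), the map $H$ is diagonal with respect to the eigenspace decomposition $V = V_0 + V_1 + \cdots + V_d$ for $A$, while the right-hand side of (\ref{eq:PsiExpand}) is a polynomial in $A$ and hence also diagonal with respect to this decomposition. Since the subspace $V_r + V_{r+1} + \cdots + V_d$ is the direct sum of the eigenspaces $V_s$ for $r \leq s \leq d$, it suffices to show that the two sides of (\ref{eq:PsiExpand}) act as the same scalar on each such $V_s$.

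First I would record that, by (\ref{eq:psi}), the map $H$ acts on $V_s$ as the scalar $t_s$. Next I would evaluate the right-hand side on $V_s$. Since $A$ acts on $V_s$ as $\theta_s I$, this right-hand side acts as
\[
t_r \sum_{i=0}^{d-r} \frac{ a^i q^{i(d-2r)}(\theta_s-\theta_r)( \theta_s-\theta_{r+1}) \cdots (\theta_s-\theta_{r+i-1})}{(q^2;q^2)_i}.
\]
The key observation is that whenever $i > s-r$, the product $(\theta_s - \theta_r)(\theta_s - \theta_{r+1}) \cdots (\theta_s - \theta_{r+i-1})$ contains the factor $\theta_s - \theta_s = 0$, so the corresponding term vanishes. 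Hence the sum truncates at $i = s-r$, leaving exactly $t_r$ times the right-hand side of (\ref{eq:chu}). By Lemma \ref{lem:sum} this equals $t_r \cdot (t_s/t_r) = t_s$. Thus both sides act as $t_s$ on $V_s$, and (\ref{eq:PsiExpand}) follows.

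For (\ref{eq:PsiExpandInv}) I would argue in the same way. By (\ref{eq:psiInv}), the map $H^{-1}$ acts on $V_s$ as $t_s^{-1}$. Substituting $A = \theta_s I$ into the right-hand side of (\ref{eq:PsiExpandInv}) and applying the same truncation at $i = s-r$, the surviving sum is $t_r^{-1}$ times the right-hand side of (\ref{eq:chuInv}), which by Lemma \ref{lem:sum} equals $t_r^{-1} \cdot (t_r/t_s) = t_s^{-1}$. Hence both sides act as $t_s^{-1}$ on $V_s$, giving (\ref{eq:PsiExpandInv}).

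The argument is essentially mechanical once the truncation is noticed, so I do not expect a serious obstacle. The only point requiring care is the bookkeeping: one must check that upon specializing $A$ to $\theta_s$, the summation range $0 \leq i \leq d-r$ appearing in the proposition collapses precisely to the range $0 \leq i \leq s-r$ appearing in Lemma \ref{lem:sum}, so that the Chu/Vandermonde evaluation recorded there can be invoked verbatim.
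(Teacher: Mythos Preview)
Your proposal is correct and follows essentially the same approach as the paper: verify that both sides of each identity act by the same scalar on every eigenspace $V_s$ with $r \leq s \leq d$, invoking (\ref{eq:psi}), (\ref{eq:psiInv}) for the left-hand sides and Lemma~\ref{lem:sum} for the right-hand sides. Your explicit remark about the truncation of the sum at $i = s - r$ makes transparent a step the paper leaves implicit.
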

 \begin{proof} To verify (\ref{eq:PsiExpand}), use
 (\ref{eq:psi}), (\ref{eq:chu})
 to see that 
 for $r \leq s \leq d$ the $V_s$-eigenvalue for either side of
 (\ref{eq:PsiExpand}) is equal to $t_s$. We have verified 
 (\ref{eq:PsiExpand}).
 To verify (\ref{eq:PsiExpandInv}), use
 (\ref{eq:psiInv}), (\ref{eq:chuInv})
 to see that 
 for $r \leq s \leq d$ the $V_s$-eigenvalue for either side of
 (\ref{eq:PsiExpandInv}) is equal to $t^{-1}_s$. We have verified 
 (\ref{eq:PsiExpandInv}).
 \end{proof}
 
 \begin{proposition} 
 \label{cor:CVP}
 The following holds on $V$:
 \begin{align*}
 H&=  \sum_{i=0}^{d} \frac{ a^i q^{id}(A-\theta_0 I )( A-\theta_{1}I ) \cdots (A-\theta_{i-1}I)}{(q^2;q^2)_i},
 \\
 H^{-1} &= \sum_{i=0}^{d} \frac{ a^{-i} q^{-id}(A-\theta_0 I )( A-\theta_{1}I ) \cdots (A-\theta_{i-1}I)}{(q^{-2};q^{-2})_i}.
 \end{align*}
 \end{proposition}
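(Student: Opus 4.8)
The plan is to recognize that this proposition is precisely the special case $r=0$ of Proposition \ref{prop:CVP}, so essentially no new work is required. First I would set $r=0$ in both displays of Proposition \ref{prop:CVP}. On the left-hand sides, the relevant subspace is $V_0 + V_1 + \cdots + V_d$; since $\lbrace V_i\rbrace_{i=0}^d$ are the eigenspaces for $A$ on $V$ and $A$ is diagonalizable, this sum is a direct sum equal to all of $V$. Hence for $r=0$ the identities in Proposition \ref{prop:CVP} hold on the whole module $V$, which is exactly what we want to establish.

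Next I would simplify the scalar factors. By Definition \ref{def:ti} we have $t_0 = 1$, so the prefactors $t_r$ and $t_r^{-1}$ in (\ref{eq:PsiExpand}) and (\ref{eq:PsiExpandInv}) both reduce to $1$ when $r=0$. Likewise the exponents $q^{i(d-2r)}$ and $q^{i(2r-d)}$ become $q^{id}$ and $q^{-id}$ respectively, and the products $(A-\theta_r I)\cdots(A-\theta_{r+i-1}I)$ become $(A-\theta_0 I)\cdots(A-\theta_{i-1}I)$. After these substitutions the two sums in Proposition \ref{prop:CVP} become verbatim the two sums asserted here, completing the argument.

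There is no genuine obstacle: the analytic content — the Chu/Vandermonde evaluation and the eigenvalue-by-eigenvalue comparison — was already carried out in the proofs of Lemma \ref{lem:sum} and Proposition \ref{prop:CVP}. The only point meriting a sentence of comment is the observation that $V_0 + V_1 + \cdots + V_d = V$, which justifies upgrading the identities from the subspace $V_r + \cdots + V_d$ to all of $V$ once $r=0$ is chosen.
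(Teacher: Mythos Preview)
Your proposal is correct and follows exactly the paper's own approach: the paper's proof consists of the single sentence ``Set $r=0$ in Proposition~\ref{prop:CVP}.'' Your additional remarks about $t_0=1$ and $V_0+\cdots+V_d=V$ simply make explicit what that specialization entails.
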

 \begin{proof} Set $r=0$ in Proposition  \ref{prop:CVP}.
 \end{proof}

 \noindent We mention a variation on Lemma 
 \ref{lem:sum}
 and 
 Propositions  \ref{prop:CVP}, \ref{cor:CVP}.

 \begin{lemma}
 \label{lem:sumVar}
 For $0 \leq r\leq s \leq d$,
 \begin{align}
 \label{eq:chuVar}
 \frac{t_r}{t_s} &= \sum_{i=0}^{s-r} \frac{ a^{-i} q^{i(2s-d)} (\theta_r - \theta_s)(\theta_r - \theta_{s-1} )\cdots (\theta_r - \theta_{s-i+1})}{(q^2;q^2)_i},
 \\
 \frac{t_s}{t_r} &= \sum_{i=0}^{s-r} \frac{ a^{i} q^{i(d-2s)} (\theta_r - \theta_s)(\theta_r - \theta_{s-1} )\cdots (\theta_r - \theta_{s-i+1})}{(q^{-2};q^{-2})_i}.
\label{eq:chuInvVar}
 \end{align}
 \end{lemma}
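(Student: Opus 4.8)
The plan is to mirror the proof of Lemma~\ref{lem:sum}, reducing each identity to the same terminating Chu/Vandermonde summation from \cite{gr} but with shifted parameters. First I would evaluate the left-hand side of (\ref{eq:chuVar}) using the closed form $t_i = a^{2i}q^{2i(d-i)}$ from Lemma~\ref{lem:tiform}, obtaining $t_r/t_s = a^{-2(s-r)}q^{2(s-r)(r+s-d)}$. This is the target value that the right-hand side must reproduce.

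Next I would expand the right-hand side using the explicit eigenvalues $\theta_i = aq^{d-2i}+a^{-1}q^{2i-d}$ from Lemma~\ref{lem:form}. The key computation is to factor each difference as $\theta_r - \theta_{s-j} = (1 - q^{2(r-s)+2j})(aq^{d-2r} - a^{-1}q^{2(s-j)-d})$, so that the product $\prod_{j=0}^{i-1}(\theta_r - \theta_{s-j})$ separates into a base-$q^2$ Pochhammer symbol $(q^{2(r-s)};q^2)_i$ times a base-$q^{-2}$ Pochhammer symbol $(a^{-2}q^{2(r+s-d)};q^{-2})_i$, up to an explicit power of $a$ and $q$. I would then use the base-reversal identity $(x;q^2)_i = (x^{-1};q^{-2})_i(-x)^i q^{i(i-1)}$ on both $(q^{2(r-s)};q^2)_i$ and the denominator $(q^2;q^2)_i$ to put the whole summand over the single base $q^{-2}$; after cancelling the resulting $(-1)^i$ and $q^{i(i-1)}$ factors and collecting the scalar powers, the sum collapses to the terminating series ${}_{2}\phi_{1}(q^{2(s-r)},a^{-2}q^{2(r+s-d)};0;q^{-2};q^{-2})$.

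Applying the Chu/Vandermonde evaluation (the $c\to 0$ specialization ${}_{2}\phi_{1}(q^{-n},b;0;p;p)=b^n$, exactly as in the proof of Lemma~\ref{lem:sum}) with $n=s-r$ and $b=a^{-2}q^{2(r+s-d)}$, the right-hand side equals $(a^{-2}q^{2(r+s-d)})^{s-r}=a^{-2(s-r)}q^{2(s-r)(r+s-d)}$, which matches the value of $t_r/t_s$ found above; this proves (\ref{eq:chuVar}). To obtain (\ref{eq:chuInvVar}) I would apply the substitution $q\mapsto q^{-1}$, $a\mapsto a^{-1}$, under which each $\theta_i$ is fixed while $t_i\mapsto t_i^{-1}$ (so $t_r/t_s\mapsto t_s/t_r$), the denominators $(q^2;q^2)_i\mapsto(q^{-2};q^{-2})_i$, and the coefficient $a^{-i}q^{i(2s-d)}\mapsto a^{i}q^{i(d-2s)}$, converting (\ref{eq:chuVar}) into (\ref{eq:chuInvVar}); this is the same device used to pass from (\ref{eq:chu}) to (\ref{eq:chuInv}).

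As an alternative that avoids recomputing the hypergeometric sum, I would observe that the claim is Lemma~\ref{lem:sum} applied to the reversed eigenvalue ordering $\theta_i^\vee=\theta_{d-i}$, which again has the form of Lemma~\ref{lem:form} but with $a$ replaced by $a^{-1}$; translating the indices via $r\mapsto d-s$, $s\mapsto d-r$ turns the products $(\theta^\vee_{d-r}-\theta^\vee_{d-s+j})$ into $(\theta_r-\theta_{s-j})$ and, using $t^\vee_i=a^{-2i}q^{2i(d-i)}$, turns the left-hand ratio back into $t_r/t_s$. The main obstacle in the primary approach is purely bookkeeping: keeping the two Pochhammer bases straight through the base-reversal identity and confirming that the $a$- and $q$-exponents of $b^n$ agree \emph{on the nose} with the closed form for $t_r/t_s$. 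The reversal argument is worth carrying along as a cross-check, since a single sign slip in $-2(s-r)$ versus $2(s-r)$ would go undetected otherwise.
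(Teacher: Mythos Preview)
Your proposal is correct and follows essentially the same route as the paper: you evaluate $t_r/t_s$ via Lemma~\ref{lem:tiform}, expand the product of differences via Lemma~\ref{lem:form}, reduce to the same terminating Chu/Vandermonde identity ${}_2\phi_1(q^{2(s-r)},a^{-2}q^{2(r+s-d)};0;q^{-2};q^{-2}) = a^{2r-2s}q^{2(r-s)(d-r-s)}$, and then obtain (\ref{eq:chuInvVar}) from (\ref{eq:chuVar}) by the substitution $q\mapsto q^{-1}$, $a\mapsto a^{-1}$. The paper's proof is the same, just with the intermediate algebra (factorization of $\theta_r-\theta_{s-j}$, base-reversal) left implicit; your alternative reversal-of-ordering argument is a nice extra sanity check but not needed.
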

 \begin{proof} 
 To verify
 (\ref{eq:chuVar}), 
 evaluate the left-hand side using Lemma \ref{lem:tiform}
 and the right-hand side using Lemma
  \ref{lem:form}. The result becomes  a special case of the basic
  Chu/Vandermonde summation formula \cite[p.~354]{gr}:

  \begin{align*}
  a^{2r-2s}q^{2(r-s)(d-r-s)} = 
 {}_2\phi_1 \biggl(
 \genfrac{}{}{0pt}{}
 {q^{2s-2r}, a^{-2} q^{2r+2s-2d}}
  {0 }
  \,\bigg\vert \, q^{-2};  q^{-2} \biggr).
\end{align*}
We have verified 
(\ref{eq:chuVar}). To obtain (\ref{eq:chuInvVar}) from (\ref{eq:chuVar}), replace $q\mapsto q^{-1}$ and $a\mapsto a^{-1}$.
 \end{proof}

 \begin{proposition} 
 \label{prop:CVPVar}
 For $0 \leq s \leq d$ the following holds on $V_0 + V_{1} + \cdots + V_s$:
 \begin{align*}
 H &= t_s \sum_{i=0}^{s} \frac{ a^{-i} q^{i(2s-d)}(A-\theta_s I )( A-\theta_{s-1}I ) \cdots (A-\theta_{s-i+1}I)}{(q^2;q^2)_i},
 \\
 H^{-1} &= t^{-1}_s \sum_{i=0}^{s} \frac{ a^{i} q^{i(d-2s)}(A-\theta_s I )( A-\theta_{s-1}I ) \cdots (A-\theta_{s-i+1}I)}{(q^{-2};q^{-2})_i}.
 \end{align*}
 \end{proposition}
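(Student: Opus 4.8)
The plan is to follow exactly the template used for Proposition \ref{prop:CVP}, exploiting that $A$, $H$, $H^{-1}$ are simultaneously diagonalized by the common eigenspace decomposition $\lbrace V_j \rbrace_{j=0}^d$. Since both displayed equations assert equality of elements of ${\rm End}(V)$ on the subspace $V_0 + V_1 + \cdots + V_s$, and since each side preserves every $V_r$ with $0 \leq r \leq s$ (the right-hand sides are polynomials in $A$, hence act as scalars on each $V_r$), it suffices to check that the two sides agree as scalars on each $V_r$ for $0 \leq r \leq s$. Recall from (\ref{eq:psi}) that $H$ acts on $V_r$ as $t_r$, and from (\ref{eq:psiInv}) that $H^{-1}$ acts on $V_r$ as $t_r^{-1}$.

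First I would treat the formula for $H$. On $V_r$ the map $A$ acts as $\theta_r I$, so the right-hand side acts as
\begin{align*}
t_s \sum_{i=0}^{s} \frac{ a^{-i} q^{i(2s-d)}(\theta_r-\theta_s )( \theta_r-\theta_{s-1} ) \cdots (\theta_r-\theta_{s-i+1})}{(q^2;q^2)_i}.
\end{align*}
The crucial observation is that this sum, which formally runs to $i=s$, agrees with the sum to $i = s-r$ appearing in Lemma \ref{lem:sumVar}, equation (\ref{eq:chuVar}). Indeed, for $i \geq s-r+1$ the product $(\theta_r-\theta_s)(\theta_r-\theta_{s-1})\cdots(\theta_r-\theta_{s-i+1})$ ranges over the indices $s, s-1, \ldots, s-i+1$, and this range includes the index $r$, so the vanishing factor $\theta_r - \theta_r = 0$ occurs; hence every such term is zero. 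Therefore the displayed scalar equals $t_s \cdot (t_r/t_s) = t_r$ by (\ref{eq:chuVar}), which matches the action of $H$ on $V_r$.

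The formula for $H^{-1}$ is entirely parallel: on $V_r$ the right-hand side acts as $t_s^{-1}$ times the sum in (\ref{eq:chuInvVar}) with $A$ replaced by $\theta_r$, and again the terms with $i > s-r$ vanish for the same reason, so the scalar equals $t_s^{-1} \cdot (t_s/t_r) = t_r^{-1}$, matching the action of $H^{-1}$ on $V_r$. Since these identities hold for every $r$ with $0 \leq r \leq s$, and the spaces $\lbrace V_r \rbrace_{r=0}^s$ span $V_0 + V_1 + \cdots + V_s$, both displayed equations hold on that subspace. I do not anticipate any genuine obstacle here; the only point requiring care is the bookkeeping that justifies extending the summation range from $s-r$ to $s$, namely verifying that the extra terms contain the factor $\theta_r - \theta_r$ and so drop out.
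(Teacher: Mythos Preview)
Your proposal is correct and follows essentially the same approach as the paper, which simply says ``Similar to the proof of Proposition \ref{prop:CVP}'': check that on each $V_r$ with $0\leq r\leq s$ both sides act as the scalar $t_r$ (resp.\ $t_r^{-1}$), using (\ref{eq:psi}), (\ref{eq:psiInv}) and Lemma \ref{lem:sumVar}. Your explicit remark that the terms with $i>s-r$ vanish because they contain the factor $\theta_r-\theta_r$ is exactly the bookkeeping that makes the upper summation limit $s$ harmless.
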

 \begin{proof} Similar to the proof of
 Proposition \ref{prop:CVP}.
 \end{proof}

 \begin{proposition} 
 \label{cor:CVPVar}
 The following holds on $V$:
 \begin{align*}
 H &= t_d \sum_{i=0}^{d} \frac{ a^{-i} q^{id}(A-\theta_d I )( A-\theta_{d-1}I ) \cdots (A-\theta_{d-i+1}I)}{(q^2;q^2)_i},
 \\
 H^{-1} &= t^{-1}_d \sum_{i=0}^{d} \frac{ a^{i} q^{-id}(A-\theta_d I )( A-\theta_{d-1}I ) \cdots (A-\theta_{d-i+1}I)}{(q^{-2};q^{-2})_i}.
 \end{align*}
 \end{proposition}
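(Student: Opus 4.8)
The plan is to obtain Proposition~\ref{cor:CVPVar} as the special case $s=d$ of Proposition~\ref{prop:CVPVar}, in direct analogy to the way Proposition~\ref{cor:CVP} was deduced from Proposition~\ref{prop:CVP} by setting $r=0$. First I would observe that the subspace on which Proposition~\ref{prop:CVPVar} holds, namely $V_0 + V_1 + \cdots + V_s$, becomes $V_0 + V_1 + \cdots + V_d$ when $s=d$, and this sum is all of $V$ since $\lbrace V_i\rbrace_{i=0}^d$ are the (complementary) eigenspaces for $A$ on $V$ and hence span $V$. Thus the specialization upgrades a statement on a proper subspace to a statement on the whole module, which is exactly what the proposition claims.

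Next I would check that the coefficients in the two formulas of Proposition~\ref{prop:CVPVar} specialize correctly at $s=d$. In the expansion for $H$, the prefactor $t_s$ becomes $t_d$, and the exponent $q^{i(2s-d)}$ becomes $q^{i(2d-d)}=q^{id}$; the product $(A-\theta_s I)(A-\theta_{s-1}I)\cdots(A-\theta_{s-i+1}I)$ becomes $(A-\theta_d I)(A-\theta_{d-1}I)\cdots(A-\theta_{d-i+1}I)$, and the denominator $(q^2;q^2)_i$ is unchanged. This reproduces the first displayed formula exactly. Symmetrically, in the expansion for $H^{-1}$ the prefactor $t_s^{-1}$ becomes $t_d^{-1}$ and the exponent $q^{i(d-2s)}$ becomes $q^{i(d-2d)}=q^{-id}$, while the operator product and the denominator $(q^{-2};q^{-2})_i$ take the stated form. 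This yields the second displayed formula.

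There is essentially no obstacle here: the entire argument is a routine specialization of an already-established identity, and the only points requiring a word of justification are the two just noted, that $V_0+\cdots+V_d=V$ and that the powers of $q$ collapse as indicated. Accordingly the proof is a single sentence, "Set $s=d$ in Proposition~\ref{prop:CVPVar}," optionally followed by the remark that $V_0+\cdots+V_d=V$ so the identity now holds on all of $V$.
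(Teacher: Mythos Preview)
Your proposal is correct and matches the paper's own proof exactly: the paper simply writes ``Set $s=d$ in Proposition~\ref{prop:CVPVar}.'' Your additional remarks verifying that $V_0+\cdots+V_d=V$ and that the coefficients specialize as stated are accurate and only make the argument more explicit.
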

 \begin{proof} Set $s=d$ in 
 Proposition \ref{prop:CVPVar}.
 \end{proof}


\section{Some diagrams}
\noindent We continue to discuss the $\mathcal O_q$-module $V$ from Section 2. 
In this section we introduce some diagrams that will help us describe
how  the actions of $A$, $A^*$, $L^{\pm 1}(A^*)$ on $V$ are related.

\begin{definition}\rm By a {\it decomposition of $V$} we mean a sequence $\lbrace W_i\rbrace_{i=0}^d$ of nonzero subspaces whose direct sum
is $V$. 
\end{definition}

\begin{example} \rm The eigenspaces $\lbrace V_i\rbrace_{i=0}^d$ 
of $A$ (resp. $\lbrace V^*_i \rbrace_{i=0}^d$  of $A^*$)
(resp. $\lbrace V^+_i\rbrace_{i=0}^d$ of $L(A^*)$)
(resp. $\lbrace V^-_i\rbrace_{i=0}^d$ of $L^{-1}(A^*)$)
form a decomposition of $V$.
\end{example}

\begin{definition} \rm
Let  $\lbrace W_i\rbrace_{i=0}^d$  denote a decomposition of $V$.  Its {\it inversion} is the decomposition $\lbrace W_{d-i}\rbrace_{i=0}^d$ of $V$.
\end{definition}

\noindent  Let $\lbrace W_i \rbrace_{i=0}^d$ denote a decomposition of $V$.
We will describe this decomposition by the diagram
\begin{center}
\begin{picture}(0,20)
\put(-100,0){\line(1,0){200}}
\put(-102,-3){$\bullet$}	
\put(-104,-20){$W_0$}
\put(-72,-3){$\bullet$}	
\put(-74,-20){$W_1$}
\put(-42,-3){$\bullet$}
\put(-44,-20){$W_2$}
\put(5,-20) {$\cdots $} 
	\put(67,-3){$\bullet$}	
	\put(60,-20){$W_{d-1}$}
	
\put(97,-3){$\bullet$}	
\put(95,-20){$W_d$}
\end{picture}
\end{center}
\vspace{1cm}

\noindent  The labels $W_i$ might be suppressed, if they are clear from the context.
\medskip

\noindent Let $\lbrace W_i\rbrace_{i=0}^d$ and 
 $\lbrace W'_i\rbrace_{i=0}^d$ denote decompositions of $V$.
The condition
\begin{align*} 
W_0 + W_1 + \cdots + W_i = W'_0 + W'_1 + \cdots + W'_i \qquad \qquad (0 \leq i \leq d)
\end{align*}
will be described by the diagram
\medskip

\begin{center}
\begin{picture}(0,60)
\put(-100,0){\line(5,1){200}}
\put(-100,0){\line(5,-1){200}}

\put(-102,-3){$\bullet$}	
\put(-72,-9){$\bullet$}	
\put(-42,-15){$\bullet$}	
\put(67,-37){$\bullet$}	
\put(97,-43){$\bullet$}

\put(-109,-20){$W'_0$}
\put(-79,-26){$W'_1$}
\put(-49,-32){$W'_2$}
\put(-5,-38) {$\cdot$}	
\put(5,-40) {$\cdot$}	
\put(15,-42) {$\cdot$}	
\put(55,-54){$W'_{d-1}$}
\put(90,-60){$W'_d$}

\put(-102,-3){$\bullet$}	
\put(-72,3){$\bullet$}	
\put(-42,9){$\bullet$}	
\put(67,31){$\bullet$}	
\put(97,37){$\bullet$}

\put(-109,12){$W_0$}
\put(-79,18){$W_1$}
\put(-49,24){$W_2$}
\put(-5,31) {$\cdot$}	
\put(5,33) {$\cdot$}	
\put(15,35) {$\cdot$}	
\put(55,46){$W_{d-1}$}
\put(90,52){$W_d$}

\end{picture}
\end{center}
\vspace{2cm}

\newpage
\begin{lemma} 
\label{lem:split} {\rm (See \cite[Theorem~4.6]{TD00}.)}
There exist decompositions of $V$ that are related to $\lbrace V_i\rbrace_{i=0}^d$ and $\lbrace V^*_i\rbrace_{i=0}^d$ in the following way:

\begin{center}
\begin{picture}(300,230)
\put(0,180){\line(2,0){300}}
\put(148,213){$A^*$}
\put(-3,177){$\bullet$}
\put(27,177){$\bullet$}
\put(57,177){$\bullet$}
\put(237,177){$\bullet$}
\put(267,177){$\bullet$}
\put(297,177){$\bullet$}

\put(-4,193){$V^*_0$}
\put(26,193){$V^*_1$}
\put(56,193){$V^*_2$}
\put(230,193){$V^*_{d-2}$}
\put(260,193){$V^*_{d-1}$}
\put(294,193){$V^*_d$}
\put(150,193){$\cdot$}
\put(140,193){$\cdot$}
\put(160,193){$\cdot$}

\put(-3,157){$\bullet$}
\put(-3,137){$\bullet$}

\put(27,159){$\bullet$}
\put(57,141){$\bullet$}

\put(297,157){$\bullet$}
\put(297,137){$\bullet$}

\put(267,159){$\bullet$}
\put(237,141){$\bullet$}

\put(0,180){\line(0,-1){90}}
\put(0,90){\line(0,-1){90}}

\put(0,180){\line(30,-18){100}}
\put(100,120){\line(30,-18){200}}

\put(300,180){\line(-30,-18){100}}
\put(200,120){\line(-30,-18){45}}
\put(0,0){\line(30,18){145}}

\put(300,180){\line(0,-1){90}}
\put(300,90){\line(0,-1){90}}

\put(0,0){\line(1,0){300}}
\put(147,-40){$A$}

\put(-3,-3){$\bullet$}
\put(27,-3){$\bullet$}
\put(57,-3){$\bullet$}
\put(237,-3){$\bullet$}
\put(267,-3){$\bullet$}
\put(297,-3){$\bullet$}

\put(-3,17){$\bullet$}
\put(-3,37){$\bullet$}
\put(27,15){$\bullet$}
\put(57,33){$\bullet$}

\put(297,37){$\bullet$}
\put(297,17){$\bullet$}

\put(237,33){$\bullet$}
\put(267,15){$\bullet$}

\put(-4,-20){$V_0$}
\put(26,-20){$V_1$}
\put(56,-20){$V_2$}
\put(230,-20){$V_{d-2}$}
\put(260,-20){$V_{d-1}$}
\put(294,-20){$V_d$}
\put(150,-20){$\cdot$}
\put(140,-20){$\cdot$}
\put(160,-20){$\cdot$}

\end{picture}
\end{center}
\end{lemma}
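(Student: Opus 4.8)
The statement is the existence of the \emph{split decomposition} of the tridiagonal pair $A,A^*$, recorded in \cite[Theorem~4.6]{TD00}; the plan is to give a self-contained argument rather than merely cite it. Write $F^*_i = V^*_0 + V^*_1 + \cdots + V^*_i$ and $G_i = V_i + V_{i+1} + \cdots + V_d$ for $0 \le i \le d$, with $F^*_{-1}=0$ and $G_{d+1}=0$, and set $U_i = F^*_i \cap G_i$. The diagonal of the diagram from the top-left corner $V^*_0$ to the bottom-right corner $V_d$ will be $\{U_i\}_{i=0}^d$ (note $U_0 = V^*_0\cap V = V^*_0$ and $U_d = V\cap V_d = V_d$); the other diagonal is produced by the identical argument after reversing the orderings of both eigenspace sequences $\{V_i\}$ and $\{V^*_i\}$, which again yields a valid tridiagonal labelling. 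The relations to be established, which are precisely the upper and lower fans of the diagram, are $F^*_i = U_0 \oplus \cdots \oplus U_i$ and $G_i = U_i \oplus \cdots \oplus U_d$ for $0 \le i \le d$; these simultaneously exhibit $\{U_i\}$ as a decomposition of $V$.

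First I would record the raising/lowering behaviour. Since $A$ acts as the scalar $\theta_j$ on $V_j$, every $u \in U_i \subseteq G_i$ satisfies $(A-\theta_i I)u \in G_{i+1}$; since $AV^*_h \subseteq V^*_{h-1}+V^*_h+V^*_{h+1}$ (Lemma \ref{lem:AVsVsVs}) gives $A F^*_i \subseteq F^*_{i+1}$, we also have $(A-\theta_i I)u \in F^*_{i+1}$, whence $(A-\theta_i I)U_i \subseteq F^*_{i+1}\cap G_{i+1} = U_{i+1}$. Dually, using $A^*V_h \subseteq V_{h-1}+V_h+V_{h+1}$ (Lemma \ref{lem:AsVVV}), I get $(A^*-\theta^*_i I)U_i \subseteq U_{i-1}$. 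In particular $AU_i \subseteq U_i + U_{i+1}$ and $A^*U_i \subseteq U_{i-1}+U_i$, so $W := \sum_{i=0}^d U_i$ is invariant under both $A$ and $A^*$. As $U_0 = V^*_0 \ne 0$ we have $W \ne 0$, and since $V$ is irreducible as an $\mathcal O_q$-module I conclude $W = V$.

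From $W = V$ together with $U_i \subseteq F^*_i$ and $U_i \subseteq G_i$, I get for each $k$ that $V = \bigl(\sum_{i\le k}U_i\bigr) + \bigl(\sum_{i>k}U_i\bigr) \subseteq F^*_k + G_{k+1}$, so $V = F^*_k + G_{k+1}$. The crux is to upgrade this to a \emph{direct} sum. Here I would invoke the $A \leftrightarrow A^*$ symmetry: running the same construction with the roles of $A$ and $A^*$ interchanged gives $V = F_k + G^*_{k+1}$, where $F_k = V_0 + \cdots + V_k$, so $\dim F^*_k \ge \dim V - \dim G_{k+1} = \dim F_k$ and symmetrically $\dim F_k \ge \dim F^*_k$; hence $\dim F^*_k = \dim F_k$, i.e. $\dim V^*_i = \dim V_i$. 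Then $\dim F^*_k + \dim G_{k+1} = \dim V$ forces $V = F^*_k \oplus G_{k+1}$. With this, the modular (Dedekind) law applied to $F^*_{i-1} \subseteq F^*_i$ inside $V = F^*_{i-1} \oplus G_i$ gives $F^*_i = F^*_{i-1} \oplus (F^*_i \cap G_i) = F^*_{i-1} \oplus U_i$, so by induction $F^*_i = U_0 \oplus \cdots \oplus U_i$, and symmetrically $G_i = U_i \oplus \cdots \oplus U_d$. These are exactly the flag relations in the diagram.

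The main obstacle is precisely the passage from $V = F^*_k + G_{k+1}$ to the direct sum $V = F^*_k \oplus G_{k+1}$: this is where the genuine content of the tridiagonal-pair hypothesis enters, through the equality of dimensions $\dim V_i = \dim V^*_i$. Everything preceding it is formal manipulation of the two flags together with irreducibility, and everything following it is linear algebra via the modular law. Accordingly I would concentrate the effort on the dimension count, which is the part supplied by \cite[Theorem~4.6]{TD00}.
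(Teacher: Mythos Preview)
Your argument is correct and is precisely the standard proof from \cite[Theorem~4.6]{TD00} that the paper is citing (the paper itself gives no proof here, only the reference). The construction of $U_i = F^*_i \cap G_i$, the raising/lowering relations, the irreducibility argument to get $W=V$, the dimension count via the $A\leftrightarrow A^*$ swap, and the modular-law induction are all exactly as in the cited source.

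One small point of coverage: the diagram in the lemma actually displays \emph{four} non-horizontal decompositions, not two---besides the two diagonals there are the two vertical segments on the left and right (these are the decompositions later labelled $B$ and $K^\downarrow$ in Definition~\ref{def:splitMap}). Your construction gives the $K$-diagonal, and reversing both orderings gives the $B^\downarrow$-diagonal as you say; the remaining two come from reversing \emph{only one} of the orderings $\{V_i\}$ or $\{V^*_i\}$. Since inverting either ordering individually still yields a tridiagonal pair (Lemmas~\ref{lem:AsVVV} and~\ref{lem:AVsVsVs} are insensitive to this), your argument applies verbatim in all four cases; you should just say so explicitly.
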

\vspace{1.5cm}

\begin{lemma} 
\label{lem:Lsplit} 
There exist decompositions of $V$ that are related to $\lbrace V_i\rbrace_{i=0}^d$ and $\lbrace V^+_i\rbrace_{i=0}^d$ in the following way:

\begin{center}
\begin{picture}(300,230)
\put(0,180){\line(2,0){300}}
\put(138,213){$L(A^*)$}
\put(-3,177){$\bullet$}
\put(27,177){$\bullet$}
\put(57,177){$\bullet$}
\put(237,177){$\bullet$}
\put(267,177){$\bullet$}
\put(297,177){$\bullet$}

\put(-6,193){$V^+_0$}
\put(24,193){$V^+_1$}
\put(54,193){$V^+_2$}
\put(230,193){$V^+_{d-2}$}
\put(260,193){$V^+_{d-1}$}
\put(294,193){$V^+_d$}
\put(150,193){$\cdot$}
\put(140,193){$\cdot$}
\put(160,193){$\cdot$}

\put(-3,157){$\bullet$}
\put(-3,137){$\bullet$}

\put(27,159){$\bullet$}
\put(57,141){$\bullet$}

\put(297,157){$\bullet$}
\put(297,137){$\bullet$}

\put(267,159){$\bullet$}
\put(237,141){$\bullet$}

\put(0,180){\line(0,-1){90}}
\put(0,90){\line(0,-1){90}}

\put(0,180){\line(30,-18){100}}
\put(100,120){\line(30,-18){200}}

\put(300,180){\line(-30,-18){100}}
\put(200,120){\line(-30,-18){45}}
\put(0,0){\line(30,18){145}}

\put(300,180){\line(0,-1){90}}
\put(300,90){\line(0,-1){90}}

\put(0,0){\line(1,0){300}}
\put(147,-40){$A$}

\put(-3,-3){$\bullet$}
\put(27,-3){$\bullet$}
\put(57,-3){$\bullet$}
\put(237,-3){$\bullet$}
\put(267,-3){$\bullet$}
\put(297,-3){$\bullet$}

\put(-3,17){$\bullet$}
\put(-3,37){$\bullet$}
\put(27,15){$\bullet$}
\put(57,33){$\bullet$}

\put(297,37){$\bullet$}
\put(297,17){$\bullet$}

\put(237,33){$\bullet$}
\put(267,15){$\bullet$}

\put(-4,-20){$V_0$}
\put(26,-20){$V_1$}
\put(56,-20){$V_2$}
\put(230,-20){$V_{d-2}$}
\put(260,-20){$V_{d-1}$}
\put(294,-20){$V_d$}
\put(150,-20){$\cdot$}
\put(140,-20){$\cdot$}
\put(160,-20){$\cdot$}

\end{picture}
\end{center}
\vspace{1.5cm}
 We are using the notation in Definition \ref{def:shortnot}.
\end{lemma}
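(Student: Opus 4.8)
The plan is to obtain the desired decompositions by transporting the decompositions of Lemma \ref{lem:split} through the isomorphism $H$. Recall from Proposition \ref{thm:isoTD} that $H$ is an isomorphism of tridiagonal pairs from the action of $A$, $L(A^*)$ on $V$ to the action of $A$, $A^*$ on $V$; equivalently, $H^{-1}$ is a linear bijection $V \to V$ intertwining these pairs in the reverse direction. The key point is that $H^{-1}$ acts compatibly with the two eigenspace decompositions appearing in the diagram: on one hand $H^{-1}$ is a polynomial in $A$ by Proposition \ref{cor:CVP}, so it acts on each $V_i$ as the scalar $t_i^{-1}$ and hence $H^{-1} V_i = V_i$ for $0 \le i \le d$; on the other hand $H^{-1} V^*_i = V^+_i$ for $0 \le i \le d$ by Definition \ref{def:shortnot}.

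First I would invoke Lemma \ref{lem:split} to produce a decomposition $\lbrace U_i\rbrace_{i=0}^d$ of $V$ (and, if the diagram records a second such family, that family as well) realizing the stated partial-sum relations between $\lbrace V_i\rbrace_{i=0}^d$ and $\lbrace V^*_i\rbrace_{i=0}^d$. I would then set $U^+_i = H^{-1} U_i$ for $0 \le i \le d$. Since $H^{-1}$ is a linear bijection, $\lbrace U^+_i\rbrace_{i=0}^d$ is again a decomposition of $V$, and $H^{-1}$ commutes with the formation of subspace sums.

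The verification is then immediate. Applying $H^{-1}$ to each partial-sum identity of the form $U_0 + \cdots + U_i = V^*_0 + \cdots + V^*_i$ yields $U^+_0 + \cdots + U^+_i = H^{-1}(V^*_0 + \cdots + V^*_i) = V^+_0 + \cdots + V^+_i$, using $H^{-1} V^*_j = V^+_j$; and applying $H^{-1}$ to $U_i + \cdots + U_d = V_i + \cdots + V_d$ yields $U^+_i + \cdots + U^+_d = V_i + \cdots + V_d$, using $H^{-1} V_j = V_j$. Thus every relation encoded by the diagram for the pair $A$, $A^*$ transforms into the corresponding relation for the pair $A$, $L(A^*)$, with the bottom ($A$-)row unchanged and the top row replaced by the $L(A^*)$-eigenspaces $\lbrace V^+_i\rbrace_{i=0}^d$. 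This produces exactly the diagram in the statement.

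I do not anticipate a genuine obstacle: there is no computation, only the structural observation that $H^{-1}$ fixes the $A$-decomposition while carrying the $A^*$-decomposition to the $L(A^*)$-decomposition. The one point requiring care is matching the transported subspaces $\lbrace U^+_i\rbrace_{i=0}^d$ to the picture. Alternatively, one may bypass the transport argument entirely and apply the split-decomposition theorem (Lemma \ref{lem:split}, namely \cite[Theorem~4.6]{TD00}) directly to the tridiagonal pair $A$, $L(A^*)$, whose $A$- and $L(A^*)$-eigenspace decompositions are precisely $\lbrace V_i\rbrace_{i=0}^d$ and $\lbrace V^+_i\rbrace_{i=0}^d$ by Proposition \ref{thm:isoTD} and Lemma \ref{lem:LAs}.
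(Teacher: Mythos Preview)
Your proposal is correct and takes essentially the same approach as the paper: the paper's proof is the single line ``By Proposition~\ref{thm:isoTD} and Lemma~\ref{lem:split},'' which is exactly the transport-via-$H$ argument (or equivalently, the direct application of the split-decomposition theorem to the tridiagonal pair $A$, $L(A^*)$) that you spell out in detail. Your explicit verification that $H^{-1}$ fixes each $V_i$ and sends each $V^*_i$ to $V^+_i$, hence carries every partial-sum relation in the diagram of Lemma~\ref{lem:split} to the corresponding relation in the target diagram, is simply an unpacking of what those two citations mean.
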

\begin{proof} By Proposition
  \ref{thm:isoTD}
and Lemma \ref{lem:split}.
\end{proof}

\begin{lemma} 
\label{lem:LsplitR}
There exist decompositions of $V$ that are related to $\lbrace V_i\rbrace_{i=0}^d$ and $\lbrace V^-_i\rbrace_{i=0}^d$ in the following way:

\begin{center}
\begin{picture}(300,230)
\put(0,180){\line(2,0){300}}
\put(132,213){$L^{-1}(A^*)$}
\put(-3,177){$\bullet$}
\put(27,177){$\bullet$}
\put(57,177){$\bullet$}
\put(237,177){$\bullet$}
\put(267,177){$\bullet$}
\put(297,177){$\bullet$}

\put(-6,193){$V^-_0$}
\put(24,193){$V^-_1$}
\put(54,193){$V^-_2$}
\put(230,193){$V^-_{d-2}$}
\put(260,193){$V^-_{d-1}$}
\put(294,193){$V^-_d$}
\put(150,193){$\cdot$}
\put(140,193){$\cdot$}
\put(160,193){$\cdot$}

\put(-3,157){$\bullet$}
\put(-3,137){$\bullet$}

\put(27,159){$\bullet$}
\put(57,141){$\bullet$}

\put(297,157){$\bullet$}
\put(297,137){$\bullet$}

\put(267,159){$\bullet$}
\put(237,141){$\bullet$}

\put(0,180){\line(0,-1){90}}
\put(0,90){\line(0,-1){90}}

\put(0,180){\line(30,-18){100}}
\put(100,120){\line(30,-18){200}}

\put(300,180){\line(-30,-18){100}}
\put(200,120){\line(-30,-18){45}}
\put(0,0){\line(30,18){145}}

\put(300,180){\line(0,-1){90}}
\put(300,90){\line(0,-1){90}}

\put(0,0){\line(1,0){300}}
\put(147,-40){$A$}

\put(-3,-3){$\bullet$}
\put(27,-3){$\bullet$}
\put(57,-3){$\bullet$}
\put(237,-3){$\bullet$}
\put(267,-3){$\bullet$}
\put(297,-3){$\bullet$}

\put(-3,17){$\bullet$}
\put(-3,37){$\bullet$}
\put(27,15){$\bullet$}
\put(57,33){$\bullet$}

\put(297,37){$\bullet$}
\put(297,17){$\bullet$}

\put(237,33){$\bullet$}
\put(267,15){$\bullet$}

\put(-4,-20){$V_0$}
\put(26,-20){$V_1$}
\put(56,-20){$V_2$}
\put(230,-20){$V_{d-2}$}
\put(260,-20){$V_{d-1}$}
\put(294,-20){$V_d$}
\put(150,-20){$\cdot$}
\put(140,-20){$\cdot$}
\put(160,-20){$\cdot$}

\end{picture}
\end{center}
\vspace{1.5cm}
 We are using the notation in Definition \ref{def:shortnot}.
\end{lemma}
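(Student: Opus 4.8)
The plan is to deduce this lemma from Lemma \ref{lem:split} by transporting the split decomposition of the tridiagonal pair $A$, $A^*$ across the isomorphism $H$, exactly as was done for $L(A^*)$ in Lemma \ref{lem:Lsplit}. The key input is Proposition \ref{thm:isoTD}, which asserts that $H$ is an isomorphism of tridiagonal pairs from (i) the action of $A$, $A^*$ on $V$ to (iii) the action of $A$, $L^{-1}(A^*)$ on $V$.

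First I would recall from Lemma \ref{lem:split} that there is a split decomposition $\lbrace U_i \rbrace_{i=0}^d$ of $V$ for the tridiagonal pair $A$, $A^*$, whose partial sums match those of $\lbrace V^*_i \rbrace_{i=0}^d$ along the upper flag and those of $\lbrace V_i \rbrace_{i=0}^d$ along the lower flag, as encoded in the diagram of Lemma \ref{lem:split}. Next I would check that $H$ carries the relevant data correctly: since $H$ is a polynomial in $A$ it commutes with $A$ and hence fixes each $A$-eigenspace $V_i$; and by Proposition \ref{prop:L} we have $L^{-1}(A^*)H = HA^*$, so $H$ sends each $A^*$-eigenspace $V^*_i$ to the $L^{-1}(A^*)$-eigenspace $HV^*_i$, which by Definition \ref{def:shortnot} equals $V^-_i$ (consistent with Lemma \ref{lem:LAs}).

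With these identifications in hand, I would take the candidate split decomposition for $A$, $L^{-1}(A^*)$ to be $\lbrace HU_i \rbrace_{i=0}^d$ and verify that it inherits the required partial-sum relations: applying the linear bijection $H$ to each flag in the diagram of Lemma \ref{lem:split}, the partial sums of $\lbrace HU_i\rbrace$ match those of $\lbrace V^-_i \rbrace$ along the upper flag and those of $\lbrace V_i\rbrace$ (fixed by $H$) along the lower flag, which is precisely the content of the diagram in Lemma \ref{lem:LsplitR}. Equivalently, and more briefly, one may simply apply Lemma \ref{lem:split} to the tridiagonal pair $A$, $L^{-1}(A^*)$ itself: this is legitimate because that pair is isomorphic to $A$, $A^*$ by Proposition \ref{thm:isoTD}, so a split decomposition with the stated properties exists.

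The step requiring the most care is bookkeeping rather than mathematics: one must match the direction of the isomorphism $H$ (here from (i) to (iii), in contrast to the (ii)-to-(i) direction used for $L(A^*)$ in Lemma \ref{lem:Lsplit}) with the correct eigenspace labels $V^-_i = HV^*_i$, so that the upper flag in the transported diagram is indexed by $\lbrace V^-_i\rbrace$ and not by $\lbrace V^+_i\rbrace$. Once this orientation is fixed, the argument is identical in form to that of Lemma \ref{lem:Lsplit}.
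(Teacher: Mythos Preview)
Your proposal is correct and takes essentially the same approach as the paper, which simply cites Proposition~\ref{thm:isoTD} and Lemma~\ref{lem:split}. Your added detail about transporting the split decomposition via $H$ and checking that $HV_i = V_i$ and $HV^*_i = V^-_i$ is exactly the unpacking of that citation, and your care with the direction of the isomorphism is appropriate.
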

\begin{proof} By Proposition
  \ref{thm:isoTD}
and Lemma \ref{lem:split}.
\end{proof}

\section{The maps $K$, $B$, $K^\downarrow$, $B^\downarrow$}
\noindent We continue to discuss the $\mathcal O_q$-module $V$ from Section 2.
 To aid in this discussion we bring in some maps
$K$, $B$, $K^\downarrow$, $B^\downarrow$.

\begin{definition}\rm 
Let $\lbrace W_i\rbrace_{i=0}^d$ denote a decomposition of $V$. The {\it corresponding map}
is the element $M \in {\rm End}(V)$ such that $(M-q^{d-2i}I)W_i = 0$ for $0 \leq i \leq d$.
In other words, $W_i$ is an eigenspace of $M$ with eigenvalue $q^{d-2i}$ for $0 \leq i \leq d$.
\end{definition}

\begin{example} \rm Let $M $ denote a diagonalizable element in ${\rm End}(V)$,
with eigenvalues $\lbrace q^{d-2i}\rbrace_{i=0}^d$. For $0 \leq i \leq d$
let $W_i$ denote the eigenspace of $M$ for the eigenvalue $q^{d-2i}$.
Then $\lbrace W_i \rbrace_{i=0}^d$ is a decomposition of $V$ and $M$
is the corresponding map.
\end{example}

\begin{lemma} For a decomposition $\lbrace W_i\rbrace_{i=0}^d$ of $V$ the following maps are inverses:
\begin{enumerate}
\item[\rm (i)] the map corresponding to  $\lbrace W_i\rbrace_{i=0}^d$;
\item[\rm (ii)] the map corresponding to  $\lbrace W_{d-i} \rbrace_{i=0}^d$.
\end{enumerate}
\end{lemma}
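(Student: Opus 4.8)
The plan is to name the two maps and show directly that their product is the identity on every summand of the decomposition. Let $M$ denote the map corresponding to $\lbrace W_i\rbrace_{i=0}^d$, so that $W_i$ is the eigenspace of $M$ with eigenvalue $q^{d-2i}$ for $0\le i\le d$, and let $N$ denote the map corresponding to the inverted decomposition $\lbrace W_{d-i}\rbrace_{i=0}^d$. The first point to record is that $M$ and $N$ are diagonalized by one and the same decomposition $\lbrace W_i\rbrace_{i=0}^d$; only their eigenvalues differ.

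Next I would pin down the eigenvalues of $N$ on the subspaces $W_i$. By the definition of the corresponding map applied to the inverted decomposition, its $i$-th term $W_{d-i}$ is an eigenspace of $N$ with eigenvalue $q^{d-2i}$. Setting $j=d-i$ and reading off the eigenspace indexed by $W_j$, one finds that $W_j$ is an eigenspace of $N$ with eigenvalue $q^{2j-d}$. Thus on $W_i$ the map $M$ acts as the scalar $q^{d-2i}$ and the map $N$ acts as the scalar $q^{2i-d}$.

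Finally I would assemble the conclusion. Since the scalars $q^{d-2i}$ and $q^{2i-d}$ are reciprocal, each of $MN$ and $NM$ acts on $W_i$ as $q^{d-2i}q^{2i-d}=1$ for $0\le i\le d$. As $V$ is the direct sum of the $W_i$, it follows that $MN=NM=I$ on $V$, so $N=M^{-1}$ and the two maps are inverses.

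I do not expect a genuine obstacle: the argument is pure bookkeeping once both maps are recognized as simultaneously diagonalized on $\lbrace W_i\rbrace_{i=0}^d$. The only place demanding care is the re-indexing step $j=d-i$, where one must correctly substitute into the eigenvalue $q^{d-2i}$ to obtain $q^{2j-d}$ rather than mis-tracking the exponent.
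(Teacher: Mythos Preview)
Your proof is correct. The paper states this lemma without proof, treating it as immediate from the definition; your argument supplies exactly the routine verification one would give, namely that both maps are simultaneously diagonalized on $\lbrace W_i\rbrace_{i=0}^d$ with reciprocal eigenvalues $q^{d-2i}$ and $q^{2i-d}$.
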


\newpage
\noindent  Let $\lbrace W_i \rbrace_{i=0}^d$ denote a decomposition of $V$, with
corresponding map $M$. We will describe $M$ using the diagram

\begin{center}
\begin{picture}(0,20)
\put(6,10){$M$}
\put(-100,0){\vector(1,0){115}}
\put(15,0){\line(1,0){85}}
\put(-102,-3){$\bullet$}	
\put(-104,-20){$W_0$}
\put(-72,-3){$\bullet$}	
\put(-74,-20){$W_1$}
\put(-42,-3){$\bullet$}
\put(-44,-20){$W_2$}
\put(5,-20) {$\cdots $} 
	\put(67,-3){$\bullet$}	
	\put(60,-20){$W_{d-1}$}
	
\put(97,-3){$\bullet$}	
\put(95,-20){$W_d$}
\end{picture}
\end{center}
\vspace{1cm}
We might suppress the labels $W_i$ along with the $\bullet$ notation, if they are clear from the context.
\medskip

\begin{definition}
\label{def:splitMap}
\rm
Referring to the diagram in Lemma
\ref{lem:split}, let $K$, $B$, $K^\downarrow$, $B^\downarrow$ denote the maps
 that correspond to the non-horizontal decompositions as shown below:

\begin{center}
\begin{picture}(300,220)
\put(0,180){\line(2,0){300}}
\put(143,193){$A^*$}

\put(0,180){\vector(0,-1){90}}
\put(0,90){\line(0,-1){90}}
\put(-18,90){$B$}

\put(0,180){\vector(30,-18){100}}
\put(100,120){\line(30,-18){200}}
\put(80,110){$K$}

\put(300,180){\vector(-30,-18){100}}
\put(200,120){\line(-30,-18){45}}
\put(0,0){\line(30,18){145}}
\put(210,110){$B^\downarrow$}

\put(300,180){\vector(0,-1){90}}
\put(300,90){\line(0,-1){90}}
\put(310,90){$K^\downarrow$}

\put(0,0){\line(1,0){300}}
\put(143,-20){$A$}
\end{picture}
\end{center}
\end{definition}

\vspace{1cm}
\noindent Below we cite some results about $K$ and $B$; similar results hold
for $K^\downarrow$ and $B^\downarrow$.
By \cite[Section~1.1]{augmented},
\begin{align}
&\frac{q K A - q^{-1} A K}{q-q^{-1}} = a K^2+ a^{-1} I,\qquad \qquad 
\frac{q B A - q^{-1} A B}{q-q^{-1}} =a^{-1} B^2+ a I.
\label{eq:KA}
\end{align}
\noindent By \cite[Theorem~9.9]{bockting},
\begin{align}
& aK^2 - \frac{a^{-1} q - a q^{-1}}{q-q^{-1}} K B - \frac{a q - a^{-1} q^{-1}}{q-q^{-1}} BK + a^{-1} B^2 = 0.
\label{eq:KB}
\end{align}
The  equations 
(\ref{eq:KA}),
(\ref{eq:KB})
can be reformulated as follows. By
\cite[Lemma~12.12]{bocktingTer},
\begin{align}
&\frac{q A K^{-1}  - q^{-1}K^{-1}A }{q-q^{-1}} = a^{-1} K^{-2}+ aI,\qquad \quad 
\frac{q A B^{-1} - q^{-1} B^{-1} A }{q-q^{-1}} =a B^{-2}+ a^{-1} I.
\label{eq:KAa}
\end{align}
\noindent By \cite[Theorem~9.10]{bockting},
\begin{align}
& a^{-1}K^{-2} - \frac{a^{-1} q - a q^{-1}}{q-q^{-1}} K^{-1} B^{-1} - \frac{a q - a^{-1} q^{-1}}{q-q^{-1}} B^{-1}K^{-1} + a B^{-2} = 0.
\label{eq:KBa}
\end{align}
\noindent We clarify why 
(\ref{eq:KB}), 
(\ref{eq:KBa}) are equivalent. They each assert that the maps
\begin{align*}
\frac{a^{-1}(q-q^{-1})}{a^{-1}-a} K^{-1}B - \frac{a^{-1} q- a q^{-1}}{a^{-1}-a}I,
\qquad \quad 
\frac{a^{-1}(q-q^{-1})}{a -a^{-1}} BK^{-1} - \frac{a q- a^{-1} q^{-1}}{a-a^{-1}}I
\end{align*}
are inverses, and also that the maps
\begin{align*}
\frac{a(q-q^{-1})}{a-a^{-1}} B^{-1}K - \frac{a q- a^{-1} q^{-1}}{a-a^{-1}}I,
\qquad \quad 
\frac{a (q-q^{-1})}{a^{-1} -a} KB^{-1} - \frac{a^{-1}q- a q^{-1}}{a^{-1}-a}I
\end{align*}
are inverses.

\section{How conjugation by $H$ affects $K$, $B$, $K^\downarrow$, $B^\downarrow$}
\noindent We continue to discuss the $\mathcal O_q$-module $V$ from Section 2.
In this section we describe what happens when one of
$K$,  $B$, $K^\downarrow$, $B^\downarrow$ is conjugated by $H$.

\begin{proposition}
\label{prop:four}
The maps $K$, $B$, $K^\downarrow$, $B^\downarrow$ from Definition
\ref{def:splitMap}
satisfy
\begin{align}
&H^{-1} B H = a A - a^{2} B^{-1},
\qquad \qquad \quad  \;\;
H^{-1} K H = a^{-1} A - a^{-2} K^{-1},
\label{eq:PKB}
\\
&H^{-1} B^\downarrow H = a A - a^{2} (B^\downarrow)^{-1},
 \qquad \qquad
H^{-1} K^\downarrow  H = a^{-1} A - a^{-2} (K^\downarrow)^{-1}.
\label{eq:PKBd}
\end{align}
\end{proposition}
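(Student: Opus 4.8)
The plan is to prove the first identity in (\ref{eq:PKB}), namely $H^{-1}KH = a^{-1}A - a^{-2}K^{-1}$, in full detail; the remaining three will follow by the same method applied to the appropriate split decompositions. First I would work relative to the split decomposition $\lbrace U_i\rbrace_{i=0}^d$ that defines $K$ (the $K$-decomposition of Lemma~\ref{lem:split} and Definition~\ref{def:splitMap}), so that $K$ acts as $q^{d-2i}I$ on $U_i$ and, by the defining property of the split decomposition, $(A-\theta_i I)U_i \subseteq U_{i+1}$ for $0 \leq i \leq d$. Setting $R = A - (aK + a^{-1}K^{-1})$, the eigenvalue formula $\theta_i = aq^{d-2i}+a^{-1}q^{2i-d}$ of Lemma~\ref{lem:form} shows that $aK + a^{-1}K^{-1}$ is exactly the diagonal part of $A$ on $\lbrace U_i\rbrace$, so $RU_i \subseteq U_{i+1}$; since $K$ is scalar on each $U_i$ this forces $KR = q^{-2}RK$, and we have $A = aK + a^{-1}K^{-1} + R$.

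Next I would reduce the target to a commutator identity. Because $H$ is a polynomial in $A$ (Proposition~\ref{cor:CVP}), it commutes with $A$; and from $A = aK + a^{-1}K^{-1}+R$ we get $a^{-1}A - a^{-2}K^{-1} = K + a^{-1}R$. Multiplying the desired identity on the left by $H$ and using $HA = AH$, one sees that $H^{-1}KH = a^{-1}A - a^{-2}K^{-1}$ is equivalent to
\[
[K,H] = a^{-1}HR .
\]
Since $V = \sum_{r} U_r$ and this identity is linear, it suffices to verify it on each $U_r$.

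The crux is the verification on $U_r$. For $u \in U_r$ the product appearing in Proposition~\ref{prop:CVP} telescopes: applied in order of increasing index, each factor $A-\theta_{r+k}I$ coincides with $R$ on $U_{r+k}$, whence $(A-\theta_r I)(A-\theta_{r+1}I)\cdots(A-\theta_{r+i-1}I)u = R^i u$. Thus Proposition~\ref{prop:CVP} gives $Hu = t_r\sum_{i} \frac{a^iq^{i(d-2r)}}{(q^2;q^2)_i}R^i u$, with $R^i u \in U_{r+i}$. Using $KR^i = q^{-2i}R^iK$ I would compute $[K,H]u$, and, applying Proposition~\ref{prop:CVP} with index $r+1$ to $Ru \in U_{r+1}$, also $a^{-1}HRu$; comparing the coefficient of $R^i u$ reduces the claim to a single scalar identity involving $t_r/t_{r+1}$, the power $q^{d-2r}$, and the Pochhammer ratio $(q^2;q^2)_{i-1}/(q^2;q^2)_i = (1-q^{2i})^{-1}$, which holds by the closed form $t_i = a^{2i}q^{2i(d-i)}$ of Lemma~\ref{lem:tiform}. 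I expect this coefficient bookkeeping to be the main obstacle: it is precisely here that the special Chu/Vandermonde form of $H$ enters, since conjugating $K$ by an arbitrary polynomial in $A$ would not return $K + a^{-1}R$.

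Finally, the three remaining identities in (\ref{eq:PKB})--(\ref{eq:PKBd}) would be obtained by running the same argument on the other split decompositions of Lemma~\ref{lem:split} and Definition~\ref{def:splitMap}. For $B$ one uses the companion relation in (\ref{eq:KA}) (equivalently the substitution $a \leftrightarrow a^{-1}$), which recasts the target $H^{-1}BH = aA - a^2B^{-1}$ as $[B,H] = aHR'$ for the corresponding raising map $R'$; for $K^\downarrow$ and $B^\downarrow$, whose decompositions reverse the ordering of the $A^*$-eigenspaces, I would expand $H$ along the opposite $A$-ordering using Proposition~\ref{prop:CVPVar} wherever the relevant raising map decreases the $A$-eigenvalue index. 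In each case the identical Pochhammer cancellation completes the proof.
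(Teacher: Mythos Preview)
Your approach is correct and follows the same outline as the paper: expand $H$ on the $K$-eigenspace $U_r$ via Proposition~\ref{prop:CVP} in powers of the raising map $R=A-aK-a^{-1}K^{-1}$, then use the Pochhammer cancellation $(1-q^{2i})/(q^2;q^2)_i=1/(q^2;q^2)_{i-1}$ to collapse the commutator. Two small points of comparison are worth noting. First, the paper reformulates the target as $HK^{-1}-K^{-1}H=aRH$ rather than your $[K,H]=a^{-1}HR$, and writes $H$ on $U_r$ as $t_r\sum_i a^iR^iK^i/(q^2;q^2)_i$ with the operator $K^i$ in place of your scalar $q^{i(d-2r)}$; this lets the computation run uniformly without re-invoking Proposition~\ref{prop:CVP} at base point $r+1$ or appealing to Lemma~\ref{lem:tiform} for the ratio $t_r/t_{r+1}$. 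Second, for the remaining three identities the paper uses pure symmetry: inverting the ordering $\{V_i\}$ (which sends $a\mapsto a^{-1}$, $K\mapsto B$, and $H$ to a scalar multiple of itself) turns the $K$-identity into the $B$-identity, and inverting $\{V^*_i\}$ (which swaps $K\leftrightarrow K^\downarrow$, $B\leftrightarrow B^\downarrow$ and fixes $A,H,a$) gives the $\downarrow$-versions. In particular the $K^\downarrow$-decomposition has the \emph{same} relation to the $A$-eigenspaces as the $K$-decomposition, so Proposition~\ref{prop:CVPVar} is not needed there; it is $B$ and $B^\downarrow$ whose raising maps run in the reverse $A$-direction, and even these are handled by the $a\leftrightarrow a^{-1}$ symmetry without a separate computation.
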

\begin{proof}
We first obtain the equation on the right in
(\ref{eq:PKB}). Since $H$ commutes with $A$, it suffices to show that
\begin{align}
\label{eq:PsiCom}
H K^{-1} - K^{-1} H= a(A - aK - a^{-1}K^{-1} ) H.
\end{align}
\noindent For $0 \leq i \leq d$ let $U_i$ denote the eigenspace of $K$ for the eigenvalue $q^{d-2i}$. Thus $\lbrace U_i \rbrace_{i=0}^d$
is a decomposition of $V$, and $K$ is the corresponding map. For $0 \leq i \leq d$ the following holds on $U_i$:
\begin{align}
a K+ a^{-1} K^{-1} = \theta_i I .
\label{eq:aK}
\end{align}
By \cite[Theorem~4.6]{TD00},
\begin{align}
(A - \theta_i I ) U_i \subseteq U_{i+1} \qquad \quad (0 \leq i \leq d-1), \qquad (A-\theta_d I )U_d=0.
\label{eq:AU}
\end{align}
Define
\begin{align*}
R = A - aK - a^{-1} K^{-1}.
\end{align*}
By (\ref{eq:aK}), 
for 
$0 \leq i \leq d$ the following holds on $U_i$:
\begin{align}
\label{eq:RA}
R = A - \theta_i I.
\end{align}
By (\ref{eq:AU}) and (\ref{eq:RA}),
\begin{align}
\label{eq:RU}
R U_i \subseteq U_{i+1} \qquad \quad (0 \leq i \leq d-1), \qquad RU_d=0.
\end{align}
By (\ref{eq:RU}) and the construction,
\begin{align*}
RK = q^2 KR.
\end{align*}
For $0 \leq r \leq d$ we show that 
(\ref{eq:PsiCom}) holds on $U_r$.
Using
(\ref{eq:RA}), 
(\ref{eq:RU}) 
we find that for $0 \leq i \leq d-r$ the following holds on $U_r$:
\begin{align}
R^i = (A-\theta_r I ) (A-\theta_{r+1}I) \cdots (A-\theta_{r+i-1}I).
\label{eq:Ri}
\end{align}
Also by 
(\ref{eq:RU})  we have $R^{d-r+1}=0$ on $U_r$.
By Definition
\ref{def:splitMap}
and the discussion above Lemma \ref{lem:split},
\begin{align*} 
U_r + U_{r+1} + \cdots + U_d = V_r + V_{r+1} + \cdots + V_d.
\end{align*}
The above subspace contains $U_r$, so 
by Proposition
\ref{prop:CVP}
and (\ref{eq:Ri})
the following holds on $U_r$:
\begin{align*}
H = t_r \sum_{i=0}^{d-r} \frac{a^i R^i K^i}{(q^2;q^2)_i}.
\end{align*}
We may now argue that on $U_r$,
\begin{align*}
H K^{-1} - K^{-1} H &= t_r \sum_{i=0}^{d-r} \frac{a^i R^i K^i}{(q^2;q^2)_i} K^{-1}(1-q^{2i})
\\ 
&= t_r \sum_{i=1}^{d-r} \frac{a^i R^i K^{i-1}}{(q^2;q^2)_{i-1}}
\\
&= t_r \sum_{i=0}^{d-r-1} \frac{a^{i+1}R^{i+1} K^{i} }{(q^2;q^2)_{i}}
\\
&= t_r \sum_{i=0}^{d-r} \frac{a^{i+1}R^{i+1} K^{i}} {(q^2;q^2)_{i}}
\\
& = a R H
\\
& = a (A-aK-a^{-1} K^{-1}) H.
\end{align*}
We have obtained
(\ref{eq:PsiCom}), which implies  the equation on the right in
(\ref{eq:PKB}). The remaining equations in
(\ref{eq:PKB}), (\ref{eq:PKBd}) are obtained  by the following observations.
In (\ref{eq:PKB}), 
the equation on the left is obtained from the equation on the right
by replacing the decomposition $\lbrace V_i \rbrace_{i=0}^d$ by its inversion
$\lbrace V_{d-i}\rbrace_{i=0}^d$.
Moreover (\ref{eq:PKBd}) is obtained from (\ref{eq:PKB}) by replacing
the decomposition
$\lbrace V^*_i \rbrace_{i=0}^d$ by its inversion
$\lbrace V^*_{d-i}\rbrace_{i=0}^d$.

\end{proof}

\newpage
\begin{corollary}
\label{prop:LspiitMap}
Referring to the diagram in Lemma
\ref{lem:Lsplit},  the maps
 that correspond to the non-horizontal decompositions are shown below:

\begin{center}
\begin{picture}(300,220)
\put(0,180){\line(2,0){300}}
\put(140,193){$L(A^*)$}

\put(0,180){\vector(0,-1){90}}
\put(0,90){\line(0,-1){90}}
\put(-22,75){$aA-a^2 B^{-1}$}

\put(0,180){\vector(30,-18){100}}
\put(100,120){\line(30,-18){200}}
\put(60,102){$a^{-1}A-a^{-2}K^{-1}$}

\put(300,180){\vector(-30,-18){100}}
\put(200,120){\line(-30,-18){45}}
\put(0,0){\line(30,18){145}}
\put(170,102){$aA-a^2 (B^\downarrow)^{-1}$}

\put(300,180){\vector(0,-1){90}}
\put(300,90){\line(0,-1){90}}
\put(266,75){$a^{-1} A-a^{-2} (K^\downarrow)^{-1}$}

\put(0,0){\line(1,0){300}}
\put(143,-20){$A$}
\end{picture}
\end{center}
\vspace{1cm}
\end{corollary}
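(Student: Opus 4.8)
The plan is to recognize that the four split decompositions displayed in Lemma \ref{lem:Lsplit} are exactly the $H^{-1}$-images of the four split decompositions of Lemma \ref{lem:split}, and that pushing a decomposition through $H^{-1}$ conjugates its corresponding map by $H$. Granting these two facts, the four formulas drop out of Proposition \ref{prop:four} with no further computation.

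First I would invoke Proposition \ref{thm:isoTD}, which gives that $H$ is an isomorphism of tridiagonal pairs from the pair in (ii), namely $A$, $L(A^*)$, to the pair in (i), namely $A$, $A^*$; equivalently $H^{-1}$ carries the pair $A$, $A^*$ onto the pair $A$, $L(A^*)$. Since the split decomposition of Lemma \ref{lem:split} is a construction canonically attached to a tridiagonal pair, an isomorphism of tridiagonal pairs carries it onto the split decomposition of the image pair. Hence each of the four split decompositions in Lemma \ref{lem:Lsplit} has the form $\lbrace H^{-1}W_i\rbrace_{i=0}^d$, where $\lbrace W_i\rbrace_{i=0}^d$ is the corresponding split decomposition from Lemma \ref{lem:split}. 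The two horizontal endpoints are consistent with this: $H^{-1}V_i=V_i$ because $H$ commutes with $A$, and $H^{-1}V^*_i=V^+_i$ by Definition \ref{def:shortnot}. This functoriality is precisely what was already used in the proof of Lemma \ref{lem:Lsplit}, so nothing new is needed here.

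Next I would record the elementary observation that if $M$ is the map corresponding to a decomposition $\lbrace W_i\rbrace_{i=0}^d$, then $H^{-1}MH$ is the map corresponding to $\lbrace H^{-1}W_i\rbrace_{i=0}^d$. Indeed, for $w\in W_i$ we have $Mw=q^{d-2i}w$, so $(H^{-1}MH)(H^{-1}w)=H^{-1}Mw=q^{d-2i}H^{-1}w$; thus every vector of $H^{-1}W_i$ is a $q^{d-2i}$-eigenvector of $H^{-1}MH$, which is exactly the defining property of the corresponding map. Combining this with the previous paragraph, the maps attached to the four non-horizontal decompositions in Lemma \ref{lem:Lsplit}, read off in the $K$-, $B$-, $K^\downarrow$-, $B^\downarrow$-positions of Definition \ref{def:splitMap}, are $H^{-1}KH$, $H^{-1}BH$, $H^{-1}K^\downarrow H$, $H^{-1}B^\downarrow H$.

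Finally I would evaluate these four conjugates using Proposition \ref{prop:four}, obtaining $a^{-1}A-a^{-2}K^{-1}$, $aA-a^{2}B^{-1}$, $a^{-1}A-a^{-2}(K^\downarrow)^{-1}$, and $aA-a^{2}(B^\downarrow)^{-1}$, which are precisely the labels in the stated diagram. There is no serious obstacle; the only point demanding care is bookkeeping, namely verifying the direction of the conjugation ($M\mapsto H^{-1}MH$ rather than $HMH^{-1}$) and matching each conjugate to the correct geometric position in the diagram. Both are settled by the eigenvector computation above together with the identifications $H^{-1}V_i=V_i$ and $H^{-1}V^*_i=V^+_i$.
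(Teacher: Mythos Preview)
Your argument is correct and is essentially the paper's own proof, just with the details spelled out: the paper simply cites Propositions \ref{prop:L} and \ref{prop:four}, which together encode exactly the two facts you use (that $H^{-1}$ carries the split decompositions of $A,A^*$ to those of $A,L(A^*)$, and the conjugation formulas for $K,B,K^\downarrow,B^\downarrow$). Your invocation of Proposition \ref{thm:isoTD} in place of Proposition \ref{prop:L} is harmless since the former is derived from the latter.
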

\begin{proof} By Propositions
\ref{prop:L},
\ref{prop:four}.
\end{proof}

\noindent The following result is a reformulation of Proposition
\ref{prop:four}.

\begin{proposition}
\label{prop:fourR}
The maps $K$, $B$, $K^\downarrow$, $B^\downarrow$ from Definition
\ref{def:splitMap}
satisfy
\begin{align}
&H B^{-1} H^{-1} = a^{-1} A - a^{-2} B,
\qquad \qquad \quad  \;\;
H K^{-1} H^{-1}= a A - a^{2} K,
\label{eq:PKBalt}
\\
&H (B^\downarrow)^{-1} H^{-1} = a^{-1} A - a^{-2} B^\downarrow,
 \qquad \qquad
H (K^\downarrow)^{-1} H^{-1}= a A - a^{2} K^\downarrow.
\label{eq:PKBdalt}
\end{align}
\end{proposition}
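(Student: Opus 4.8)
The plan is to derive each of the four equations in Proposition \ref{prop:fourR} directly from the corresponding equation in Proposition \ref{prop:four} by conjugating with $H$. The only input beyond Proposition \ref{prop:four} that I will need is the fact, established in the proof of Lemma \ref{lem:PP}(i), that $H$ commutes with $A$; equivalently $HAH^{-1}=A$ on $V$.

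First I would treat the equation on the right in (\ref{eq:PKB}), namely $H^{-1} K H = a^{-1} A - a^{-2} K^{-1}$. Applying the conjugation $X \mapsto H X H^{-1}$ to both sides, the left-hand side collapses to $K$, while the right-hand side becomes $a^{-1} H A H^{-1} - a^{-2} H K^{-1} H^{-1} = a^{-1} A - a^{-2} H K^{-1} H^{-1}$, using $HAH^{-1}=A$. Solving the resulting identity $K = a^{-1} A - a^{-2} H K^{-1} H^{-1}$ for the term $H K^{-1} H^{-1}$ gives $H K^{-1} H^{-1} = a^{2}\bigl(a^{-1} A - K\bigr) = aA - a^{2} K$, which is precisely the equation on the right in (\ref{eq:PKBalt}).

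The remaining three equations are obtained in exactly the same manner. Conjugating the equation on the left in (\ref{eq:PKB}), namely $H^{-1} B H = aA - a^{2} B^{-1}$, by $H$ yields $B = aA - a^{2} H B^{-1} H^{-1}$, whence $H B^{-1} H^{-1} = a^{-1} A - a^{-2} B$, the equation on the left in (\ref{eq:PKBalt}); and the two equations in (\ref{eq:PKBdalt}) follow from the two equations in (\ref{eq:PKBd}) by the identical rearrangement applied to $B^\downarrow$ and $K^\downarrow$. I do not anticipate a genuine obstacle here, since the argument is a single algebraic rearrangement for each of the four equations. The only point requiring care is bookkeeping of the powers of $a$ when isolating the conjugated inverse, as these powers alternate in sign between the $K$-type and $B$-type equations.
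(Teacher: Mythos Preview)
Your proposal is correct and matches the paper's approach: the paper's proof simply states that the equations in Proposition~\ref{prop:fourR} are reformulations of those in Proposition~\ref{prop:four}, and your conjugation-by-$H$ argument (using $HAH^{-1}=A$) is exactly the computation that effects this reformulation. Your detailed check of the powers of $a$ is accurate in each case.
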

\begin{proof} These equations are reformulations of the equations in
(\ref{eq:PKB}),
(\ref{eq:PKBd}).
\end{proof}

\newpage
\begin{corollary}
\label{prop:LspiitMapR}
Referring to the diagram in Lemma
\ref{lem:LsplitR}, 
 the maps
 that correspond to the non-horizontal decompositions are shown below:

\begin{center}
\begin{picture}(300,220)
\put(0,180){\line(2,0){300}}
\put(130,193){$L^{-1}(A^*)$}

\put(0,0){\vector(0,1){95}}
\put(0,95){\line(0,1){85}}
\put(-33,75){$a^{-1}A-a^{-2} B$}

\put(0,180){\line(30,-18){90}}
\put(300,0){\vector(-30,18){210}}
\put(60,102){$a A-a^{2}K$}

\put(300,180){\line(-30,-18){90}}
\put(155,93){\vector(30,18){55}}
\put(0,0){\line(30,18){145}}
\put(180,102){$a^{-1}A-a^{-2} B^\downarrow$}

\put(300,0){\vector(0,1){95}}
\put(300,95){\line(0,1){85}}
\put(278,75){$a A-a^{2} K^\downarrow$}

\put(0,0){\line(1,0){300}}
\put(143,-20){$A$}
\end{picture}
\end{center}
\vspace{1cm}
\end{corollary}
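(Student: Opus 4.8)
The plan is to obtain this corollary in exact parallel with Corollary~\ref{prop:LspiitMap}, deriving it from the $L^{-1}$ half of Proposition~\ref{prop:L} together with the reformulated conjugation identities of Proposition~\ref{prop:fourR}. The first step is to recognize the diagram in Lemma~\ref{lem:LsplitR} as the image under $H$ of the diagram in Lemma~\ref{lem:split}. By Proposition~\ref{thm:isoTD} the map $H$ is an isomorphism of tridiagonal pairs from the action of $A,A^*$ to the action of $A,L^{-1}(A^*)$; since $H$ commutes with $A$ it fixes each eigenspace $V_i$, and since $HA^*H^{-1}=L^{-1}(A^*)$ it sends $V^*_i$ to $V^-_i$ (see Definition~\ref{def:shortnot}). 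Because the four intermediate decompositions in Lemma~\ref{lem:split} are determined canonically by the outer pair of decompositions, $H$ carries each of them to the correspondingly placed intermediate decomposition in Lemma~\ref{lem:LsplitR}. Thus each non-horizontal decomposition in Lemma~\ref{lem:LsplitR} is the $H$-image of the corresponding one in Lemma~\ref{lem:split}.

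Next I would record how the corresponding map changes. If a decomposition $\{W_i\}_{i=0}^d$ has corresponding map $M$, then $\{HW_i\}_{i=0}^d$ has corresponding map $HMH^{-1}$, since $(HMH^{-1})HW_i=q^{d-2i}HW_i$. The one delicate point is the ordering convention encoded by the arrows: in Corollary~\ref{prop:LspiitMapR} every arrow is reversed relative to the corresponding arrow in Definition~\ref{def:splitMap}, so each $H$-image decomposition is read in the inverted order $\{HW_{d-i}\}_{i=0}^d$, whose corresponding map is $(HMH^{-1})^{-1}=HM^{-1}H^{-1}$. Carrying this out for $M\in\{B,K,B^\downarrow,K^\downarrow\}$ identifies the four maps attached to the reversed arrows as $HB^{-1}H^{-1}$, $HK^{-1}H^{-1}$, $H(B^\downarrow)^{-1}H^{-1}$, $H(K^\downarrow)^{-1}H^{-1}$.

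Finally I would substitute the four values supplied by Proposition~\ref{prop:fourR}, namely $HB^{-1}H^{-1}=a^{-1}A-a^{-2}B$, $HK^{-1}H^{-1}=aA-a^2K$, and the two $\downarrow$-analogues, which are exactly the labels displayed in the diagram. The genuinely subtle step is the orientation bookkeeping of the second paragraph: one must check that the reversed arrows really do correspond to inverted orderings, so that the labels emerge in terms of $B,K,B^\downarrow,K^\downarrow$ themselves rather than their inverses, in contrast with the unreversed arrows of Corollary~\ref{prop:LspiitMap}. Once this is settled, no new computation is needed beyond citing Propositions~\ref{prop:L} and~\ref{prop:fourR}.
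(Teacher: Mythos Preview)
Your proposal is correct and follows essentially the same approach as the paper: the paper's proof is the one-line citation ``By Propositions~\ref{prop:L}, \ref{prop:fourR},'' and your argument unpacks exactly this, using $H$ to transport the decompositions of Definition~\ref{def:splitMap} to those of Lemma~\ref{lem:LsplitR} and then invoking Proposition~\ref{prop:fourR} for the explicit labels. Your careful treatment of the arrow reversal---identifying the displayed maps as $HM^{-1}H^{-1}$ rather than $HMH^{-1}$---is precisely the bookkeeping that the paper's terse citation leaves to the reader, and it is handled correctly.
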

\begin{proof} By Propositions
\ref{prop:L},
\ref{prop:fourR}.
\end{proof}

\noindent Shortly, we will make use of the following four maps:
\begin{align}
&\frac{a K - a^{-1} B}{a - a^{-1}}, \qquad \qquad
\frac{a^{-1} K^{-1} - a B^{-1}}{a^{-1}-a}.
\label{eq:fat}
\\
&\frac{a K^\downarrow- a^{-1} B^\downarrow}{a - a^{-1}}, \qquad \quad
\frac{a^{-1} (K^\downarrow)^{-1} - a (B^\downarrow)^{-1}}{a^{-1}-a}.
\label{eq:fat2}
\end{align}
The map on the left in (\ref{eq:fat}) first appeared in
\cite[Definition~6.1]{bocktingQexp}. Both maps in (\ref{eq:fat}) appear in
\cite[Section~12]{bocktingTer}.
By (\ref{eq:PKB}), 
\begin{align} 
H^{-1} 
\frac{a K - a^{-1} B}{a - a^{-1}} H &= 
\frac{a^{-1} K^{-1} - a B^{-1}}{a^{-1}-a}.
\label{eq:fatPsi}
\end{align}
By (\ref{eq:PKBd}),
\begin{align}
H^{-1} 
\frac{a K^\downarrow - a^{-1} B^\downarrow}{a - a^{-1}} H &= 
\frac{a^{-1} (K^\downarrow)^{-1} - a (B^\downarrow)^{-1}}{a^{-1}-a}.
\label{eq:fatPsi2}
\end{align}
\begin{lemma} 
\label{lem:fatInv}
Each map in 
{\rm (\ref{eq:fat})},
{\rm (\ref{eq:fat2})}
 is diagonalizable, with eigenvalues $\lbrace q^{d-2i}\rbrace_{i=0}^d$.
\end{lemma}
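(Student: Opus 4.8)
The plan is to prove the statement first for the single map $M=\frac{aK-a^{-1}B}{a-a^{-1}}$ on the left in (\ref{eq:fat}), and then to deduce the other three cases. The heart of the matter is to compare $K$ and $B$ on a \emph{common} flag. Let $\lbrace U_i\rbrace_{i=0}^d$ be the eigenspaces of $K$ (so $K=q^{d-2i}I$ on $U_i$) and $\lbrace U'_i\rbrace_{i=0}^d$ the eigenspaces of $B$. By the construction of $K$, $B$ from the split decomposition in Lemma \ref{lem:split} (see \cite[Theorem~4.6]{TD00}), together with the fact that $B$ is obtained from $K$ by inverting the ordering of the $A$-eigenspaces \emph{only} — so that the $A^*$-ordering is unchanged — the two decompositions share the common flag
\begin{align*}
U_0+U_1+\cdots +U_i = V^*_0+V^*_1+\cdots +V^*_i = U'_0+U'_1+\cdots +U'_i \qquad (0\le i\le d).
\end{align*}
I would write $W_i=V^*_0+\cdots +V^*_i$ for this flag, with $W_{-1}=0$.

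First I would observe that both $K$ and $B$ preserve each $W_i$, since $W_i$ is a sum of eigenspaces of $K$ and also a sum of eigenspaces of $B$. Moreover the scalar induced by $K$ on the quotient $W_i/W_{i-1}$ is $q^{d-2i}$, because the ``new'' eigenspace $U_i$ carries $K$-eigenvalue $q^{d-2i}$; the identical computation with $\lbrace U'_i\rbrace$ shows $B$ also induces $q^{d-2i}$ on $W_i/W_{i-1}$. Hence $M$ preserves each $W_i$ and induces on $W_i/W_{i-1}$ the scalar $\frac{aq^{d-2i}-a^{-1}q^{d-2i}}{a-a^{-1}}=q^{d-2i}$. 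I want to stress that this is exactly the step where one must use the common $A^*$-flag and not the $A$-flag: on the $A$-flag $V_0+\cdots +V_i$ the maps $K$ and $B$ induce the \emph{opposite} scalars $q^{d-2i}$ and $q^{2i-d}$, and the cancellation producing the clean eigenvalue $q^{d-2i}$ would fail.

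From $(M-q^{d-2i}I)W_i\subseteq W_{i-1}$ for $0\le i\le d$ I would conclude, descending through the flag, that $\prod_{i=0}^d (M-q^{d-2i}I)=0$ on $V=W_d$. Since $q$ is not a root of unity, the scalars $\lbrace q^{d-2i}\rbrace_{i=0}^d$ are mutually distinct, so this polynomial has simple roots; hence $M$ is diagonalizable and its eigenvalues form a subset of $\lbrace q^{d-2i}\rbrace_{i=0}^d$. Because $M$ acts as $q^{d-2i}$ on the nonzero quotient $W_i/W_{i-1}\cong V^*_i$, each $q^{d-2i}$ is in fact an eigenvalue, giving the full eigenvalue set.

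Finally I would dispatch the remaining three maps. By (\ref{eq:fatPsi}) the second map in (\ref{eq:fat}) equals $H^{-1}MH$, so it is conjugate to $M$ and therefore diagonalizable with the same eigenvalues. The two maps in (\ref{eq:fat2}) are handled in the same way: the argument above applies verbatim with $K$, $B$ replaced by $K^\downarrow$, $B^\downarrow$, the only change being that the common flag is now the $A^*$-flag in the inverted $A^*$-ordering, namely $V^*_d+V^*_{d-1}+\cdots +V^*_{d-i}$; and (\ref{eq:fatPsi2}) identifies the second map in (\ref{eq:fat2}) as the $H$-conjugate of $\frac{aK^\downarrow-a^{-1}B^\downarrow}{a-a^{-1}}$. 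The main obstacle is the bookkeeping in the first paragraph: one must pin down the common flag correctly and check that $K$ and $B$ induce the \emph{same} scalar $q^{d-2i}$ on each quotient; once the correct ($A^*$) flag is identified, everything else is immediate.
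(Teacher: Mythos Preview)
Your proof is correct. The reductions you use for the last three maps---conjugation by $H$ via (\ref{eq:fatPsi}), (\ref{eq:fatPsi2}), and the symmetry $\lbrace V^*_i\rbrace \leftrightarrow \lbrace V^*_{d-i}\rbrace$---are exactly the ones the paper uses. The difference is in the treatment of the first map $M=\frac{aK-a^{-1}B}{a-a^{-1}}$: the paper simply cites \cite[Lemma~8.1]{bocktingQexp}, whereas you supply a self-contained argument. Your flag argument is the natural one and is essentially what underlies the cited result: the $K$- and $B$-decompositions share the $A^*$-flag $V^*_0+\cdots+V^*_i$ (this is precisely what the diagram in Lemma~\ref{lem:split} encodes at the top-left vertex), and since both $K$ and $B$ act as $q^{d-2i}$ on the $i$th subquotient, so does $M$; upper-triangularity with distinct diagonal entries then gives diagonalizability with the full eigenvalue set. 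The payoff is that your version is independent of the external reference, at the cost of a paragraph of bookkeeping.

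One small remark: your parenthetical about the $A$-flag (``on the $A$-flag $V_0+\cdots+V_i$ the maps $K$ and $B$ induce opposite scalars'') is a useful heuristic for why the $A^*$-flag is the right choice, but as stated it is slightly loose, since $K$ does not preserve the flag $V_0+\cdots+V_i$ (it preserves the opposite $A$-flag $V_i+\cdots+V_d$, while $B$ preserves $V_0+\cdots+V_i$). This does not affect the argument; the main line of reasoning uses only the $A^*$-flag and is sound.
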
 
\begin{proof} By
\cite[Lemma~8.1]{bocktingQexp}, the result holds for the map on the left in
(\ref{eq:fat}).
By this and (\ref{eq:fatPsi}), the result holds for the map on the right in
(\ref{eq:fat}). Replacing the decomposition $\lbrace V^*_i\rbrace_{i=0}^d$ by
its inversion $\lbrace V^*_{d-i}\rbrace_{i=0}^d$, we obtain the result for the maps
(\ref{eq:fat2}).
\end{proof}

\section{Equitable triples}

\noindent We continue to discuss the $\mathcal O_q$-module $V$ from Section 2.
As we compare the eigenspace decompositions for $A$, $A^*$, $L^{\pm 1}(A^*)$ on $V$, we
will use the concept of an equitable triple. We will define this concept after some preliminary remarks.
\medskip

\noindent
Let $X$, $Y$ denote elements in ${\rm End}(V)$ such that
\begin{align}
\frac{qXY-q^{-1}YX}{q-q^{-1}} = I.
\label{eq:qW}
\end{align}
We call (\ref{eq:qW}) the {\it $q$-Weyl relation}.

\begin{lemma} \label{lem:qW}
 For the above maps $X$, $Y$
the following {\rm (i), (ii)} hold:
\begin{enumerate}
\item[\rm (i)]
Let $u\in V$ denote an eigenvector for $X$ with nonzero eigenvalue $\lambda$. Then the vector
$(Y-\lambda^{-1} I)u$ is either zero, or an eigenvector for $X$ with eigenvalue $q^{-2} \lambda$.
\item[\rm (ii)]
Let $v \in V$ denote an eigenvector for $Y$ with nonzero eigenvalue $\mu$. Then the vector
$(X-\mu^{-1} I)v$ is either zero, or an eigenvector for $Y$ with eigenvalue $q^{2} \mu$.
\end{enumerate}
\end{lemma}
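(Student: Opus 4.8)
The plan is to prove the two parts by direct computation using the $q$-Weyl relation $(\ref{eq:qW})$, treating them symmetrically since they are mirror images of each other under swapping the roles of $X$ and $Y$ together with $q \mapsto q^{-1}$. For part (i), I start from the hypothesis $Xu = \lambda u$ with $\lambda \neq 0$, and I wish to show that $X\bigl((Y-\lambda^{-1}I)u\bigr) = q^{-2}\lambda\,(Y-\lambda^{-1}I)u$ whenever the latter vector is nonzero. First I would rearrange the $q$-Weyl relation into a commutation rule that lets me push $X$ past $Y$: multiplying $(\ref{eq:qW})$ out gives $qXY - q^{-1}YX = (q-q^{-1})I$, so $XY = q^{-2}YX + (1-q^{-2})I$. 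This is the key identity; everything else is substitution.

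With this in hand, the computation is short. I would apply $X$ to $(Y-\lambda^{-1}I)u = Yu - \lambda^{-1}u$ and use the rearranged relation:
\begin{align*}
X(Yu) &= \bigl(q^{-2}YX + (1-q^{-2})I\bigr)u = q^{-2}\lambda\, Yu + (1-q^{-2})u,
\end{align*}
while $X(\lambda^{-1}u) = \lambda^{-1}Xu = u$. Subtracting,
\begin{align*}
X\bigl((Y-\lambda^{-1}I)u\bigr) = q^{-2}\lambda\, Yu + (1-q^{-2})u - u = q^{-2}\lambda\, Yu - q^{-2}u,
\end{align*}
and the right-hand side is exactly $q^{-2}\lambda\bigl(Yu - \lambda^{-1}u\bigr) = q^{-2}\lambda\,(Y-\lambda^{-1}I)u$. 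Hence $(Y-\lambda^{-1}I)u$ lies in the $q^{-2}\lambda$-eigenspace of $X$, so it is either zero or an eigenvector with eigenvalue $q^{-2}\lambda$, as claimed. Note the argument uses $\lambda\neq 0$ only to form $\lambda^{-1}$, which is guaranteed by the hypothesis.

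For part (ii), rather than repeat the calculation I would observe that the $q$-Weyl relation $(\ref{eq:qW})$ can be rewritten as $\frac{qYX - q^{-1}XY}{q-q^{-1}}$ evaluated with $q$ replaced by $q^{-1}$, i.e. the pair $(Y,X)$ satisfies the $q$-Weyl relation with $q^{-1}$ in place of $q$. Applying the statement of part (i) to this reindexed pair then yields part (ii) directly, with the eigenvalue factor becoming $q^{2}\mu$ instead of $q^{-2}\lambda$. If one prefers an explicit check, it is the same three-line computation with the roles interchanged.

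I do not anticipate a genuine obstacle here: the only subtlety is bookkeeping the correct power of $q$ and making sure the symmetry between (i) and (ii) is invoked cleanly, so that part (ii) is not simply asserted but seen to follow from the $q \mapsto q^{-1}$ symmetry of the $q$-Weyl relation. The main care point is verifying that $(\ref{eq:qW})$ really is symmetric under $(X,Y,q)\mapsto(Y,X,q^{-1})$, which I would confirm by writing both forms out explicitly before invoking the reduction.
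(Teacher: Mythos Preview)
Your proof is correct and follows essentially the same approach as the paper: the paper simply states that one checks $(X - q^{-2}\lambda I)(Y - \lambda^{-1} I)u = 0$ and that (ii) is similar, which is exactly the computation you carry out in detail. Your symmetry argument for (ii) via $(X,Y,q)\mapsto(Y,X,q^{-1})$ is a clean way to avoid repeating the calculation and is consistent with the paper's ``similar to (i)''.
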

\begin{proof} (i) One checks that
\begin{align*}
(X - q^{-2}\lambda I )(Y -\lambda^{-1}I )u =0.
\end{align*}
\noindent (ii) Similar to the proof of (i) above.
\end{proof}

\begin{corollary} {\rm (See \cite[Lemma~11.4]{qtet}.)}
\label{cor:XY} For the above maps $X$, $Y$ the following are equivalent:
\begin{enumerate}
\item[\rm (i)] $X$ is diagonalizable with eigenvalues $\lbrace q^{d-2i} \rbrace_{i=0}^d$;
\item[\rm (ii)] $Y$ is diagonalizable with eigenvalues $\lbrace q^{d-2i} \rbrace_{i=0}^d$.
\end{enumerate}
Assume that  {\rm (i), (ii)} hold. Then the eigenspace decompositions of $X$, $Y$ are described 
by the following diagram:
\medskip

\begin{center}
\begin{picture}(0,40)
\put(-100,0){\vector(5,1){115}}
\put(15,23){\line(5,1){85}}
\put(-5,32){$Y$}
\put(100,-40){\vector(-5,1){90}}
\put(10,-22){\line(-5,1){110}}
\put(-5,-40){$X$}

\end{picture}
\end{center}
\vspace{2cm}
\end{corollary}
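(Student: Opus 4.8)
The plan is to prove Corollary \ref{cor:XY} directly from Lemma \ref{lem:qW}, exploiting the $q$-Weyl relation to shuttle between eigenvectors of $X$ and eigenvectors of $Y$. I would first establish the equivalence of (i) and (ii). By symmetry of the $q$-Weyl relation (swapping the roles of $X$ and $Y$ replaces $q$ by $q^{-1}$), it suffices to prove that (i) implies (ii). So assume $X$ is diagonalizable with eigenvalues $\lbrace q^{d-2i}\rbrace_{i=0}^d$, and fix an eigenvector $u_0$ for $X$ with the top eigenvalue $q^d$. Repeatedly applying Lemma \ref{lem:qW}(i), the vectors $u_{i} = (Y-q^{2i-d}I)\cdots(Y-q^{-d}I)u_0$ are eigenvectors for $X$ with eigenvalues $q^{d-2i}$ (or zero), descending through the spectrum; since $X$ has exactly $d+1$ distinct eigenvalues, this chain produces a nonzero vector in each eigenspace and terminates appropriately.

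The main work is to extract the spectral data for $Y$ from this construction. The key identity is the factorization one verifies in the proof of Lemma \ref{lem:qW}(i), namely $(X-q^{-2}\lambda I)(Y-\lambda^{-1}I)u = 0$ whenever $Xu=\lambda u$; iterating this along the chain of $X$-eigenvectors shows that $Y$ maps each $X$-eigenspace into a sum of neighbouring $X$-eigenspaces in a controlled, essentially bidiagonal fashion. The cleanest route is to argue that $Y$ satisfies the same polynomial $\prod_{i=0}^d (Y-q^{d-2i}I)=0$: one shows that applying $(Y-q^{-d}I)(Y-q^{2-d}I)\cdots(Y-q^{d}I)$ to any $X$-eigenvector yields zero, using Lemma \ref{lem:qW}(i) to track how each factor shifts the $X$-eigenvalue and noting that after $d+1$ shifts one runs off the end of the spectrum. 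This gives that $Y$ is annihilated by a polynomial with distinct roots $\lbrace q^{d-2i}\rbrace_{i=0}^d$, hence $Y$ is diagonalizable with eigenvalues among these scalars; that each eigenvalue actually occurs then follows from a dimension or rank count, or from running the symmetric argument with Lemma \ref{lem:qW}(ii) to produce the reverse chain.

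For the final assertion describing the diagram, I would identify the filtrations attached to $X$ and $Y$. Writing $U_i$ for the $q^{d-2i}$-eigenspace of $X$ and $U'_i$ for the $q^{d-2i}$-eigenspace of $Y$, the content of the diagram is the pair of flag conditions $U_0+\cdots+U_i = U'_0+\cdots+U'_i$ read in the two opposite orders indicated by the arrows. This is exactly what Lemma \ref{lem:qW} delivers: part (i) shows $(Y-q^{2i-d}I)$ carries $U_i$ into $U_{i+1}$, so that $Y$ is \emph{lower-triangular} with respect to the $X$-eigenspace flag (raising the $X$-index), while part (ii) gives the opposite triangularity of $X$ with respect to the $Y$-flag. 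Combining these two triangularity statements forces the common-flag equalities displayed by the two chains of the diagram, with the arrows pointing in the directions dictated by which map raises and which lowers the index.

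The step I expect to be the main obstacle is not any single computation but organizing the bookkeeping of eigenvalue shifts so that the triangularity is recorded with the correct orientation, and confirming that \emph{every} eigenvalue $q^{d-2i}$ is attained by $Y$ (not merely that $Y$'s spectrum is contained in that set). The containment is immediate from the annihilating polynomial, but surjectivity of the spectrum is where one genuinely needs the full force of both parts of Lemma \ref{lem:qW}, together with the fact that $X$ has the maximal number $d+1$ of distinct eigenvalues on a space where the chain construction cannot collapse prematurely.
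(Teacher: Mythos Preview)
Your proposal is correct and is exactly the kind of argument the paper has in mind: the paper's entire proof is the single sentence ``This is a routine consequence of Lemma~\ref{lem:qW},'' and what you have written is a faithful unpacking of that routine. Two small points of exposition: the diagram encodes a \emph{single} flag equality $U'_0+\cdots+U'_i = U_{d-i}+\cdots+U_d$ (one ordering inverted, as your arrows remark correctly anticipates), and the early claim that the chain $u_i$ ``produces a nonzero vector in each eigenspace'' is neither justified nor needed---your annihilating-polynomial argument and the double-triangularity dimension count already do all the work.
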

\begin{proof} This is a routine consequence of Lemma
\ref{lem:qW}.
\end{proof}

\begin{definition}
\label{def:et} \rm An {\it equitable triple on $V$} is 
a $3$-tuple $X,Y,Z$ of invertible elements in ${\rm End}(V)$ such that
\begin{align*}
\frac{qXY-q^{-1} YX}{q-q^{-1} } = I,
\qquad \quad 
\frac{qYZ-q^{-1} ZY}{q-q^{-1} } = I,
\qquad \quad 
\frac{qZX-q^{-1} XZ}{q-q^{-1} } = I.
\end{align*}
\end{definition}
\noindent  The references
\cite{equit}, \cite{fduq}
describe how equitable triples are related to the quantum group $U_q(\mathfrak{sl}_2)$.

\begin{proposition}
\label{thm:xyz}
For each row in the table below, the given 3-tuple $X$, $Y$, $Z$  is an equitable triple on $V$.
\bigskip

\centerline{
\begin{tabular}[t]{c|ccc}
   {\rm example} & $X$ & $Y$ &
   $Z$ 
\\
\hline
$1$  & 
$a A - a^2 K$ & $M^{-1}$ & $K$
\\
$2$ & $a^{-1} A - a^{-2} B$ & $ M^{-1}$ & $B$
\\
$3$ &$a A - a^2 K^\downarrow$ & $ (M^\downarrow)^{-1}$ & $ K^\downarrow$
\\
$4$&$a^{-1} A - a^{-2} B^\downarrow$ & $(M^\downarrow)^{-1}$ & $ B^\downarrow$
\\
$5$ & $K^{-1}$ & $N^{-1} $ & $ a^{-1} A - a^{-2} K^{-1}$
\\
$6$ & $B^{-1}$ & $N^{-1} $ & $ a A - a^{2} B^{-1}$
\\
$7$ & $(K^\downarrow)^{-1}$ & $(N^\downarrow)^{-1} $ & $ a^{-1} A - a^{-2} (K^\downarrow)^{-1}$
\\
$8$ & $(B^\downarrow)^{-1}$ & $(N^\downarrow)^{-1} $ & $ a A - a^{2} (B^\downarrow)^{-1}$
\\
\end{tabular}}
\bigskip
\noindent In the above table we abbreviate
\begin{align}
\label{eq:NM}
&M= \frac{a K - a^{-1} B}{a - a^{-1}}, \qquad \qquad
N= \frac{a^{-1} K^{-1} - a B^{-1}}{a^{-1}-a},
\\
&M^\downarrow = \frac{a K^\downarrow- a^{-1} B^\downarrow}{a - a^{-1}}, \qquad \quad
N^\downarrow = \frac{a^{-1} (K^\downarrow)^{-1} - a (B^\downarrow)^{-1}}{a^{-1}-a}.
\label{eq:NMd}
\end{align}
\end{proposition}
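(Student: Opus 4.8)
The plan is to exploit the symmetries built into the table, so that only row~1 requires a direct computation. First I would record that conjugation by an invertible map preserves the property of being an equitable triple: if $X,Y,Z$ is an equitable triple then so is $H^{-1}XH,H^{-1}YH,H^{-1}ZH$, since $\frac{q(H^{-1}XH)(H^{-1}YH)-q^{-1}(H^{-1}YH)(H^{-1}XH)}{q-q^{-1}}=H^{-1}\bigl(\frac{qXY-q^{-1}YX}{q-q^{-1}}\bigr)H=I$, and similarly for the other two relations. Using Propositions \ref{prop:four} and \ref{prop:fourR} together with (\ref{eq:fatPsi}), (\ref{eq:fatPsi2}) (and the fact that $H$ commutes with $A$), I would check that the map $W\mapsto H^{-1}WH$ sends row~$n$ to row~$n+4$ for $n=1,2,3,4$; for example in row~1 one gets $H^{-1}(aA-a^2K)H=aA-a^2(a^{-1}A-a^{-2}K^{-1})=K^{-1}$, $H^{-1}M^{-1}H=N^{-1}$, and $H^{-1}KH=a^{-1}A-a^{-2}K^{-1}$, which is precisely row~5. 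Hence it suffices to treat rows~1--4.

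Next I would reduce rows~1--4 to row~1 alone by the two inversion symmetries. Replacing $\lbrace V_i\rbrace_{i=0}^d$ by its inversion $\lbrace V_{d-i}\rbrace_{i=0}^d$ implements $a\mapsto a^{-1}$ and $K\leftrightarrow B$, $K^\downarrow\leftrightarrow B^\downarrow$ while fixing $A$; under this substitution $M$ is fixed and row~1 turns into row~2. Replacing $\lbrace V^*_i\rbrace_{i=0}^d$ by $\lbrace V^*_{d-i}\rbrace_{i=0}^d$ implements $K\leftrightarrow K^\downarrow$, $B\leftrightarrow B^\downarrow$ and $M\mapsto M^\downarrow$, sending rows~1,2 to rows~3,4. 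Because the relations (\ref{eq:KA}), (\ref{eq:KB}) and their $\downarrow$ analogues are invariant under these substitutions (the two equations in (\ref{eq:KA}) are interchanged, and (\ref{eq:KB}) is carried to itself), any proof of row~1 resting on (\ref{eq:KA}), (\ref{eq:KB}) transforms verbatim into proofs of rows~2,3,4. Everything thus comes down to row~1.

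For row~1, with $X=aA-a^2K$, $Y=M^{-1}$, $Z=K$, I would verify the three $q$-Weyl relations directly. The $(Z,X)$ relation is immediate from (\ref{eq:KA}): expanding $\frac{qK(aA-a^2K)-q^{-1}(aA-a^2K)K}{q-q^{-1}}$ gives $a\frac{qKA-q^{-1}AK}{q-q^{-1}}-a^2K^2=a(aK^2+a^{-1}I)-a^2K^2=I$. For the two relations involving $Y=M^{-1}$ I would use the elementary observation, obtained by multiplying on the left and right by the invertible map $M$, that $\frac{qM^{-1}K-q^{-1}KM^{-1}}{q-q^{-1}}=I$ is equivalent to $\frac{qKM-q^{-1}MK}{q-q^{-1}}=M^2$, and that $\frac{q(aA-a^2K)M^{-1}-q^{-1}M^{-1}(aA-a^2K)}{q-q^{-1}}=I$ is equivalent to $\frac{qM(aA-a^2K)-q^{-1}(aA-a^2K)M}{q-q^{-1}}=M^2$. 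Substituting $M=\frac{aK-a^{-1}B}{a-a^{-1}}$, expanding, and using (\ref{eq:KA}) to remove the terms containing $A$, each of these two identities collapses to the single relation (\ref{eq:KB}).

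I expect the main obstacle to be this last step: the two $M$-identities unfold into expressions quadratic in $K$ and $B$, and confirming that they are exactly (\ref{eq:KB}) requires careful bookkeeping of the coefficients $\frac{a^{-1}q-aq^{-1}}{q-q^{-1}}$, $\frac{aq-a^{-1}q^{-1}}{q-q^{-1}}$ and of the factors $a-a^{-1}$ appearing in $M^2$. By contrast the conjugation reduction, the symmetry reduction, and the $(Z,X)$ relation are all short and structural, so the real content of the proposition lies in the single computation reducing row~1 to the Bockting-Conrad relation (\ref{eq:KB}).
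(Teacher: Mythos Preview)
Your proof is correct; it differs from the paper's in how it organizes the reductions.

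The paper treats rows 1 and 2 together as a direct computation from (\ref{eq:KA}), (\ref{eq:KB}); it then treats rows 5 and 6 by a parallel direct computation from the reformulated relations (\ref{eq:KAa}), (\ref{eq:KBa}); finally rows 3,4 and 7,8 come from rows 1,2 and 5,6 respectively via the $\lbrace V^*_i\rbrace$-inversion, exactly as you do. So the paper uses \emph{one} symmetry (the $\downarrow$ inversion) and \emph{two} direct computations.

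You instead use \emph{three} symmetries and \emph{one} direct computation: the $\lbrace V_i\rbrace$-inversion to pass from row 1 to row 2, the $\lbrace V^*_i\rbrace$-inversion for rows 3,4, and then conjugation by $H$ (via Propositions \ref{prop:four}, \ref{prop:fourR} and (\ref{eq:fatPsi}), (\ref{eq:fatPsi2})) to obtain rows 5--8 from rows 1--4. This last step is not in the paper's proof, but it is a pleasant use of the machinery the paper has just built; in effect it replaces the appeal to (\ref{eq:KAa}), (\ref{eq:KBa}) by the earlier observation that those relations are $H$-conjugates of (\ref{eq:KA}), (\ref{eq:KB}). Your direct verification of row~1 --- the $(Z,X)$ relation straight from (\ref{eq:KA}), and the two $M^{-1}$-relations rewritten as $\frac{qKM-q^{-1}MK}{q-q^{-1}}=M^2$ and $\frac{qM(aA-a^2K)-q^{-1}(aA-a^2K)M}{q-q^{-1}}=M^2$, each collapsing to (\ref{eq:KB}) after substituting $M=\frac{aK-a^{-1}B}{a-a^{-1}}$ and eliminating $A$ via (\ref{eq:KA}) --- is exactly the kind of ``routine consequence of (\ref{eq:KA}), (\ref{eq:KB})'' the paper asserts without writing out. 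The bookkeeping concern you flag is real but manageable; the coefficients do line up with those in (\ref{eq:KB}).
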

\begin{proof} For examples 1, 2 the $q$-Weyl relations from Definition
\ref{def:et} are a routine consequence of (\ref{eq:KA}), (\ref{eq:KB}). They also follow from
 \cite[Lemmas~11.5, 12.8, Theorem~12.5]{bocktingTer}.
Examples 3, 4 are obtained from examples 1, 2 by replacing the decomposition $\lbrace V^*_i\rbrace_{i=0}^d$ by its inversion
$\lbrace V^*_{d-i}\rbrace_{i=0}^d$.
For examples 5, 6 the $q$-Weyl relations from Definition
\ref{def:et} are a routine consequence of (\ref{eq:KAa}), (\ref{eq:KBa}). They also follow from
 \cite[Lemmas~11.5, 12.16, Theorem~12.14]{bocktingTer}.
Examples 7, 8 are obtained from examples 5, 6 by replacing the decomposition $\lbrace V^*_i\rbrace_{i=0}^d$ by its inversion
$\lbrace V^*_{d-i}\rbrace_{i=0}^d$.
\end{proof}

\section{The main results}

\noindent We continue to discuss the $\mathcal O_q$-module $V$ from Section 2.
In this section we display some diagrams that illustrate how the eight equitable triples from
Proposition \ref{thm:xyz}
are related to 
 the eigenspace decompositions for $A$, $A^*$, $L^{\pm 1}(A^*)$ on $V$.
\medskip

\noindent
In the next result, we compare the diagrams in
Definition \ref{def:splitMap} and
Corollary
\ref{prop:LspiitMap}.
In order to make the comparison, we reflect the diagram
in Corollary \ref{prop:LspiitMap} about the horizonal line segment labelled $A$.

\newpage


\begin{theorem}
\label{thm:Example9} We have 
\begin{center}
\begin{picture}(0,230)
\put(-150,180){\line(2,0){300}}
\put(0,193){$A^*$}


\put(-150,180){\vector(75,-180){37.5}}
\put(-112.5,90){\line(75,-180){37.5}}
\put(-127.5,82){$B$}

\put(-150,180){\vector(225,-180){112.5}}
\put(-37.5,90){\line(225,-180){112.5}} 
\put(-57.5,82){$K$}

\put(150,180){\vector(-225,-180){112.5}}
\put(37.5,90){\line(-225,-180){32.5}}
\put(-75,0){\line(225,180){70}}
\put(42.5,82){$B^\downarrow$}

\put(150,180){\vector(-75,-180){37.5}}
\put(112.5,90){\line(-75,-180){37.5}}
\put(117.5,82){$K^\downarrow$}

\put(-150,-180){\vector(75,180){37.5}}
\put(-112.5,-90){\line(75,180){37.5}}

\put(-150,-180){\vector(225,180){112.5}}
\put(-37.5,-90){\line(225,180){112.5}}


\put(150,-180){\vector(-225,180){112.5}}
\put(37.5,-90){\line(-225,180){32.5}}
\put(-5,-55){\line(-225,180){70}}

\put(150,-180){\vector(-75,180){37.5}}
\put(112.5,-90){\line(-75,180){37.5}}

\put(-127.5,-75){$aA-a^{2}B^{-1}$}

\put(-89.5,-114){$a^{-1}A-a^{-2}K^{-1}$}


\put(14.5,-112){$aA-a^{2}(B^\downarrow)^{-1}$}

\put(50.5,-75){$a^{-1}A-a^{-2}(K^\downarrow)^{-1}$}

\put(-75,0){\line(1,0){150}}
\put(-5,-20){$A$}
\put(-150,-180){\line(1,0){300}}
\put(-15,-200){$L(A^*)$}

\put(-150,-180){\vector(0,1){180}}
\put(-150,0){\line(0,1){180}}
\put(-220,0){$\frac{a^{-1} K^{-1} - aB^{-1}}{a^{-1}-a}$}

\put(150,-180){\vector(0,1){180}}
\put(150,0){\line(0,1){180}}
\put(160,0){$\frac{a^{-1} (K^\downarrow)^{-1}- a(B^\downarrow)^{-1}}{a^{-1}-a}$}

\end{picture}
\end{center}
\end{theorem}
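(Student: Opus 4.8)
The plan is to read the figure as three already-proved sub-pictures glued along the three reference decompositions — the $A$-eigenspaces $\{V_i\}$, the $A^*$-eigenspaces $\{V^*_i\}$, and the $L(A^*)$-eigenspaces $\{V^+_i\}$ — and then to supply the genuinely new content, which is the two vertical edges. First I would obtain the upper trapezoid (top edge $A^*$, bottom edge $A$, slanted edges $B$, $K$, $B^\downarrow$, $K^\downarrow$) verbatim from Definition \ref{def:splitMap} and Lemma \ref{lem:split}, since those four maps are by definition the corresponding maps of the split decompositions relating $\{V_i\}$ and $\{V^*_i\}$. Next I would obtain the lower trapezoid (top edge $A$, bottom edge $L(A^*)$, with its four slanted edges) from Corollary \ref{prop:LspiitMap}, reflected about the line labelled $A$ as the preamble prescribes; the inputs here are that $H$ commutes with $A$ (so the $A$-line is fixed) and that $V^+_i = H^{-1}V^*_i$ (Definition \ref{def:shortnot}), making the lower trapezoid the $H^{-1}$-image of the upper one.

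The crux is the vertical edges $N = \frac{a^{-1}K^{-1}-aB^{-1}}{a^{-1}-a}$ (left) and $N^\downarrow$ (right), which join the $A^*$-line directly to the $L(A^*)$-line. I would first invoke Lemma \ref{lem:fatInv} so that $N$, $N^\downarrow$ are bona fide corresponding maps, diagonalizable with spectrum $\{q^{d-2i}\}_{i=0}^d$. The decisive structural fact is that each vertical edge, taken together with the upper and lower slanted edges sharing its two endpoints on the $A$-line, bounds a triangle that is exactly one of the equitable triples of Proposition \ref{thm:xyz}: the left edge $N$ is the middle entry (appearing as $N^{-1}$) of Examples 5 and 6, and the right edge $N^\downarrow$ plays the analogous role in Examples 7 and 8. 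Applying Corollary \ref{cor:XY} to each $q$-Weyl pair inside these triples converts the algebraic relations into the flag coincidences drawn as straight edges, and the conjugation identities $H^{-1}MH = N$, $H^{-1}M^\downarrow H = N^\downarrow$ of (\ref{eq:fatPsi}), (\ref{eq:fatPsi2}) — via Proposition \ref{prop:L} — are what pin these edges between the $A^*$ and $L(A^*)$ lines. Concretely, since the eigenspaces of $N$ are the $H^{-1}$-images of those of $M$, I would verify the two endpoint flag-coincidences that the vertical edge encodes: one with the $L(A^*)$-decomposition at the lower endpoint and one with the $A^*$-decomposition at the upper endpoint.

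The hard part will be bookkeeping rather than any single computation: I must confirm that the edges fit together coherently — that the triangles close up and that the flag-inclusion conditions holding at each shared vertex are simultaneously compatible for the upper split decompositions, their $H^{-1}$-images below, and the equitable-triple vertical edges. I would discharge this by examining the two vertices where four edges meet (the ends of the $A$-line) and checking that the partial sums forced by Lemma \ref{lem:split}, by its $H^{-1}$-image, and by Corollary \ref{cor:XY} agree there; once the flags match, the orientation of every arrow — hence whether an edge is labelled by a map or its inverse — is forced by the convention that a corresponding map acts as $q^{d-2i}$ on the $i$-th subspace.
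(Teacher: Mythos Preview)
Your proposal is correct and follows essentially the same route as the paper. The paper's proof is briefer: it takes the upper and lower trapezoids as already established by Definition~\ref{def:splitMap} and Corollary~\ref{prop:LspiitMap}, and then verifies only the attachment of the vertical edge $N$ (and by symmetry $N^\downarrow$) using Lemma~\ref{lem:fatInv}, the two $q$-Weyl relations from Example~5 of Proposition~\ref{thm:xyz} (namely $K^{-1},N^{-1}$ and $N^{-1},\,a^{-1}A-a^{-2}K^{-1}$), and Corollary~\ref{cor:XY}. Your invocation of Examples~5 and~6 together, and of the conjugation identities \eqref{eq:fatPsi}, \eqref{eq:fatPsi2}, is consistent but more than the paper actually uses---one $q$-Weyl pair at each endpoint of the vertical edge suffices, since the other slanted edges meeting there are already pinned by the trapezoids.
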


\vspace{7.5cm}
\begin{proof} Recall the abbreviations $N$, $N^\downarrow$ from
(\ref{eq:NM}), 
(\ref{eq:NMd}). 
 The diagram in the theorem statement is making some assertions about $N$ and $N^\downarrow$. We now verify these assertions, starting with $N$.
 By Lemma
 \ref{lem:fatInv}, $N$ is diagonalizable with eigenvalues $\lbrace q^{d-2i}\rbrace_{i=0}^d$.
 By Proposition
 \ref{thm:xyz} one finds  that the $q$-Weyl relation is satisfied by the pair $K^{-1}, N^{-1}$ and also the pair $N^{-1}$, $a^{-1}A-a^{-2}K^{-1}$.
  In the diagram of the theorem statement,
 the vertical line segment on the left represents
 the $N$-eigenspace decomposition of $V$. This line segment is properly attached in the diagram, due to
 Corollary \ref{cor:XY} 
 and our above findings.
 We have verified the assertions about $N$. The assertions about $N^\downarrow$ are verified in a similar manner.
\end{proof}

\newpage

\noindent  The following theorem contains a diagram. This diagram is obtained from the diagram in Theorem \ref{thm:Example9} by inverting the orientation for some of the edges.
\begin{theorem} 
\label{thm:Example6}  
For each oriented 3-cycle in the diagram below, the corresponding maps form an equitable triple.

\begin{center}
\begin{picture}(0,230)
\put(-150,180){\line(2,0){300}}
\put(0,193){$A^*$}

\put(-75,0){\vector(-75,180){37.5}}
\put(-112.5,90){\line(-75,180){37.5}}

\put(75,0){\vector(-225,180){112.5}}
\put(-37.5,90){\line(-225,180){112.5}}

\put(-75,0){\line(225,180){70}}
\put(5,64){\vector(225,180){37.5}}
\put(37.5,90){\line(225,180){112.5}}

\put(75,0){\vector(75,180){37.5}}
\put(112.5,90){\line(75,180){37.5}}


\put(-150,-180){\vector(75,180){37.5}}
\put(-112.5,-90){\line(75,180){37.5}}

\put(-150,-180){\vector(225,180){112.5}}
\put(-37.5,-90){\line(225,180){112.5}}


\put(150,-180){\vector(-225,180){112.5}}
\put(37.5,-90){\line(-225,180){32.5}}
\put(-5,-55){\line(-225,180){70}}

\put(150,-180){\vector(-75,180){37.5}}
\put(112.5,-90){\line(-75,180){37.5}}

\put(-75,0){\line(1,0){150}}
\put(-5,-20){$A$}
\put(-150,-180){\line(1,0){300}}
\put(-15,-200){$L(A^*)$}

\put(-150,180){\vector(0,-1){180}}
\put(-150,0){\line(0,-1){180}}

\put(150,180){\vector(0,-1){180}}
\put(150,0){\line(0,-1){180}}

\end{picture}
\end{center}
\vspace{7.5cm}

\end{theorem}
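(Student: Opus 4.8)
The plan is to treat the diagram as a bookkeeping device for the equitable triples already obtained in Proposition~\ref{thm:xyz}, so that no new computation with the relations is needed; the work lies in reading the picture correctly. First I would record how the present diagram differs from the one in Theorem~\ref{thm:Example9}. Comparing the two, the arrows on the four upper edges $B$, $K$, $B^\downarrow$, $K^\downarrow$ and on the two vertical edges $N$, $N^\downarrow$ have been reversed, whereas the four lower edges $aA-a^2B^{-1}$, $a^{-1}A-a^{-2}K^{-1}$, $aA-a^2(B^\downarrow)^{-1}$, $a^{-1}A-a^{-2}(K^\downarrow)^{-1}$ retain their orientations. I would then invoke the lemma of Section~5 stating that the map corresponding to a decomposition and the map corresponding to its inversion are mutually inverse: reversing the arrow on an edge replaces the map read along that edge by its inverse. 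Hence, reading along the arrows exactly as drawn here, the six reversed edges now carry $B^{-1}$, $K^{-1}$, $(B^\downarrow)^{-1}$, $(K^\downarrow)^{-1}$, $N^{-1}$, $(N^\downarrow)^{-1}$, while each lower edge still carries the label attached to it in Theorem~\ref{thm:Example9}.

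Next I would enumerate the directed $3$-cycles. After the reversals, every arrow runs either from the middle level (carrying $A$) up to the top level (carrying $A^*$), from the top level down to the bottom level (carrying $L(A^*)$), or from the bottom level back up to the middle level; so the arrows circulate in a single rotational sense, and a directed $3$-cycle must use exactly one edge of each of these three kinds, touching one vertex at each level. Inspecting the picture, the choice of the upper edge to be $B$, $K$, $B^\downarrow$, or $K^\downarrow$ forces the remaining two edges of the triangle, and these four choices exhaust the directed $3$-cycles.

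For each cycle I would then read the three maps in the direction of circulation and compare with the rows of Proposition~\ref{thm:xyz}. For example, the cycle whose upper edge is $B$ is the triangle with edges $B$, $N$, $aA-a^2B^{-1}$, and reading along the arrows produces the ordered triple $B^{-1}$, $N^{-1}$, $aA-a^2B^{-1}$, which is exactly example~6. In the same way the cycles through $K$, $K^\downarrow$, and $B^\downarrow$ produce examples~5, 7, and 8, respectively. Since Proposition~\ref{thm:xyz} already guarantees that examples~5--8 are equitable triples on $V$, it follows that each directed $3$-cycle of the diagram carries an equitable triple, which is the assertion of the theorem.

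The one delicate point, and what I take to be the crux of the argument, is the cyclic order. The $q$-Weyl relation $\frac{qXY-q^{-1}YX}{q-q^{-1}}=I$ is not symmetric in $X$ and $Y$, so traversing a triangle in the wrong rotational sense would not match any row of the table. I would therefore verify that circulating each cycle in the sense forced by the reversed arrows reproduces the three maps in the same cyclic order $X$, $Y$, $Z$ (up to a cyclic rotation, which preserves the equitable property) as the matching row, rather than in the reversed order. This is precisely the effect that the particular choice of six reversed edges is designed to produce; granting it, the rest is the routine matching described above.
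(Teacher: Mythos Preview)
Your proposal is correct and follows essentially the same approach as the paper: the paper's proof simply states that the diagram contains four oriented $3$-cycles and that for each one the corresponding maps form an equitable triple by examples 5--8 of Proposition~\ref{thm:xyz}. Your write-up supplies the bookkeeping details (which edges are reversed relative to Theorem~\ref{thm:Example9}, how that inverts the attached maps, and why the cyclic order matches the table), but the logical content is the same.
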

\begin{proof} The above diagram contains four oriented 3-cycles. For each one, the corresponding maps form an equitable triple
by examples 5--8 in the table of  Proposition \ref{thm:xyz}. 
\end{proof}

\newpage
\noindent
In the next result, we compare the diagrams in
Definition \ref{def:splitMap} and
Corollary
\ref{prop:LspiitMapR}.
In order to make the comparison, we reflect the diagram
in Corollary \ref{prop:LspiitMapR} about the horizonal line segment labelled $A$.

\begin{theorem}
\label{thm:Example8}
We have

\begin{center}
\begin{picture}(0,230)
\put(-150,180){\line(2,0){300}}
\put(0,193){$A^*$}


\put(-150,180){\vector(75,-180){37.5}}
\put(-112.5,90){\line(75,-180){37.5}}
\put(-127.5,82){$B$}

\put(-150,180){\vector(225,-180){112.5}}
\put(-37.5,90){\line(225,-180){112.5}} 
\put(-57.5,82){$K$}

\put(150,180){\vector(-225,-180){112.5}}
\put(37.5,90){\line(-225,-180){32.5}}
\put(-75,0){\line(225,180){70}}
\put(42.5,82){$B^\downarrow$}

\put(150,180){\vector(-75,-180){37.5}}
\put(112.5,90){\line(-75,-180){37.5}}
\put(117.5,82){$K^\downarrow$}


\put(-75,0){\vector(-75,-180){37.5}}
\put(-112.5,-90){\line(-75,-180){37.5}}
\put(-140.5,-80){$a^{-1}A-a^{-2}B$}

\put(75,0){\vector(-225,-180){112.5}}
\put(-37.5,-90){\line(-225,-180){112.5}}
\put(-71.5,-102){$aA-a^{2}K$}


\put(-75,0){\line(225,-180){70}}
\put(5,-64){\vector(225,-180){32.5}}
\put(37.5,-90){\line(225,-180){112.5}}
\put(15.5,-102){$a^{-1}A-a^{-2}B^\downarrow$}

\put(75,0){\vector(75,-180){37.5}}
\put(112.5,-90){\line(75,-180){37.5}}
\put(84.5,-80){$aA-a^{2}K^\downarrow$}

\put(-75,0){\line(1,0){150}}
\put(-5,-20){$A$}
\put(-150,-180){\line(1,0){300}}
\put(-20,-200){$L^{-1}(A^*)$}

\put(-150,180){\vector(0,-1){180}}
\put(-150,0){\line(0,-1){180}}
\put(-205,0){$\frac{a K - a^{-1}B}{a-a^{-1}}$}

\put(150,180){\vector(0,-1){180}}
\put(150,0){\line(0,-1){180}}
\put(160,0){$\frac{a K^\downarrow - a^{-1}B^\downarrow}{a-a^{-1}}$}
\end{picture}
\end{center}
\end{theorem}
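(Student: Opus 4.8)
The statement of Theorem~\ref{thm:Example8} is a diagram that makes several assertions, and the strategy is to verify each assertion separately, exactly in the spirit of the proof of Theorem~\ref{thm:Example9}. The diagram consists of two horizontal lines (the top labelled $A^*$, the bottom labelled $L^{-1}(A^*)$) together with two vertical line segments labelled by the maps $M=(aK-a^{-1}B)(a-a^{-1})^{-1}$ on the left and $M^\downarrow=(aK^\downarrow-a^{-1}B^\downarrow)(a-a^{-1})^{-1}$ on the right, and various slanted edges labelled by $K$, $B$, $K^\downarrow$, $B^\downarrow$ (in the upper half) and by the conjugated maps $a^{-1}A-a^{-2}B$, $aA-a^2K$, $a^{-1}A-a^{-2}B^\downarrow$, $aA-a^2K^\downarrow$ (in the lower half). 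The plan is: (i) reproduce the upper half from Definition~\ref{def:splitMap} and Lemma~\ref{lem:split} (already established); (ii) reproduce the lower half from Corollary~\ref{prop:LspiitMapR}, reflected about the horizontal $A$-line as the preamble instructs; (iii) verify that the two new vertical segments labelled $M$ and $M^\downarrow$ are correctly attached.

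First I would dispatch the upper and lower halves. The upper half is verbatim the diagram of Definition~\ref{def:splitMap}, so nothing new is needed there. The lower half records how $A$ and $L^{-1}(A^*)$ are related, together with the corresponding maps from Corollary~\ref{prop:LspiitMapR}; by Lemma~\ref{lem:LsplitR} and Corollary~\ref{prop:LspiitMapR} these are exactly $a^{-1}A-a^{-2}B$, $aA-a^2K$, $a^{-1}A-a^{-2}B^\downarrow$, $aA-a^2K^\downarrow$, and reflecting the diagram about the $A$-line (as was done for Theorem~\ref{thm:Example9}) arranges them into the positions shown. So the only genuine content is the two vertical segments.

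The crux is therefore step (iii): showing that $M$ is correctly attached on the left, i.e.\ that the vertical $M$-segment meets the top line (the $K,B$ decompositions of $A^*$-type) and the bottom line (the $aA-a^2K$, $a^{-1}A-a^{-2}B$ decompositions) as drawn. Following the template of Theorem~\ref{thm:Example9}, I would argue via Corollary~\ref{cor:XY}: by Lemma~\ref{lem:fatInv} the map $M$ is diagonalizable with eigenvalues $\lbrace q^{d-2i}\rbrace_{i=0}^d$, and by the appropriate rows of the table in Proposition~\ref{thm:xyz} (examples~1 and~2, which feature $M^{-1}$ paired with $K$ and with $aA-a^2K$, and with $B$ and with $a^{-1}A-a^{-2}B$) the $q$-Weyl relation holds for the relevant pairs. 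Invoking Corollary~\ref{cor:XY} for each such pair then forces the $M$-eigenspace decomposition to sit in the diagram precisely as the left vertical segment indicates. The argument for $M^\downarrow$ on the right is identical, using examples~3 and~4 of Proposition~\ref{thm:xyz} together with Lemma~\ref{lem:fatInv}.

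\textbf{Main obstacle.} There is no serious computational obstacle, since Proposition~\ref{thm:xyz} already packages the $q$-Weyl relations and Lemma~\ref{lem:fatInv} already supplies the eigenvalue data. The only delicate point is bookkeeping: I must match each slanted or vertical edge of the diagram to the correct pair appearing in Proposition~\ref{thm:xyz} and to the correct endpoint decomposition, so that Corollary~\ref{cor:XY} attaches the segment with the right orientation and between the right two lines. In other words, the difficulty is purely one of reading the diagram correctly and pairing it with the right row of the table, rather than proving anything new.
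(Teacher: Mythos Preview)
Your proposal is correct and follows essentially the same approach as the paper's own proof: the paper likewise treats the upper and lower halves as already established (via Definition~\ref{def:splitMap} and Corollary~\ref{prop:LspiitMapR}) and concentrates on the vertical segments, using Lemma~\ref{lem:fatInv} for the diagonalizability of $M$, the $q$-Weyl relations from Proposition~\ref{thm:xyz}, and Corollary~\ref{cor:XY} to conclude the segment is properly attached, then repeats for $M^\downarrow$. The only cosmetic difference is that the paper cites just the two pairs $(aA-a^2K,\,M^{-1})$ and $(M^{-1},\,K)$ from example~1, whereas you also mention example~2; either suffices, since a single $q$-Weyl pair at each endpoint already pins down the attachment.
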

\vspace{7.5cm}
\begin{proof} Recall the abbreviations $M$, $M^\downarrow$ from
(\ref{eq:NM}), 
(\ref{eq:NMd}). 
 The diagram in the theorem statement is making some assertions about $M$ and $M^\downarrow$. We now verify these assertions, starting with $M$.
 By Lemma
 \ref{lem:fatInv}, $M$ is diagonalizable with eigenvalues $\lbrace q^{d-2i}\rbrace_{i=0}^d$.
 By Proposition
 \ref{thm:xyz} one finds  that the $q$-Weyl relation is satisfied by  the pair $a A-a^{2}K$, $M^{-1}$ and also the pair $M^{-1}$, $K$.
  In the diagram of the theorem statement,
 the vertical line segment on the left represents
 the $M$-eigenspace decomposition of $V$. This line segment is properly attached in the diagram, due to
 Corollary \ref{cor:XY} 
 and our above findings.
 We have verified the assertions about $M$. The assertions about $M^\downarrow$ are verified in a similar manner.
\end{proof}

\newpage
\noindent  The following theorem contains a diagram. This diagram is obtained from the diagram in 
Theorem \ref{thm:Example8}
 by inverting the orientation for some of the edges.

\begin{theorem} 
\label{thm:Example7}
For each oriented 3-cycle in the diagram below, the corresponding maps form an equitable triple.
\begin{center}
\begin{picture}(0,230)
\put(-150,180){\line(2,0){300}}
\put(0,193){$A^*$}


\put(-150,180){\vector(75,-180){37.5}}
\put(-112.5,90){\line(75,-180){37.5}}

\put(-150,180){\vector(225,-180){112.5}}
\put(-37.5,90){\line(225,-180){112.5}} 

\put(150,180){\vector(-225,-180){112.5}}
\put(37.5,90){\line(-225,-180){32.5}}
\put(-75,0){\line(225,180){70}}

\put(150,180){\vector(-75,-180){37.5}}
\put(112.5,90){\line(-75,-180){37.5}}


\put(-75,0){\vector(-75,-180){37.5}}
\put(-112.5,-90){\line(-75,-180){37.5}}

\put(75,0){\vector(-225,-180){112.5}}
\put(-37.5,-90){\line(-225,-180){112.5}}


\put(-75,0){\line(225,-180){70}}
\put(5,-64){\vector(225,-180){32.5}}
\put(37.5,-90){\line(225,-180){112.5}}

\put(75,0){\vector(75,-180){37.5}}
\put(112.5,-90){\line(75,-180){37.5}}

\put(-75,0){\line(1,0){150}}
\put(-5,-20){$A$}
\put(-150,-180){\line(1,0){300}}
\put(-20,-200){$L^{-1}(A^*)$}

\put(-150,-180){\vector(0,1){180}}
\put(-150,0){\line(0,1){180}}

\put(150,-180){\vector(0,1){180}}
\put(150,0){\line(0,1){180}}

\end{picture}
\end{center}
\end{theorem}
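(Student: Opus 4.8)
The plan is to follow the strategy used in the proof of Theorem~\ref{thm:Example6}. The diagram in the present theorem is obtained from the diagram in Theorem~\ref{thm:Example8} by reversing the orientation of certain edges, in such a way that each bounded region of the diagram becomes an oriented $3$-cycle. My first step would be to observe that the diagram contains exactly four oriented $3$-cycles.

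For each such $3$-cycle I would read off the three maps labeling its edges and match them against the equitable triples $X,Y,Z$ listed in examples $1$--$4$ of Proposition~\ref{thm:xyz}. Recall from the diagram of Theorem~\ref{thm:Example8} that the left vertical edge carries $M=(aK-a^{-1}B)/(a-a^{-1})$, the right vertical edge carries $M^\downarrow$, the slanted edges near the $A^*$ line carry $K$, $B$, $K^\downarrow$, $B^\downarrow$, and the slanted edges near the $L^{-1}(A^*)$ line carry $aA-a^2K$, $a^{-1}A-a^{-2}B$, $aA-a^2K^\downarrow$, $a^{-1}A-a^{-2}B^\downarrow$. After reversing orientations appropriately, the four $3$-cycles should carry precisely the triples of examples $1$--$4$, with the understanding that traversing the vertical edge in the reversed direction corresponds to replacing $M$ by $M^{-1}$ (respectively $M^\downarrow$ by $(M^\downarrow)^{-1}$).

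The one point I would check most carefully is the bookkeeping of edge orientations: an edge traversed in a given direction represents a certain map, while the reverse traversal represents its inverse. I would verify that, with the orientations shown, each $3$-cycle is traversed so that its three maps appear, up to cyclic rotation, in the order $X,Y,Z$ of Definition~\ref{def:et}. Since every individual $q$-Weyl relation needed was already established in the proof of Proposition~\ref{thm:xyz} for examples $1$--$4$, once the orientations are confirmed the three $q$-Weyl relations around each $3$-cycle hold simultaneously, and the result follows. I expect no genuine difficulty here beyond this orientation check; the theorem is a diagrammatic repackaging of examples $1$--$4$ of Proposition~\ref{thm:xyz}, exactly as Theorem~\ref{thm:Example6} repackages examples $5$--$8$.
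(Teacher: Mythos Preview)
Your proposal is correct and follows essentially the same approach as the paper's own proof: the paper simply notes that the diagram contains four oriented $3$-cycles and that, for each one, the corresponding maps form an equitable triple by examples $1$--$4$ in the table of Proposition~\ref{thm:xyz}. Your additional remarks about orientation bookkeeping are reasonable caution but do not add anything substantively new.
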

\vspace{7.5cm}

\begin{proof} The above diagram contains four oriented 3-cycles. For each one, the corresponding maps form an equitable triple
by examples 1--4 in the table of Proposition \ref{thm:xyz}. 
\end{proof}
\section{Acknowledgement} The author thanks Pascal Baseilhac and Edward Hanson, for giving the paper a close reading and offering valuable comments.
The author thanks Kazumasa Nomura, for explaining how to create the diagrams and double checking the equations of the
paper by computer.
\newpage

\bigskip

\noindent Paul Terwilliger \hfil\break
\noindent Department of Mathematics \hfil\break
\noindent University of Wisconsin \hfil\break
\noindent 480 Lincoln Drive \hfil\break
\noindent Madison, WI 53706-1388 USA \hfil\break
\noindent email: {\tt terwilli@math.wisc.edu }\hfil\break

\end{document}